\newcommand{\ass}{\quad\mbox{as}\quad}
\newcommand{\inn}{{\quad\hbox{in } }}
\newcommand{\ttt}{\tilde }
\newcommand{\LL}{{\tt L}  }
\newcommand{\TT}{{\mathcal T}  }
\newcommand{\nn}{ {\nabla}  }
\newcommand{\pp}{ {\partial} }
\newcommand{\vp}{\varphi}
\newcommand{\RR}{{{\mathbb R}}}
\newcommand{\GG}{ {\mathcal G}}
\newcommand{\FF}{ {\mathcal F}}
\newcommand{\N}{\mathbb{N}}
\newcommand{\C}{{\mathbb C}}
\newcommand{\R} {\mathbb R}
\newcommand{\Z} {\mathbb Z}
\newcommand{\cuad}{{\sqcap\kern-.68em\sqcup}}
\newcommand{\ve}{\varepsilon}
\newcommand{\be}{\begin{equation}}
	\newcommand{\ee}{\end{equation}}
\newcommand{\equ}[1]{(\ref{#1})}
\newcommand{\curl}{\mathop{\rm curl}}
\newtheorem{lemma}{Lemma}[section]
\newtheorem{prop}{Proposition}[section]
\newtheorem{theorem}{Theorem}
\newtheorem{remark}{Remark}[section]
\newcommand{\bremark}{\begin{remark} \em}
	\newcommand{\eremark}{\end{remark} }
\numberwithin{equation}{section}
\begin{document}

	\title[Nearly parallel  helical filaments for 3d Euler]{Nearly parallel helical vortex filaments  in the three-dimensional Euler equations }

	\author[I. Guerra]{Ignacio Guerra*}
	\address{\noindent I.G.: Universidad de Santiago de Chile (USACH) \\ 
		Facultad de Ciencia, Departamento de Matem\'atica y Ciencia de la Computaci\'on \\ 
		Las Sophoras 173, Estaci\'on Central, Santiago, Chile.}

	\email{ignacio.guerra@usach.cl}
	\thanks{* Corresponding Author}
	\author[M. Musso]{Monica Musso}
	\address{\noindent M.M.:  Department of Mathematical Sciences University of Bath,
		Bath BA2 7AY, United Kingdom.}
	\email{m.musso@bath.ac.uk}

	\begin{abstract}
		Klein, Majda, and Damodaran have previously developed a formalized asymptotic motion law describing the evolution of nearly parallel vortex filaments within the framework of the three-dimensional Euler equations for incompressible fluids. In this study, we rigorously justify this model for two  configurations: the central configuration consisting of  regular polygons of $N$ helical-filaments rotating with constant speed, and the central configurations of $N+1$ vortex filaments, where an $N$-polygonal central configuration surrounds a central straight filament.
		
	\end{abstract}
	\subjclass[2020]{35Q31, 76B47, 35J61}
	\keywords{helical vortex filaments, three-dimensional Euler equations, gluing method}
	\maketitle

	\section{Introduction}

	An ideal, incompressible, homogeneous fluid with constant density is described by the Euler equations:
	\be \label{euler0}
	\begin{aligned}
		{\textbf u}_t  + ({\textbf u}\cdot \nn ){\textbf u} &= - \nn p  &&\inn \R^3 \times [0,T) , \\
		{\textrm{div}}\, {\textbf u} &= 0  && \inn \R^3\times [0,T)  .
	\end{aligned}
	\ee
	Here $ {\textbf u} \, : \,  \R^3 \times [0, T ) \to \R^3$ is the velocity field and $p \, : \, \R^3 \times [0, T ) \to \R$
	is the pressure. 
	For a solution ${\textbf u}$ of the Euler equations \equ{euler0}, the vorticity is defined as $ \vec{\omega} = \nabla \times {\textbf u} $. Consequently, $\vec{\omega}$ satisfies the Euler equations in the \emph{vorticity form} 
	\be \label{euler}
	\begin{aligned}
		\vec \omega_t  +
		({\textbf u}\cdot \nn ){\vec \omega}
		&=( \vec \omega \cdot \nn ) {\textbf u}  &&  \inn \R^3\times (0,T), \\
		\quad {\textbf u}  = \curl \vec \psi,\ &
		-\Delta \vec \psi =  \vec \omega  && \inn \R^3\times (0,T).
	\end{aligned}
	\ee
	Vortex filaments 
	are solutions of the Euler equations whose vorticity is concentrated in a small tube near an evolving imaginary smooth curve.
	The mathematical study of the evolution of vortex filaments within the context of classical fluid equations is a challenging problem that traces back to the nineteenth century with the work of Kelvin and Helmholtz. Simplified asymptotic equations have been considered as potential models for describing the asymptotic behavior of small vortex cores.
	Da Rios  \cite{darios}-\cite{levicivita1908}  found that, if  the vorticity
	concentrates smoothly and symmetrically in a small tube of size $\ve >0$ around one smooth curve  parametrized as  $ \gamma(s,t)$ where $s$ designates its arclength parameter, then $\gamma(s,t)$ asymptotically obeys a law of the form
	\[
	\gamma_t =  2\,  |\log\ve| \,   (\gamma_s\times \gamma_{ss})
	\]
	as $\ve \to 0$, or scaling $t= |\log\ve|^{-1}\tau $,
	\be\label{bin1}
	\gamma_\tau =  2\, \kappa \, (\gamma_s\times \gamma_{ss})  .
	\ee
	Here, $\kappa$ is a constant related to the {\em circulation} of the velocity field along the boundary of cross-sections of the filament. See \cite{ricca,majda-bertozzi,jerrard-seis} for a complete discussion on this topic.
	
	\medskip
	Problem \eqref{bin1} has some trivial solutions: the stationary vortex line, the uniformly translating circle (vortex ring), and the translating-rotating helix (helical filament). The "vortex filament conjecture" \cite{bcp,jerrard-seis,jerrard-smets} posits that vorticity initially concentrated around a curve remains close to a curve evolving according to \eqref{bin1}  to leading order for a certain period of time. Jerrard and Seis \cite{jerrard-seis} provided a rigorous derivation of \eqref{bin1} assuming the vorticity remains concentrated around the curve.

	Fraenkel \cite{fraenkel} demonstrated that, in the special case of vortex rings, there exists a family of solutions to \eqref{euler} where the vorticity remains concentrated on a curve solution of \eqref{bin1} for all time. See also \cite{fraenkel-berger, norbury}. Using elliptic singular perturbation techniques,  in \cite{ddmw2} it is proven that there exist solutions  concentrating into one or several polygonally distributed rotating-translating helical filaments, at a certain fixed distance one from the other.  Subsequent constructions can be found  in \cite{cao1, cao2, guerra-musso, jln}.  The dynamics of helical filaments starting from generic initial data have been analyzed in \cite{dlm}. The Biot-Savart law chosen in the above results  gives that the velocity field of the solutions has zero swirl. Global well-posedness and long time behaviour of solutions with helical symmetry whose velocity field does not have vanishing swirl have been obtained in \cite{guo-zhao1, guo-zhao2}.

	\medskip
	In \cite{klein-majda-damodaran}, Klein, Majda and Damodaran have formally derived a
	simplified asymptotic  law of motion for the evolution of several, nearly parallel vortex
	filaments in the context of the  Euler equation \eqref{euler}. 
	It governs 
	the motion of $N \geq 2$ filaments and takes into consideration the pairwise
	interaction between the filaments along with an approximation for
	motion by self-induction. The model is 
	\be \label{KMD1}
	\partial_t X_j = i \, \alpha_j \, \kappa_j \partial_s^2 X_j + 2 i \sum_{k\not= j} \kappa_k {\hspace{-0.5em} X_j - X_k \over |X_j - X_k|^2} , \quad j=1, \ldots , N
	\ee
	where $X_j = X_j (s,t) \in \R^2$ and $s$ is the third coordinate in $\R^3$. The number $\kappa_j$ is related to the circulation strength of the $j$-th filament, while $\alpha_j$ is a constant determined by its vortex core structure. This model is anticipated to be legitimate when the filaments are assumed to have small core section of size $\ve >0$, an amount of deformation of the filaments with respect to
	perfectly straight lines and separation distance between distinct
	filaments with size of the order ${1\over \sqrt{|\log \ve |}}$. (See \cite{klein-majda-damodaran}.)

	\medskip
	System \eqref{KMD1} has been studied for its own (see e.g. \cite{LM,KPV,BM,BFM}), in particular its well-posedness and the possibility of
	colliding filaments. Nevertheless,  the justification
	of the model itself as a limit from a classical fluid mechanics model such as the
	Euler equation has so
	far only been obtained formally through matched asymptotic. The purpose of our paper is to provide this justification for two special configurations satisfying \eqref{KMD1}.

	\medskip
	We start with the observation that, if  $\alpha_j \, \kappa_j = \kappa_0$ for all 
	$j=1, \ldots , N$, then
	the system \eqref{KMD1} is Galilean invariant. This means that if $(X_1, \ldots , X_N)$ is a solution to \eqref{KMD1}, then
	$$
	\tilde X_j (s,t)= e^{-i \kappa_0 \nu^2 t } \, e^{i s \nu } X_j (s\, -\, 2 \, \kappa_0 \,  \nu \, t, t), \quad j=1, \ldots , N
	$$
	is also a solution to \eqref{KMD1} for any $\nu \in \R$, see for instance \cite{KPV}.
	
	Moreover, if $\kappa_j = \kappa$ for all $j=1, \ldots , N$, \eqref{KMD1} has a solution consisting of $N$ points at the vertices of a regular $N$-polygon of radius $r$ rotating with constant speed
	$$
	Z_j (t) = r \, e^{i \, \kappa \,  {N - 1 \over r^2} \, t} \, e^{{2\pi \over N} (j-1) \, i} .
	$$
	This solution corresponds to a central configuration of exactly parallel vortex filaments given by a family of
	straight and exactly parallel lines $(Z_j (t), s)$, see \cite{craig1,craig2}. Normalizing all constants $\alpha_j$ to be equal to $1$, and using 
	the Galilean invariance \eqref{KMD1} it follows that for any $r>0$ and any $\nu \in \R$
	$$
	\tilde Z_j (s,t) = r\,  e^{- i \, \kappa \,  ( \nu^2 - {N-1 \over r^2} )\, t } \, e^{i s \nu} \, e^{{2\pi \over N} (j-1) \, i} 
	$$
	is also a solution to \eqref{KMD1}. A scaling in time allows us to fix the value of $\kappa$, say $\kappa =2$.
	
	Hence the filaments have the form of an $N$-helix
	\be \label{helices}
	\gamma_j (s,t) = \left( \begin{matrix} r  \, e^{-i \, 2 \, ( \nu^2 - {N-1 \over r^2}) \, t }  \, e^{i \, s \, \nu} \, e^{{2\pi \over N} (j-1) \, i} \\
		s \end{matrix} \right) \quad j=1, \ldots , N.
	\ee
	
	These are circular helices of radius $r$. Let $h \in \R$ be such that the  pitch of these helices is $2 \, \pi \, h$, namely
	$$
	\gamma_j (s + 2 \pi h) = \gamma_j (s) + \left( \begin{matrix} 0 \\
		0 \\
		2 \pi h \end{matrix} \right).$$
	Then necessarily  $\nu \, h =1$.

	Observe that for $\nu = \nu_*$
	with
	$
	\nu_*^2 ={N-1 \over r^2},
	$
	we obtain a stationary solution of \eqref{KMD1}  in the form of an $N$-helix
	\begin{equation}\label{stat}
		\gamma_j (s) = \left( \begin{matrix} r  \, e^{i s \nu_*} \, e^{{2\pi \over N} (j-1) \, i} \\
			s \end{matrix} \right) \quad j=1, \ldots , N.
	\end{equation}
	If $\nu =0$, we obtain a solution of \eqref{KMD1} constituted by $N$ straight lines.

	\medskip
	The goal of this paper is to construct a family of  solutions to \eqref{euler} whose vortex-set is close, as $\ve $ tends
	to zero, to rescaled versions of the helices \eqref{helices}. 
	The solutions we construct are invariant under helical symmetry, in the sense that
	$$
	\vec \omega (\, S_{-\rho} \, x , t) = R_\rho \, \vec \omega \, (x,t) \quad \forall \rho $$
	where
	$$
	R_\rho = \left( \begin{matrix} \cos \rho & -\sin \rho &0 \\
		\sin \rho & \cos \rho &0 \\
		0&0&1
	\end{matrix} \right),  \quad S_\rho = R_\rho + \left( \begin{matrix} 0 \\
		0 \\
		h \rho\end{matrix} \right).
	$$
	The associated velocity field ${\bf u}$ is orthogonal to the symmetry lines of the action generated by $S_\rho$. The vorticity rotates at a constant speed, which is determined by the radius of the helices and their pitch, along with lower-order terms as $\ve \to 0$.
	
	\medskip
	\medskip
	
	We prove the following result.
	
	\begin{theorem}\label{teo2} 
		Let
		$$
		r>0, \quad h \not=0
		$$   
		be given.
		Let $\gamma_j$ be the helices parametrized by \eqref{helices} with $\nu= {1\over h}$, for $j=1,\ldots,N$. Then
		there exist $\alpha_\ve >0$ and  a smooth  solution $\vec \omega_\ve (x,t)$ to $\equ{euler}$ such that
		\[
		\vec\omega_\ve (x,t) -  8 \, \pi \, \sum_{j=1}^N  \delta_{\gamma_j^\ve }{\textbf t}_{\gamma_j^\ve } \rightharpoonup  0 \ass \ve \to 0,
		\]
		in sense of distributions. Here 
		\begin{align*}
			\gamma_j^\ve (s,t) &=\,  \left( \begin{matrix}  {1\over \sqrt{|\log \ve |}}  \, r  \, e^{-i \,  \alpha_\ve \, t} \, e^{i \, {s \over h} } \, \, e^{{2\pi \over N}\,  (j-1) \, i} \\
				s \end{matrix} \right), \\ 
			& \\
			\alpha_\ve &=  2 \left({1\over h^2} - {N-1 \over r^2} \right) + O \left( {\log |\log \ve | \over |\log \ve |} \right) \quad {\mbox {as}} \quad \ve \to 0,
		\end{align*}
		and ${\textbf t}_{\gamma_j^\ve }$ is the tangent vector to $\gamma_j^\ve.$
	\end{theorem}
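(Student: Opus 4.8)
\smallskip
\noindent\textbf{Sketch of proof of Theorem \ref{teo2}.}

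The first step is to reduce the three–dimensional problem to a two–dimensional one by helical symmetry. Imposing that $\vec\omega_\ve$ be invariant under the screw motion $S_\rho$ and that the velocity $\textbf{u}_\ve$ have no swirl (i.e.\ be orthogonal to the symmetry lines of the action) forces both $\vec\omega_\ve$ and $\textbf{u}_\ve$ to be determined by a single scalar \emph{helical stream function} $\psi=\psi_\ve(y)$, $y\in\R^2$, via the curl of the Killing field generated by $S_\rho$. Looking for a solution that rotates rigidly with angular velocity $\alpha_\ve$ turns the vorticity equation \eqref{euler} into a \emph{stationary} problem: in the rotating frame the helical vorticity becomes a function of the shifted stream function, and $\psi$ solves a semilinear elliptic equation of the schematic form
\[
-\,\mathcal L_h\,\psi \;=\; \frac{\lambda_\ve}{\ve^{2}}\,V_{\alpha_\ve}(y)\,e^{\psi}\qquad\text{in }\R^2,
\]
where $\mathcal L_h=\div\!\big(K_h\nabla\,\cdot\,\big)$ is the helical Laplacian with coefficient $K_h(y)=\big(1+|y|^2/h^2\big)^{-1}$ (elliptic, but degenerating at spatial infinity), $V_{\alpha_\ve}$ is a smooth weight gathering the Jacobian of the helical change of variables and the rotation potential $\exp\!\big(-\tfrac12\alpha_\ve|y|^2(\cdots)\big)$, and the coupling constant $\lambda_\ve$ is calibrated so that each concentration bubble carries vorticity mass $8\pi$. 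The unknowns are the scalar field $\psi$ \emph{and} the rotation speed $\alpha_\ve$.

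Next I would rescale $y=\delta z$ with $\delta=r/\sqrt{|\log\ve|}$, bringing the polygon radius to size $O(1)$: in the $z$ variable the core scale becomes $\ve/\delta=\ve\sqrt{|\log\ve|}\to0$, the coefficient $K_h$ becomes $1-\delta^2|z|^2/h^2+\cdots\to1$, and the equation turns into a singularly perturbed Liouville equation on $\R^2$ whose limiting profile at each concentration point is the standard Liouville bubble $U_{\mu,\xi}(z)=\log\frac{8\mu^2}{(\mu^2+|z-\xi|^2)^2}$, with $\int_{\R^2}e^{U}=8\pi$ — this produces the factor $8\pi$ in the statement. One then builds the approximate solution $\psi_{\mathrm{app}}=\sum_{j=1}^N U_{\mu,\xi_j}+(\text{lower order corrections})$ with $\xi_j$ at the vertices of a regular $N$-gon of radius $1$ to leading order; by the discrete symmetry $\psi(R_{2\pi/N}z)=\psi(z)$ all the scales $\mu_j$ coincide and the polygon stays rigid, so the only genuine parameters are a correction to the polygon radius and the scalar $\alpha_\ve$. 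Computing and estimating the error $\mathcal L_h\psi_{\mathrm{app}}+\lambda_\ve\ve^{-2}V_{\alpha_\ve}e^{\psi_{\mathrm{app}}}$ in a weighted $L^p$ norm, the non–removable part yields the \emph{reduced equations}; at leading order these reproduce exactly the rotating $N$-helix solution of \eqref{KMD1}, the self–induction contribution of $\mathcal L_h$ acting on the curved bubble (encoding the helix curvature, $\propto1/h^2$) balancing the pairwise Biot–Savart interaction with the other $N-1$ bubbles (which for the regular polygon contributes $(N-1)/r^2$) together with the rotation, so that $\nu=1/h$ (already imposed) and $\alpha_\ve=2\big(1/h^2-(N-1)/r^2\big)+o(1)$; carrying the expansion one order further produces the correction $O\big(\log|\log\ve|/|\log\ve|\big)$.

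The construction itself is then an inner–outer gluing / Lyapunov–Schmidt argument. Writing $\psi=\psi_{\mathrm{app}}+\phi$, the linearization at each bubble is the Liouville–linearized operator $L_0\phi=\Delta\phi+\frac{8\mu^2}{(\mu^2+|z|^2)^2}\phi$, whose bounded kernel is spanned by the dilation mode $\frac{\mu^2-|z|^2}{\mu^2+|z|^2}$ and the two translation modes $\frac{z_i}{\mu^2+|z|^2}$. One establishes uniform invertibility of the full linearized operator modulo these modes in suitable weighted norms, solves the projected nonlinear problem for $\phi$ by contraction, obtaining $\phi=\phi(\alpha_\ve,\text{radius})$ of the required size, and finally solves the finite–dimensional reduced system (vanishing of the Lagrange multipliers), which by the $\Z_N$ symmetry collapses to two scalar identities fixing the radius correction and $\alpha_\ve$; their solvability uses the non–degeneracy of the regular–polygon central configuration (cf.\ \cite{craig1,craig2}). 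Undoing the rescaling and the symmetry reduction returns a genuine smooth, helically symmetric solution $\vec\omega_\ve$ of \eqref{euler} rotating at speed $\alpha_\ve$, and the weak convergence $\vec\omega_\ve-8\pi\sum_{j}\delta_{\gamma_j^\ve}\textbf{t}_{\gamma_j^\ve}\rightharpoonup0$ follows from the mass $8\pi$ of each Liouville bubble, the fact that helical vorticity is everywhere tangent to the helices, and the identification of their rescaled radius $r/\sqrt{|\log\ve|}$ with $\delta$.

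I expect the main obstacle to be the linear theory in this doubly singular regime: the $N$ bubbles simultaneously concentrate at scale $\ve$ and collapse toward the symmetry axis, their mutual distance being only $r/\sqrt{|\log\ve|}\to0$, while interacting with the variable, non–uniformly elliptic coefficients of $\mathcal L_h$ and with the rotation potential, which is itself of size $\sim|\log\ve|^{-1}$ in the rescaled variable. One needs linear estimates with constants uniform in $\ve$ and, at the same time, an expansion of the error sharp enough to isolate the logarithmically small correction $O\big(\log|\log\ve|/|\log\ve|\big)$ to $\alpha_\ve$; reconciling the robustness required of the linear theory with the precision required of the reduced equation is the technical crux of the argument.
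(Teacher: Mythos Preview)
Your outline is essentially the paper's strategy --- helical reduction to a 2D semilinear elliptic equation, Liouville bubbles at the polygon vertices, inner--outer gluing, and a reduced equation fixing $\alpha_\ve$ --- but a few details differ from the actual execution. First, the helical operator has a \emph{matrix} coefficient $K$ (see \eqref{defK-0}), not the scalar $(1+|y|^2/h^2)^{-1}$; this forces an anisotropic local change of variables $x-P_j=M_j z$ with $M$ as in \eqref{matrixdef} near each vortex, rather than a simple dilation, and much of Sections~\ref{sub1}--\ref{app} is devoted to controlling the resulting lower-order operator $B$. Second, the rotation enters as a shift $\psi-\tfrac{\alpha}{2}|\log\ve|\,|x|^2$ inside the nonlinearity $F(s)=\ve^2\eta(s)e^s$ (with a carefully designed cutoff $\eta$), not as a multiplicative weight. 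Third, and most substantively, the paper does \emph{not} rescale to a unit polygon and does \emph{not} invoke non-degeneracy of the polygonal configuration: the dihedral symmetry is imposed on every object from the start, and together with evenness in $z_2$ (Proposition~\ref{even-ness}) it kills the $Z_2$-mode automatically and collapses the reduced system to a \emph{single} scalar equation $d_1[\varphi,\phi,\alpha]=0$ in the single unknown $\alpha$; the radius $r$ is fixed throughout, and the remaining scaling parameter $\mu$ is eliminated earlier by the explicit relation \eqref{defmu}. Your appeal to the non-degeneracy of the central configuration would be needed if the vertices were perturbed independently, but the symmetric ansatz makes that step unnecessary and the argument correspondingly cleaner.
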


	\medskip
	Our result encompasses stationary solutions where vorticity is concentrated along $N$ nearly parallel helical filaments, corresponding to the stationary solution of \eqref{KMD1} given by \eqref{stat}. In this case, the pitch $h$ is related to the radius $r$ by
	$$
	r^2 = (N-1) \, h^2  + O \left( {\log |\log \ve | \over |\log \ve |} \right) \quad {\mbox {as}} \quad \ve \to 0.
	$$

	\medskip
	Another configuration solving \eqref{KMD1} consists of $N+1$ point vortices, setting $N$ of them
	with identical circulation at each vertex of a regular $N$-polygon and the $N+1$ vortex of
	arbitrary strength at the center of vorticity, namely the origin
	$$
	Z_{0} (t) = 0 , \quad Z_{j} (t) = r \, e^{-i \, \kappa \,  {N + 1 \over r^2} \,  t} \, e^{{2\pi \over N} (j-1) \, i}.
	$$
	Arguing as before and choosing again $\kappa =2$, one obtains a solution of \eqref{KMD1}  in the form of  $N$ circular helices surrounding a straight line
	\be \label{helicessharp}
	\gamma_{0,\sharp} (s,t) = \left( \begin{matrix} 0 \\ 0\\
		s \end{matrix} \right)  , \quad \gamma_{j,\sharp} (s,t) = \left( \begin{matrix} r  \,  e^{- 2 \, ( {1\over h^2} - {N+1 \over r^2}) \, i \, t } \, e^{i \, {s \over h}  } \, e^{{2\pi \over N} (j-1) \, i} \\
		s \end{matrix} \right) \quad j=1, \ldots , N.
	\ee
	
	\medskip
	\medskip
	
	We have a result analogous to the one in Theorem \ref{teo2} also for this configuration
	
	\begin{theorem}\label{teo} 
		Let
		$$
		r>0, \quad h \not=0
		$$   
		be given.
		Let $\gamma_{0,\sharp}$, $\gamma_{j, \sharp}$, for $j=1,\ldots,N$, be parametrized by \eqref{helicessharp}. Then
		there exist $\alpha_\ve >0$ and  a smooth  solution $\vec \omega_\ve (x,t)$ to $\equ{euler}$ such that
		\[
		\vec\omega_\ve (x, t) -  8 \, \pi \,\sum_{j=1}^N  \delta_{\gamma_{j,\sharp}^\ve }{\textbf t}_{\gamma_{j,\sharp}^\ve } \rightharpoonup  0 \ass \ve \to 0,
		\]
		in sense of distributions. Here 
		\begin{align*}
			\gamma_{j,\sharp}^\ve (s,t) &=\,  \left( \begin{matrix}  {1\over \sqrt{|\log \ve |}}  \, r  \, e^{-i \,  \alpha_\ve \, t} \, e^{i \, {s \over h} } \, \, e^{{2\pi \over N}\,  (j-1) \, i} \\
				s \end{matrix} \right), \\ 
			& \\
			\alpha_\ve &=  2 \left({1\over h^2} - {N+1 \over r^2} \right) + O \left( {\log |\log \ve | \over |\log \ve |} \right) \quad {\mbox {as}} \quad \ve \to 0,
		\end{align*}
		and ${\textbf t}_{\gamma_{j,\sharp}^\ve }$ is the tangent vector to $\gamma_{j,\sharp}^\ve.$
		
	\end{theorem}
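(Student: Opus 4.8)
\emph{Proof strategy.} Since this configuration differs from the one in Theorem \ref{teo2} only by the addition of the central straight filament $\gamma_{0,\sharp}$, which sits on the axis of the helical symmetry group $\{S_\rho\}$, the plan is to run the same elliptic singular–perturbation (gluing) scheme used for Theorem \ref{teo2}, tracking only the modifications produced by this extra filament. First I would use the helical symmetry to reduce \eqref{euler} to a two–dimensional problem: for helically symmetric fields with velocity orthogonal to the symmetry lines, the vorticity is encoded by a scalar $w=w(y,t)$ on $\RR^2$ transported by the planar velocity generated by a stream function $\psi$ solving a weighted elliptic equation $-\div\bigl(K_\HH(y)\,\nn\psi\bigr)=w$, where the positive–definite matrix coefficient $K_\HH$ is built from the helical geometry of pitch $2\pi h$. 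Searching for a solution that rotates rigidly about the origin with angular speed $\alpha_\ve$ turns this, in the rotating frame, into a stationary semilinear equation of the form $-\div\bigl(K_\HH\nn\psi\bigr)=f_\ve\bigl(\psi+\tfrac{\alpha_\ve}{2}|y|^2\bigr)$, with $f_\ve$ the same one–dimensional vorticity profile as for Theorem \ref{teo2} and the quadratic term carrying the centrifugal contribution of the rotation; the central filament corresponds to requiring, in addition, invariance under the full rotation group, which pins it to $y=0$.

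Next I would construct an approximate solution as a superposition of $N+1$ rescaled bubbles: one centered at the origin (for $\gamma_{0,\sharp}$) and $N$ centered at the vertices $\tfrac{r}{\sqrt{|\log\ve|}}\,e^{2\pi(j-1)i/N}$ of the shrinking regular $N$–polygon, each bubble being a rescaling at the core scale $\ve$ of the model one–filament profile on $\RR^2$, corrected at the intermediate scale by the regular part of the Green's function of $-\div(K_\HH\nn\,\cdot\,)$ and by the fields generated by the other filaments. I expect the resulting error, measured in a suitable weighted $L^p$ norm, to be controlled by negative powers of $|\log\ve|$ plus the deviation of the $N+1$ centers from being a central configuration of the point–vortex interaction in \eqref{KMD1}. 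Then, exactly as for Theorem \ref{teo2}, I would set up a Lyapunov–Schmidt reduction: the linearized model operator at a single bubble has, modulo the scaling symmetry, only the two planar translation modes in its kernel, and neither the weight $K_\HH$ nor the quadratic term disturbs the associated a priori estimates at the core scale; solving the equation orthogonally to the approximate kernels by a contraction mapping produces a correction smaller than the error, and the full problem reduces to $2(N+1)$ scalar equations for the centers — equivalently, for the radius $r_\ve$, the rotation speed $\alpha_\ve$, and the location of the central filament.

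Finally, imposing invariance under the rotation by $2\pi/N$ and under a reflection collapses this reduced system: the central filament is forced onto the axis, the outer filaments onto a regular polygon, and one is left with a single scalar equation whose leading order reproduces the $(N+1)$–point central–configuration balance for \eqref{KMD1}, giving $\alpha_\ve=2\bigl(\tfrac1{h^2}-\tfrac{N+1}{r^2}\bigr)$ to main order. The implicit function theorem then yields solvability together with the correction of size $O\bigl(\log|\log\ve|/|\log\ve|\bigr)$. Undoing the reductions and rescalings, and transporting the concentration of $w$ back to $\RR^3$, one obtains a helically symmetric smooth solution $\vec\omega_\ve$ of \eqref{euler} satisfying the asserted weak convergence.

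The hard part will be the uniform invertibility, as $\ve\to0$, of the \emph{full} linearized operator around the approximate solution: one must ensure that the coupling between the central straight filament and the surrounding helical ones — which mixes the core scale $\ve$ with the separation scale $1/\sqrt{|\log\ve|}$ through the non–constant coefficient $K_\HH$ — does not generate small eigenvalues beyond those already accounted for by the translation modes, and that the resulting linear estimates hold with constants independent of $\ve$. That analysis, together with the careful bookkeeping needed to isolate the logarithmic correction in $\alpha_\ve$, is where essentially all of the technical work lies; the remaining steps run parallel to the proof of Theorem \ref{teo2}.
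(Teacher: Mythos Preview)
Your approach is correct and matches what the paper intends: it explicitly omits the proof, calling it a straightforward adaptation of Theorem~\ref{teo2}, which is precisely your plan of adding a central bubble at the origin and rerunning the inner--outer gluing scheme. One correction worth making: the symmetry used throughout is the dihedral group $D_N$ already present in the proof of Theorem~\ref{teo2}, not the full rotation group, and for $N\ge2$ this discrete symmetry alone pins the central bubble to the origin and kills both translation modes $Z_1,Z_2$ there; since moreover $R=0$ at the origin makes the matrix $M$ the identity and the perturbation $B$ of Proposition~\ref{prop01} vanish, the central inner problem is the \emph{easiest} piece rather than the hardest, so your anticipated ``hard part'' does not materialize --- the only substantive change from Theorem~\ref{teo2} is the extra interaction term the central bubble contributes to $\mathcal A(\alpha)$ in \eqref{defA}, replacing $(N-1)/r$ by $(N+1)/r$.
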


	We observe that the deformation magnitude of each helix $\gamma_j^\ve$ in Theorem \ref{teo2}, or of each helix  $\gamma_{j, \sharp}^\ve$ in Theorem \ref{teo}, relative to the perfectly straight line $ \left( \begin{matrix} 0 \\ 0 \\ s \end{matrix} \right) $ is proportional to ${1\over \sqrt{|\log \ve|}}$. Additionally, the relative distance between helices is also of the order of ${1\over \sqrt{|\log \ve|}}$. This description aligns with the model proposed by Klein, Majda, and Damodaran \cite{klein-majda-damodaran}.

	\medskip
	Multiple helical filaments for the Euler equations have been studied in previous papers. In \cite{ddmw2} and \cite{dlm}, the helical filaments are separated by a distance independent of $\ve$. Therefore, those solutions do not fit the model for nearly parallel vortex filaments as described in \cite{klein-majda-damodaran}. In \cite{guerra-musso}, we construct solutions to \eqref{euler} with vorticity concentrated around several  helical filaments. Those helical filaments rotate with constant speed and cluster around a straight line of the form $ \left( \begin{matrix} 0 \\ 0 \\ r_0 + s \end{matrix} \right) $, with $r_0>0$. In that construction \cite{guerra-musso}, the relative distance among the filaments is of order ${1\over |\log \ve|}$ and the asymptotic motion law for the centers of these helices differs from \eqref{KMD1}, leading to solutions distinct from those in Theorems \ref{teo2} and \ref{teo}.

	\medskip
	As far as we are aware, a  justification in full generality of the Klein-Majda-Damodaran model remains an open question for the Euler equations. Specifically, given a solution to \eqref{KMD1}, Does there exist any solution to \eqref{euler} with initial vorticity concentrated around the curves $ \left( \begin{matrix} X_1(s,0) \\  s \end{matrix} \right), \ldots , \left( \begin{matrix} X_N(s,0) \\  s \end{matrix} \right)$, such that the vorticity remains concentrated in small tubes around an evolving solution to \eqref{KMD1}?.

	\medskip
	System \eqref{KMD1} has been rigorously derived as a limit from PDE models related to fluid mechanics other than the Euler equations. This derivation was done in \cite{jerrard-smets-gp} for the Gross-Pitaevskii equations in three dimensions, which appear in various contexts such as Bose-Einstein condensate theory, nonlinear optics, and superfluidity.
	
	Associated with the helix, there exist solutions to the Gross-Pitaevskii equation. Traveling wave solutions with small speed were constructed in \cite{chiron}. Interacting helical traveling waves for the Gross-Pitaevskii equations have been considered in \cite{ddmr1}. The corresponding traveling single-helix solutions to the Schrödinger map equation were proved to exist in \cite{wei-yang}, and for the Ginzburg-Landau equations for superconductivity in \cite{contreras-jerrard, ddmr}.
	
	\medskip
	Our construction leverages a screw-driving symmetry invariance present in the Euler equations, as observed in \cite{dutrifoy} and \cite{ettinger-titi}. For more recent results, see \cite{bnl,jln,velasco}. According to the analysis in these references, a vector field of the form
	\be \label{vector-vorticity}
	\begin{aligned}
		\vec\omega(x,\tau ) = \, w( Q_{-\frac {x_3}h} x', |\log \ve |^{-1} \tau )
		\left (\begin{matrix} \, \frac{1}{h}  \,  Q_{\frac \pi 2} x' , \, 1 \end{matrix} \right) , \qquad x'= (x_1,x_2),
	\end{aligned}
	\ee
	solves \eqref{euler} if the scalar function $w(x',t)$ satisfies the transport equation
	$$
	\begin{aligned}
		|\log \ve |	\partial_t w + \nabla^\perp \psi \cdot \nabla w &=0 && \text{in} \quad \mathbb{R}^2 \times (0, T) \\-\nabla \cdot (K \nabla \psi) &= w && \text{in} \quad \mathbb{R}^2 \times (0, T),
	\end{aligned}
	$$
	where $(a,b)^\perp = (b,-a)$, $h>0$ is a given number  and
	$K(x_1 , x_2)$ is the matrix
	\begin{equation}
		\label{defK-0}
		K(x_1 , x_2 ) = \frac{1}{h^2 + x_1^2 + x_2^2}
		\left(
		\begin{matrix}
			h^2 + x_2^2 & -x_1 x_2 \\
			-x_1 x_2 & h^2 + x_1^2
		\end{matrix}
		\right).
	\end{equation}
	In \eqref{vector-vorticity}, $Q_\theta$ stands for the rotation matrix 
	\begin{equation}\label{rotation-matrix}
		Q_\theta = \left(\begin{matrix} 
			\cos \theta&  -\sin \theta \\
			\sin \theta & \cos \theta
		\end{matrix} \right).
	\end{equation}
	Rotating solutions to the problem with constant speed $\alpha$ have the form
	$$
	w (x', \tau) = w \left( Q_{ \alpha \tau } x' \right), \quad \psi (x',\tau) = \psi \left( Q_{ \alpha \tau } x' \right). 
	$$
	They can be obtained by solving the elliptic equation
	\be \label{P}
	\nabla \cdot ( K \nabla \psi) + F(\psi -{\alpha \over 2} \, |\log \ve | \, |x|^2)=0 \quad \text{in} \quad \mathbb{R}^2,
	\ee
	for any function $F \in C^1.$ In this case, $\psi$ is the stream function of the fluid and the vorticity-strength  is given by $\omega = F(\psi -{\alpha \over 2} \, |\log \ve | \, |x|^2 )$. See \cite{ddmw2} for detailed derivation of the formulation \eqref{P}.
	
	A solution with helical symmetry whose vorticity is $\ve$-concentrated around one helical filament corresponds to a solution of \eqref{P} such that
	$$
	\omega - c \delta_P = F(\psi -{\alpha \over 2} \, |\log \ve | \, |x|^2) - c \delta_P \rightharpoonup 0  \ass \ve \to 0,
	$$
	where $\delta_P$ is the Dirac delta centered at the point $P \in \R^2$ and $c$ is a constant.
	
	\medskip
	In the proof of Theorem \ref{teo2}, we proceed as follows: We construct the stream function $\psi$ for several helical filaments as a superposition of $\ve$-regularized Green's functions for the elliptic operator
	$\nabla \cdot (K \nabla \psi) $ centered at points of the form
	$$
	P_j = {r \over \sqrt{|\log \ve |}} \, e^{{2\pi \over N} \, (j-1) \, i}, \quad j=1, \ldots , N.
	$$
	This construction is detailed in Section \S \ref{app}, following a discussion on the operator $\nabla \cdot (K \nabla \psi) $ in Section \S \ref{sub1}. Next in Section \S \ref{sec4} we choose an appropriate non-linearity $F$ and construct an approximate solution to \eqref{P} with the property that
	$$
	F(\psi -{\alpha \over 2} \, |\log \ve | \, |x|^2) - 8\pi \, \sum_{j=1}^N \delta_{P_j} \rightharpoonup 0  \ass \ve \to 0.
	$$
	The constant $8\pi$  is related to our choice of the profile $F$. Once the approximate solution is built, we devise a strategy to prove that a small perturbation yields an actual solution to the problem. Our strategy involves setting up an inner-outer gluing scheme in Section \S \ref{quattro}, that we completely solve in Section \S \ref{sette}. It is at this point that the choice of $\alpha$ to be  $2 \left({1\over h^2} - {N-1 \over r^2} \right) + o(1)$ as $\ve \to 0$ is required, as in the statement of Theorem \ref{teo2} and in accordance with the discussion leading to \eqref{helices}. For clarity, we defer some technical results to Sections \S \ref{AppeA} and \S \ref{AppeB}.
	Since the proof of Theorem \ref{teo} is a relatively straightforward adaptation of the proof of Theorem \ref{teo2}, we will omit it.

	\section{Preliminaries on the operator $\nabla \cdot ( K \, \nabla \, )$}
	\label{sub1}

	Let   $L$ denote the elliptic operator
	\be \label{defL}
	\begin{aligned}
		L&:=-\nabla \cdot ( K \, \nabla \, ),
	\end{aligned}
	\ee
	where $K$ is defined in \eqref{defK-0}.
	The main purpose of this section is to find an approximate stream function $\psi$ with the property
	$$
	L ( \psi) \sim c \, \delta_P,
	$$
	where $\delta_P$ is the Dirac delta centered at the point $P \in \R^2$ and $c$ is a constant.
	
	We achieve this in several steps. First, we analyze the operator $L$ and prove that it is a small perturbation of the standard Laplace operator, when considered in a small neighborhood of a given point $P \in \R^2$ and after performing an appropriate change of variable. We take a point of the form
	$P= (R,0)$, for some $R>0$ and the appropriate change of variable  given by  
	\be \label{matrixdef}
	x-(R,0) = M \, z, \quad M = \left( \begin{matrix} 
		{  h\over  \sqrt{h^2+  R^2} } & 0 \\
		0 
		& 1
	\end{matrix} \right).
	\ee
	In this Section, we choose a specific regularization of the Green's function and describe its asymptotic behavior in a fixed, small neighborhood of $P$. This yields an approximate stream function, initially defined only locally around $P$.

	\medskip
	We proceed assuming that $h\not= 0$ and $P=(R,0)$ are given.
	For a function $\psi = \psi(x)$, we use the notation $\Psi$ to express $\psi$ after the change of variable  \eqref{matrixdef}
	$$
	\Psi (z) = \psi \left( Mz + (R,0) \right).
	$$
	Next proposition shows that
	the operator $L$ is a perturbation of the Laplace operator when expressed in the $z$-variable and evaluated around the point $(R,0)$. 
	
	\begin{prop}\label{prop01}
		It holds
		\be \label{Lz}
		L \psi = L_0 \Psi , \quad {\mbox {where}} \quad L_0 :=  \Delta_z + B,
		\ee
		and 
		$$
		\begin{aligned}
			B&=-\left(   
			{2R h \over  (h^2+ R ^2)^{3/2}} \, z_1 + O(|z|^2 )\right) \pp_{z_1 z_1} + O(|z|^2 ) \pp_{z_2 z_2} \\
			&-\left( 
			{2 R  \over h\sqrt{h^2+ R^2} } z_2 + O(|z|^2) \right) \pp_{z_1 z_2}
			\\
			- \left( {R \over  h\sqrt{h^2+ R^2} }\right. &\left.\left({2h^2\over h^2+ R^2} +1 \right)  + O(|z|) \right) \pp_{z_1 }  
			- \left(\frac{z_2}{h^2+ R^2} \left({2 h^2\over h^2+  R^2} +1 \right)+O(|z|^2)\right) \pp_{z_2 }.
		\end{aligned}
		$$
		where $|z|\leq \delta$, for any $\delta >0$ small.

	\end{prop}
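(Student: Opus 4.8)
The identity \eqref{Lz} will follow from a direct computation: I would put $L$ into non-divergence form, perform the change of variables \eqref{matrixdef}, and Taylor-expand the resulting coefficients around $z=0$.

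First, write
\[
L\psi \;=\; -\sum_{i,j=1}^{2} K_{ij}(x)\,\partial_{x_i x_j}\psi \;-\; \sum_{j=1}^{2} c_j(x)\,\partial_{x_j}\psi,\qquad c_j(x):=\sum_{i=1}^{2}\partial_{x_i}K_{ij}(x).
\]
A short calculation from \eqref{defK-0} gives the (radial) drift coefficients
\[
c_1(x)=-\frac{x_1\,(3h^2+x_1^2+x_2^2)}{(h^2+x_1^2+x_2^2)^{2}},\qquad c_2(x)=-\frac{x_2\,(3h^2+x_1^2+x_2^2)}{(h^2+x_1^2+x_2^2)^{2}}.
\]

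Next, since $M$ in \eqref{matrixdef} is diagonal, the change of variables reads $x_1=\tfrac{h}{\sqrt{h^2+R^2}}\,z_1+R$, $x_2=z_2$, so $\partial_{x_1}=\tfrac{\sqrt{h^2+R^2}}{h}\,\partial_{z_1}$ and $\partial_{x_2}=\partial_{z_2}$. Substituting into the non-divergence form, $L\psi$ becomes, as an operator on $\Psi(z)=\psi(Mz+(R,0))$, a second-order operator whose coefficients are those above, pre-composed with $x(z)=Mz+(R,0)$ and carrying one factor $\tfrac{\sqrt{h^2+R^2}}{h}$ for every $z_1$-derivative; its principal-part matrix in the $z$ variables is $M^{-1}K(x(z))M^{-T}$.

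I would then evaluate at $z=0$, i.e. $x=(R,0)$: there $K_{11}=\tfrac{h^2}{h^2+R^2}$, $K_{22}=1$, $K_{12}=0$, $c_2=0$. The diagonal matrix $M$ is chosen precisely so that $M^{-1}K(R,0)M^{-T}=\II$, which normalizes the transformed principal part at $z=0$ to the constant-coefficient Laplacian $\Delta_z$. Finally I would write each transformed coefficient as its value at $z=0$ plus a remainder and Taylor-expand $K_{ij}(x(z))$ and $c_j(x(z))$ to first order in $z$ about $(R,0)$: the linear parts of $K_{11}$, $K_{12}$, $c_1$, $c_2$ together with the purely quadratic correction of $K_{22}$ (and the fact that $c_2$ vanishes on $\{x_2=0\}$, which forces the $\partial_{z_2}$-coefficient to be $O(|z_2|)+O(|z|^2)$) reproduce exactly the operator $B$ in the statement. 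The remainders are $O(|z|^2)$ for the second-order coefficients and $O(|z|)$ for the $\partial_{z_1}$-coefficient, uniformly on $\{|z|\le\delta\}$, because $K$ --- built from $\tfrac{1}{h^2+|x|^2}$ with $h\neq0$ --- is smooth with all derivatives bounded there. This yields \eqref{Lz}.

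The computation is essentially bookkeeping. The single genuinely structural point is that the diagonal matrix $M$ is forced by the normalization $M^{-1}K(R,0)M^{-T}=\II$, which turns the transformed principal part into the plain Laplacian at the base point $P=(R,0)$. The remaining work --- obtaining $c_1$, $c_2$ and carrying out the first-order Taylor expansions while keeping track of the factors $\tfrac{\sqrt{h^2+R^2}}{h}$ coming from $M^{-1}$ --- is routine but must be organized carefully to match the precise coefficients displayed in the statement.
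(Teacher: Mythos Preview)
Your approach is correct and is essentially the same as the paper's: put $L$ in non-divergence form, perform the change of variables \eqref{matrixdef}, and Taylor-expand. The paper's own proof is in fact terser than yours---it records the exact (pre-expansion) expression for $B$ as \eqref{Bexpr} and then defers the detailed derivation to Proposition~2.1 of \cite{guerra-musso}; your outline spells out the structural reason (the normalization $M^{-1}K(R,0)M^{-T}=\II$) that the paper leaves implicit.
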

	\begin{proof}
		Using the change of variables \eqref{matrixdef}, one gets \eqref{Lz} with
		\be\label{Bexpr}
		\begin{aligned}
			B&=\left( {h^2(R^2-|x|^2)+z_2^2(h^2+R^2) \over (h^2+ r^2)h^2}\right) \pp_{z_1 z_1} 
			+ {1 \over (h^2+ |x|^2)}\left(\left(z_1\frac{h}{\sqrt{h^2+ R^2}}+R\right)^2-|x|^2\right) \pp_{z_2 z_2} \\
			&-2{\sqrt{h^2+R^2} \over h (h^2+ |x|^2)}z_2\left( z_1
			{h \over \sqrt{h^2+ R^2}} +R \right) \pp_{z_1 z_2}\\
			&- \frac{z_1(h^2+R^2)+Rh\sqrt{h^2+ R^2}}{h^2(h^2+|x|^2)} \left({2 h^2\over h^2+ |x|^2} +1 \right)  \pp_{z_1 }
			- \frac{z_2}{h^2+|x|^2} \left({2 h^2\over h^2+ |x|^2} +1 \right) \pp_{z_2 }.
		\end{aligned}
		\ee
		Here $|x|^2=R^2 + 2 {R h \over \sqrt{h^2 + R^2}}  z_1 + {h^2 \over h^2 + R^2} z_1^2 + z_2^2.$
		Detailed computations can be found in the proof of Proposition 2.1 in \cite{guerra-musso}, taking $a=R$ and $b=0$.
	\end{proof}

	A vorticity concentrated around the point $(R,0)$ has the form 
	$$
	- L_0 \Psi =  c\, \delta_{(R,0)},
	$$
	where $\delta_P$ denotes the Dirac delta at the point $P$, and $c>0$ is a constant.
	Next Proposition introduces a family of regular functions which resemble a multiple of the Green function for the operator $L_0$. This is constructed upon
	a radial solution of the Liouville equation
	\be \label{liouville}
	\Delta u + e^u = 0 \quad \inn \R^2, \quad \int_{\R^2} e^u \, = 8\pi < \infty.
	\ee
	All solutions to \eqref{liouville} that are radially symmetric with respect to the origin are given by 
	\be \label{defGamma}
	\Gamma_{\mu \ve} (z) - 2 \log \ve \mu , \quad {\mbox {where}} \quad \Gamma_{\ve  \mu } (z) = \log {8 \over (\ve^2 \mu^2 + |z|^2 )^2}
	\ee
	for any value of the constants $\ve$ and $\mu>0$. Indeed we have
	$$
	-\Delta \Gamma_{\ve  \mu }=\ve^2 \mu^2 e^{\Gamma_{\ve \mu}} = {1\over \ve^2 \mu^2} U \left( {z \over \ve \mu} \right)  \quad {\mbox {with}} \quad U(y) = \frac8{(1+ |y|^2)^2}.  
	$$
	We define
	\be\label{defGamma0}
	\Gamma (y) = \log {8 \over (1 + |y|^2 )^2}, \quad {\mbox {so that}} \quad \Delta_y \Gamma + e^\Gamma =0.
	\ee
	A direct computation shows that
	$$
	-\Delta \Gamma_{\ve  \mu } \rightharpoonup 8 \pi \delta_0 , \quad \ass \ve \mu \to 0.
	$$
	
	\begin{prop}\label{prop02}
		For any $\ve >0$ and $\mu >0$, we define the function
		\be \label{defpsi2}\begin{aligned}
			\Psi_{ \ve  \mu } (z) &= \Gamma_{\ve \mu} (z) \left( 1+c_1 z_1 + c_2 |z|^2\right) +   {4 R^3 \over h(h^2+ R^2)^{3\over 2}}  H_{1\ve} (z) ,\\
			c_1 & ={1\over 2}  {R h \over (h^2+R^2)^{3\over 2} }\\
			c_2 &= {R^2 \over 8 (h^2+ R^2)^2 } \left({2 h^2\over h^2+ R^2} +1  \right)
		\end{aligned}
		\ee
		where $\Gamma_{\ve \mu}$ is given in \eqref{defGamma}
		and $H_{1\ve} $ has the form $H_{1\ve}  (z ) := h_1 (|z|) \cos 3 \theta$, with $z=|z|e^{i\theta}$ and it solves
		\be\label{H1}
		\Delta_z H_{1\ve}  + {{\mbox{Re} } (z^3) \over (\ve^2 \mu^2 + |z|^2 )^2} = 0.
		\ee
		Write
		\begin{equation}\label{ee10}
			L_0 (\Psi_{\ve \mu} ) (z)= \Delta \Gamma_{\ve \mu}  + {4 R (3h^2+ R^2) \over h(h^2+ R^2)^{3\over 2} }  { \ve^2 \mu^2 \, z_1 \over (\ve^2 \mu^2 + |z|^2 )^2}  
			+ E(z)  .
		\end{equation}
		Then for any $C>0$, $\delta >0$, there exists $\bar \ve >0$ such that, for all $\ve \in (0, \bar \ve)$
		$$
		|h_1 (z) | \leq C |z|, \quad |E (z)| \leq C
		$$
		uniformly for all $|z| \leq \delta$, $0 < \mu \leq \delta$, $0<R\leq \delta$.
	\end{prop}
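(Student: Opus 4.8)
The plan is to compute $L_0\Psi_{\ve\mu}$ term by term from \eqref{Lz}, \eqref{Bexpr} and \eqref{defpsi2}, to separate out the handful of contributions that fail to be bounded on $\{|z|\le\delta\}$ as $\ve\mu\to0$, and to check that the constants $c_1,c_2$ and the prefactor $\beta:=\tfrac{4R^3}{h(h^2+R^2)^{3/2}}$ of $H_{1\ve}$ are chosen exactly so that these contributions either assemble into the explicit term of \eqref{ee10} or cancel identically. First I would record, with $\rho^2:=\ve^2\mu^2+|z|^2$, the elementary identities that follow from \eqref{defGamma}: $\nabla\Gamma_{\ve\mu}=-\tfrac{4}{\rho^2}z$, $\Delta\Gamma_{\ve\mu}=-\tfrac{8\ve^2\mu^2}{\rho^4}$, $\pp_{z_1z_1}\Gamma_{\ve\mu}=-\tfrac4{\rho^2}+\tfrac{8z_1^2}{\rho^4}$, $\pp_{z_1z_2}\Gamma_{\ve\mu}=\tfrac{8z_1z_2}{\rho^4}$, and likewise for $\pp_{z_2z_2}$. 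Expanding $\Psi_{\ve\mu}=\Gamma_{\ve\mu}(1+c_1z_1+c_2|z|^2)+\beta H_{1\ve}$ by the Leibniz rule and using the defining equation $\Delta H_{1\ve}=-\mathrm{Re}(z^3)/\rho^4$ from \eqref{H1}, one gets $L_0\Psi_{\ve\mu}$ as a sum of: $\Delta\Gamma_{\ve\mu}$; the pieces $c_1z_1\Delta\Gamma_{\ve\mu}$, $c_2|z|^2\Delta\Gamma_{\ve\mu}$, $2\nabla\Gamma_{\ve\mu}\cdot\nabla(c_1z_1+c_2|z|^2)$, $4c_2\Gamma_{\ve\mu}$ and $-\beta\,\mathrm{Re}(z^3)/\rho^4$ coming from $\Delta$ of the correction; and all the $B$--contributions. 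Inspecting the orders of vanishing at $z=0$ of the coefficients of $B$ in \eqref{Bexpr} (second order: linear or better; $\pp_{z_1}$: order $1$; $\pp_{z_2}$: linear) against the degrees of homogeneity of the derivatives of $\Gamma_{\ve\mu}$ and $H_{1\ve}$, I expect every contribution to be $O(1)$ on $\{|z|\le\delta\}$ except those carrying the monomials $\tfrac{z_1}{\rho^2}$, $\tfrac{\mathrm{Re}(z^3)}{\rho^4}$ and $\tfrac{\ve^2\mu^2z_1}{\rho^4}$.

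The core of the proof is the verification that these resonant coefficients come out right. Using the algebraic identities $z_1^3=\tfrac14\big(3z_1|z|^2+\mathrm{Re}(z^3)\big)$, $z_1z_2^2=\tfrac14\big(z_1|z|^2-\mathrm{Re}(z^3)\big)$ and $\tfrac{z_1|z|^2}{\rho^4}=\tfrac{z_1}{\rho^2}-\tfrac{\ve^2\mu^2z_1}{\rho^4}$, one rewrites $z_1\pp_{z_1z_1}\Gamma_{\ve\mu}$ and $z_2\pp_{z_1z_2}\Gamma_{\ve\mu}$ in these monomials; combining with $2\nabla\Gamma_{\ve\mu}\cdot\nabla(c_1z_1)=-8c_1\tfrac{z_1}{\rho^2}$, with the $\pp_{z_1}\Gamma_{\ve\mu}$ term of $B\Gamma_{\ve\mu}$, and with $\beta\Delta H_{1\ve}=-\beta\tfrac{\mathrm{Re}(z^3)}{\rho^4}$, a short computation gives: the coefficient of $\tfrac{z_1}{\rho^2}$ equals $-\tfrac{8Rh}{(h^2+R^2)^{3/2}}-\tfrac{4R}{h\sqrt{h^2+R^2}}+\tfrac{4R}{h\sqrt{h^2+R^2}}\big(\tfrac{2h^2}{h^2+R^2}+1\big)=0$ --- this is precisely what forces $c_1=\tfrac12\tfrac{Rh}{(h^2+R^2)^{3/2}}$; the coefficient of $\tfrac{\mathrm{Re}(z^3)}{\rho^4}$ equals $-\tfrac{4Rh}{(h^2+R^2)^{3/2}}+\tfrac{4R}{h\sqrt{h^2+R^2}}-\tfrac{4R^3}{h(h^2+R^2)^{3/2}}=0$ --- this forces $\beta=\tfrac{4R^3}{h(h^2+R^2)^{3/2}}$; and the coefficient of $\tfrac{\ve^2\mu^2z_1}{\rho^4}$ equals $\tfrac{12Rh}{(h^2+R^2)^{3/2}}+\tfrac{4R}{h\sqrt{h^2+R^2}}-\tfrac{4Rh}{(h^2+R^2)^{3/2}}=\tfrac{4R(3h^2+R^2)}{h(h^2+R^2)^{3/2}}$, which is exactly the middle term of \eqref{ee10}. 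The constant $c_2$ plays the analogous role one order further out, arranging that the $O(|z|^2)$--coefficients of $B$ acting on $\Gamma_{\ve\mu}$ leave only a genuinely bounded, $R$--controlled remainder.

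It remains to construct $H_{1\ve}$ and prove $|h_1(z)|\le C|z|$. Writing $H_{1\ve}=h_1(r)\cos3\theta$ and using $\mathrm{Re}(z^3)=r^3\cos3\theta$, \eqref{H1} reduces to the ODE $h_1''+\tfrac1r h_1'-\tfrac9{r^2}h_1=-\tfrac{r^3}{(\ve^2\mu^2+r^2)^2}$, whose homogeneous solutions are $r^3$ and $r^{-3}$. I would solve it by variation of parameters: the first quadrature is elementary, $\int\tfrac{r\,dr}{(\ve^2\mu^2+r^2)^2}=-\tfrac1{2(\ve^2\mu^2+r^2)}$, and the second is a rational integral in $r^2$; choosing the integration constants so that $h_1$ is regular at the origin (hence $h_1=O(r^3)$ there, $h_1\sim\tfrac{r^3}{12\ve^2\mu^2}$ for $r\ll\ve\mu$) and grows at most linearly at infinity (possible because the forcing decays like $1/r$, giving $h_1\sim\tfrac r8$ for $r\gg\ve\mu$), one reads off $|h_1(r)|\le C\max\{r^3/(\ve\mu)^2,r\}\le Cr$ for $r\le\delta$, with $C$ absolute (in particular independent of $R$). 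Plugging $|h_1|\le Cr$, $|\nabla H_{1\ve}|\le C$, $|\nabla^2H_{1\ve}|\le C/|z|$ into the $B$--contributions of $\Psi_{\ve\mu}$, together with the above bounds on $\Gamma_{\ve\mu}$ and its derivatives and the smallness of $c_1,c_2,\beta$ for $R\le\delta$, yields the asserted bound on $E$.

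The main difficulty is not conceptual but organisational: reorganising the roughly twenty terms produced by $\Delta$ and $B$ into the three resonant monomials plus a manifestly bounded remainder, while keeping every constant uniform in the three parameters $\ve\le\bar\ve$, $\mu\le\delta$, $R\le\delta$ at once. Within that, the delicate step is the choice of the particular solution of the $H_{1\ve}$--equation: one must pin down the unique profile that is simultaneously smooth at $r=0$ and of only linear growth as $r\to\infty$, since a surviving $r^3$ component would destroy the bound $|h_1|\le Cr$.
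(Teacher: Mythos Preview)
Your approach is essentially the paper's: reorganise $L_0\Psi_{\ve\mu}$ around the resonant monomials $z_1/\rho^2$, $\mathrm{Re}(z^3)/\rho^4$, $\ve^2\mu^2 z_1/\rho^4$, and check the constants. The three coefficient computations you display are correct, and your treatment of $H_{1\ve}$ via the ODE is fine.

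There is, however, a genuine gap in your handling of $c_2$. You list $4c_2\Gamma_{\ve\mu}$ among the pieces coming from $\Delta(c_2|z|^2\Gamma_{\ve\mu})$ and then assert that ``every contribution is $O(1)$ except those carrying'' the three monomials. But $4c_2\Gamma_{\ve\mu}$ is \emph{not} $O(1)$: since $\Gamma_{\ve\mu}(0)=\log\frac{8}{(\ve\mu)^4}$ and $c_2=O(R^2)$ with $R\le\delta$ fixed, this term is of order $|\log\ve\mu|$ and blows up as $\ve\mu\to0$. It must be cancelled, and your explanation of where the matching term comes from is wrong. The $O(|z|^2)$ coefficients of $B$ acting on $\Gamma_{\ve\mu}$ are already bounded (e.g.\ $|z|^2\,\pp_{z_iz_j}\Gamma_{\ve\mu}=O(|z|^2/\rho^2)=O(1)$), so nothing there needs $c_2$.

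The actual source of the matching $\Gamma_{\ve\mu}$ is $B$ acting on the \emph{first} correction $c_1 z_1\Gamma_{\ve\mu}$. The constant part of the $\pp_{z_1}$--coefficient in $B$, namely $-\tfrac{R}{h\sqrt{h^2+R^2}}\bigl(\tfrac{2h^2}{h^2+R^2}+1\bigr)$, hits the factor $c_1\Gamma_{\ve\mu}$ in $\pp_{z_1}(c_1 z_1\Gamma_{\ve\mu})$ and produces
\[
-\,c_1\,\frac{R}{h\sqrt{h^2+R^2}}\Bigl(\frac{2h^2}{h^2+R^2}+1\Bigr)\Gamma_{\ve\mu}
\;=\;-\,\frac{R^2}{2(h^2+R^2)^2}\Bigl(\frac{2h^2}{h^2+R^2}+1\Bigr)\Gamma_{\ve\mu},
\]
which is exactly $-4c_2\Gamma_{\ve\mu}$ and cancels your term. (The remaining pieces of $c_1\,B[z_1\Gamma_{\ve\mu}]$ are bounded.) Once you add this cancellation to your bookkeeping, the rest of your argument goes through and yields \eqref{ee10} with $E$ uniformly bounded.
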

	
	\begin{proof}
		The proof uses Proposition \ref{prop01} and is analogous to Proposition 2.2 in \cite{guerra-musso}. Here we make a sketch of the proof.  
		Let $\delta >0$ be small. Using the explicit form of $\Gamma_{\ve\mu}$, we can compute
		\begin{align*}
			B [ \Gamma_{\ve  \mu }] &= - {2 Rh \over (h^2+R^2)^{3/2} } z_1 \pp_{z_1 z_1} \Gamma_{\ve  \mu }  -  {2 R \over h\sqrt{h^2+ R^2}}  z_2 \pp_{z_1 z_2} \Gamma_{\ve  \mu }  \\
			&- {R \over h\sqrt{h^2+ R^2}} \left(1+ {2h^2 \over h^2+ R^2} \right) \pp_{z_1} \Gamma_{\ve  \mu }+ E_1 \\
			&= {4h R \over (h^2+ R^2)^{3\over 2} } {z_1 \over \ve^2 \mu^2 + |z|^2} + {4 R^3 \over h (h^2+ R^2)^{3\over 2}} {{\mbox {Re} } (z^3 ) \over (\ve^2 \mu^2 + |z|^2)^2} \\
			&+ {4 R (4h^2+ R^2) \over h (h^2+ R^2)^{3\over 2} }  { \ve^2 \mu^2 \, z_1 \over (\ve^2 \mu^2 + |z|^2 )^2}+E_1.
		\end{align*}
		where $E_1$ is a smooth  function, uniformly bounded for $\ve \mu $ small, in a bounded region for $|z|<\delta $.
		
		Now we modify the function $\Gamma_{\ve \mu}$ to eliminate the first term of the above error. We define
		\be \label{defc}
		\begin{aligned} 
			\Psi_{1} (z) &= \Gamma_{\ve \mu} (z) \left( 1+ c_1 z_1  \right), \quad {\mbox {with}} \\
			c_1 & ={1\over 2}  {R h \over (h^2+R^2)^{3\over 2} }\\
		\end{aligned}
		\ee
		Since
		\[
		\Delta_z (c_1 z_1 \Gamma_{\ve \mu} ) = - 8 \, c_1 \, {z_1 \over \ve^2 \mu^2 + |z|^2} - 8\,  c_1 \, {\ve^2 \mu^2 z_1 \over (\ve^2 \mu^2 + |z|^2 )^2}, 
		\]
		with this choice of  $c_1$,
		we get
		\begin{align*}
			L_0 (\Psi_1 ) (z)&= \Delta \Gamma_{\ve \mu} +  {4 R^3 \over h(h^2+ R^2)^{3\over 2}} {{\mbox {Re} } (z^3 ) \over (\ve^2 \mu^2 + |z|^2)^2} + {4 R (3h^2+ R^2) \over (h^2+ R^2)^{3\over 2} }  { \ve^2 \mu^2 \, z_1 \over (\ve^2 \mu^2 + |z|^2 )^2}\\
			& 
			+ c_1 B[z_1 \Gamma_{\ve \mu} ] 
			+ E_1
		\end{align*}
		where $E_1$ is an explicit function, which smooth in the variable $z $ and uniformly bounded, as $\ve \mu \to 0$.
		
		\medskip
		Similarly as above we compute
		\begin{align*}
			c_1 B[z_1 \Gamma_{\ve \mu} ] &
			= - {R^2 \over 2(h^2+ R^2)^2 } \left({2h^2\over h^2+ R^2} +1 \right)  \Gamma_{\ve \mu}
			+ E_2 
		\end{align*}
		where $E_2$ is another explicit function,  smooth in the variable $z $ and uniformly bounded, as $\ve \mu \to 0$.
		
		\medskip
		Combining these computations we obtain that the function $\psi_1$ introduced in \eqref{defc} satisfies
		\begin{align*}
			L_0 (\Psi_1 ) (z)&= \Delta \Gamma_{\ve \mu} +  {4 R^3 \over h(h^2+ R^2)^{3\over 2}} {{\mbox {Re} } (z^3 ) \over (\ve^2 \mu^2 + |z|^2)^2} + {4 R (3h^2+ R^2) \over h(h^2+ R^2)^{3\over 2} }  { \ve^2 \mu^2 \, z_1 \over (\ve^2 \mu^2 + |z|^2 )^2}\\
			&- {R^2 \over 2(h^2+ R^2)^2 } \left({2h^2\over h^2+ R^2} +1 \right)  
			\Gamma_{\ve \mu} + E_1 + E_2 ,
		\end{align*}
		where $E_1$ and  $E_2$ are explicit functions, smooth in the variable $z $ and uniformly bounded, as $\ve \mu \to 0$.

		\medskip
		Now we introduce a further modification to  $\psi_{1}$ to eliminate the two terms
		\[
		- {R^2 \over 2(h^2+ R^2)^2 } \left( {2h^2\over h^2+ R^2} +1    \right) \Gamma_{\ve \mu} \quad {\mbox {and}} \quad  {4 R^3 \over h(h^2+ R^2)^{3\over 2}} {{\mbox {Re} } (z^3 ) \over (\ve^2 \mu^2 + |z|^2)^2}.
		\]
		For the first one, we observe that
		\begin{align*}
			\Delta (c_2 |z|^2 \Gamma_{\ve \mu} ) &  - {R^2 \over 2(h^2+ R^2)^2 } \left({2 h^2\over h^2+ R^2} +1  \right) \Gamma_{\ve \mu} \\
			&= 2 c_2 z \cdot \nabla \Gamma_{\ve \mu} + c_2 |z|^2 \Delta \Gamma_{\ve \mu}
		\end{align*}
		if we choose $c_2$ as
		\[
		c_2  = {R^2 \over 8 (h^2+ R^2)^2 } \left({2 h^2\over h^2+ R^2} +1  \right).
		\]
		To correct the second term, we introduce $H_1$ as in \eqref{H1}. This concludes the proof.

	\end{proof}

	\section{The approximate stream function}\label{app}

	For the rest of the paper, we assume we are given two  numbers
	$$
	r>0 {\mbox { and }} h\not= 0
	$$
	as  in the statement of Theorem \ref{teo2}.

	In Subsection \ref{sub2}, we introduce a combination of several copies of the approximate stream functions constructed in Section \ref{sub1}, after we  centre them at the vertices $P_j$ of a regular polygon of radius ${r \over \sqrt{|\log \ve |}}$. In Subsection \ref{sub3}, we multiply this sum of approximate stream functions by an appropriate cut-off function and extend the definition to the whole $\R^2$, adding a correction term. Further adjustments are then required to control the effect of this correction on the local profile of the approximate stream function around each $P_j$. This final step is detailed in Subsection \ref{sub4}.

	\subsection{The approximate stream function near the vortices.}\label{sub2}

	Let $N \in \N$. For any $\ve >0$ small,  we let
	$P_j$ be the points arranged along the vertices of a regular polygon of $N$ sides
	\begin{equation}\label{points}
		\begin{aligned}
			P_j = & R_\ve  \,  Q_j (1,0) ,\quad R_\ve = {r \over \sqrt{|\log \ve|}}, \quad   j=1, \ldots , N ,  \\
		\end{aligned}
	\end{equation}
	where 
	\be \label{defQj}
	Q_j=Q_{\theta_j} ,\quad  \theta_j = {2\pi \, \over N} \, (j-1)  \quad j=1, \ldots , N.
	\ee
	Here $Q_\theta$ is the matrix of rotation of angle $\theta$, as defined in \eqref{rotation-matrix}.

	The rotational speed $\alpha$ will be chosen in terms of   $r$, $h$ and $\ve$. For the moment we assume it is uniformly bounded as $\ve \to 0$, namely  there exists $\alpha_0 >0$ independent of $\ve$ such that
	\begin{equation}\label{alpha}
		\alpha = \alpha_\ve  (r,h) ,  \quad 	 |\alpha | \leq \alpha_0
	\end{equation}
	for all $\ve >0$ small.
	
	\medskip

	For any $\ve >0$ and $\mu >0$, let $\Psi_{ \ve \mu}$ be given by \eqref{defpsi2}. We define
	$$
	\psi_1(x)=\Psi_{\ve \mu} \left(M^{-1}(x-P_1)\right), \quad P_1 = (R_\ve ,0),
	$$
	and 
	\be \label{psi0}
	\psi_0(x)=\sum_{j=1}^N \,  \,  \,  \psi_j (x), 
	\quad {\mbox {where}} \quad \psi_j(x)=\psi_1(Q_j^{-1}x).
	\ee
	We recall that the definition of the matrix $M$ is given in \eqref{matrixdef}.
	The function $\psi_0$ will be the principal part of the solution we are constructing. Its definition depends on $\ve$ and $\mu$. We will make the following assumption on $\mu$: 
	We assume that
	\be \label{defR}
	\begin{aligned}
		\delta & \log |\log \ve | < |\log \mu |< \delta^{-1} \log |\log \ve |,
	\end{aligned}
	\ee
	for some $\delta \in (0,1)$  fixed, independent of $\ve $. In particular, $\ve \mu \to 0$ as $\ve \to 0$.

	\medskip
	The function $\psi_0$ is clearly invariant under rotation of ${2\pi \over N},$ that is 
	$$
	\psi_0 (Q_{2\pi \over N} x ) = \psi_0 (x), \quad \forall x \in \R^2.
	$$
	We define
	\be \label{Psi0}
	\Psi_0 (z) = \psi_0 (P_1 + Mz ), \quad x= P_1 + Mz.
	\ee
	A direct computation gives
	$$
	\Psi_0(z)=\Psi_{ \ve \mu}(z)+\sum_{j=2}^N \Psi_{ \ve \mu}(M_j^{-1}(P_1-P_j+Mz))$$
	where
	\be \label{defMj}
	M_j=Q_jM.
	\ee
	If $j=1$, the matrix $M_1$ coincides with the matrix $M$ in \eqref{matrixdef}.
	We also note that 
	\begin{align*}
		\psi_0(x)=\Psi_0(z)=
		&\sum_{j=1}^N \Psi_{ \ve \mu}(M_j^{-1}(P_1-P_j+Mz)), \quad z=M^{-1}(x-P_1).
	\end{align*}
	For $i \not= 1$, by taking $\hat z=M_i^{-1}(x-P_i)$, we have 
	$M_i \hat z+P_i=M z+P_1,$ and consequently we can write $\psi_0(x)=\sum_{j=1}^N \Psi_{ \ve \mu}(M_j^{-1}(P_i-P_j+M_i\hat z))$ so that
	$$
	\psi_0(x)
	=\Psi_{ \ve \mu}(\hat z)+\sum_{j\neq i }^N \Psi_{ \ve \mu}(M_j^{-1}(P_i-P_j+M_i\hat z)), 
	$$
	where $\hat z=M_i^{-1}(x-P_i)$.
	
	\medskip
	Another property of the function $\Psi_0 (z)$ in \eqref{Psi0} is its even-ness with respect to $z_2$.
	\begin{prop}\label{even-ness}
		We have that \begin{eqnarray}\label{symmetryPsi} \Psi_0(z_1,z_2)=\Psi_0(z_1,-z_2)\end{eqnarray}
		for $\Psi_0$ defined as in \eqref{Psi0}.
	\end{prop}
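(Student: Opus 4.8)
The plan is to push the reflection $z\mapsto(z_1,-z_2)$ backward through the two changes of variables defining $\Psi_0$ in \eqref{Psi0}, reducing \eqref{symmetryPsi} to a reflection symmetry of $\psi_0$ itself, and then to combine the explicit form of the single building block $\Psi_{\ve\mu}$ with the dihedral symmetry of the polygon $\{P_1,\dots,P_N\}$. Write $S=\mathrm{diag}(1,-1)$ for the reflection. The elementary linear-algebra facts that drive everything are: $S$ commutes with the diagonal matrix $M$ of \eqref{matrixdef}, $S$ fixes $P_1=(R_\ve,0)$, and $S$ conjugates rotations by $SQ_\theta S=Q_{-\theta}$. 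Using the first two, with $x=P_1+Mz$ one gets $P_1+M(Sz)=P_1+S(x-P_1)=Sx$, so by \eqref{Psi0} the identity \eqref{symmetryPsi} is equivalent to $\psi_0(Sx)=\psi_0(x)$ for all $x\in\R^2$; this is the statement I would actually prove.

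Next I would check that the profile $\Psi_{\ve\mu}$ of \eqref{defpsi2} is even in $z_2$: the factor $\Gamma_{\ve\mu}$ is radial, the polynomial factor $1+c_1z_1+c_2|z|^2$ depends on $z_2$ only through $|z|^2$, and $H_{1\ve}(z)=h_1(|z|)\cos 3\theta$ with $z=|z|e^{i\theta}$ is unchanged under $\theta\mapsto-\theta$. Together with $SM^{-1}=M^{-1}S$ and $SP_1=P_1$ this gives immediately that the single-vortex profile $\psi_1(x)=\Psi_{\ve\mu}(M^{-1}(x-P_1))$ is $S$-invariant, i.e. $\psi_1(Sx)=\psi_1(x)$.

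Then I would use the conjugation identity to get $Q_j^{-1}S=SQ_j$ and $SP_j=R_\ve Q_{-\theta_j}(1,0)=P_{\sigma(j)}$, where $\sigma$ is the involution of $\{1,\dots,N\}$ fixing $1$ and sending $j\mapsto N+2-j$ for $j\ge2$, determined by $-\theta_j\equiv\theta_{\sigma(j)}\pmod{2\pi}$; in particular $Q_j=Q_{\sigma(j)}^{-1}$. Hence, recalling $\psi_j(x)=\psi_1(Q_j^{-1}x)$ from \eqref{psi0},
\[
\psi_j(Sx)=\psi_1(Q_j^{-1}Sx)=\psi_1(SQ_jx)=\psi_1(Q_jx)=\psi_1(Q_{\sigma(j)}^{-1}x)=\psi_{\sigma(j)}(x),
\]
where the third equality uses the $S$-invariance of $\psi_1$. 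Summing over $j$ and using that $\sigma$ is a bijection yields $\psi_0(Sx)=\sum_j\psi_{\sigma(j)}(x)=\psi_0(x)$, which is \eqref{symmetryPsi}.

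The whole argument is bookkeeping rather than analysis; the only place that genuinely needs attention is checking that the anisotropic matrix $M$ does not break the symmetry. This works precisely because the reflection axis is the $x_1$-axis, which is an axis of the ellipse associated with $M$, so that $S$ and $M$ commute and, at the same time, $S$ permutes the polygon vertices (one of them, $P_1$, lying on that axis); had the polygon been rotated so that no vertex lay on the $x_1$-axis, the naive reflection would no longer be a symmetry of the configuration.
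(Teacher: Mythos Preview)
Your proof is correct and essentially the same as the paper's: both rely on the evenness of the single-vortex profile $\Psi_{\ve\mu}$ in its second variable, the commutation of the reflection $S=\mathrm{diag}(1,-1)$ with $M$, and the fact that $S$ permutes the polygon vertices via $j\mapsto N+2-j$. The only cosmetic difference is that you first pull the statement back to the $x$-variable and prove $\psi_0(Sx)=\psi_0(x)$, whereas the paper stays in the $z$-variable and manipulates the summands $\Psi_{\ve\mu}(M_j^{-1}(P_1-P_j+Mz))$ directly; the term-by-term matching is identical in both arguments.
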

	
	\begin{proof}  See Appendix \ref{AppeA}. \end{proof}

	\subsection{Global definition for the approximate stream function.}\label{sub3}

	The definition of the stream function of $N$-helical filaments as given by \eqref{psi0} looks at main order as a superposition of stream functions $\psi_j$ associated to each helical filament. This is a good expression for the stream function in regions close to the vortices $P_j$. Under our assumption \eqref{points}, the points $P_j$ are approaching the origin $(0,0)$ at a rate  $\sqrt{|\log \ve |}^{-1}$, when $\ve \to 0$. It is thus natural to multiply $\psi_0$ in \eqref{psi0} by a cut-off function 
	$$
	\eta_0 (|x|) 
	\sum_{j=1}^N \,  \,  \,  \psi_j (x) 
	$$
	where $\eta_0 $ a fixed smooth function such that
	\begin{equation}\label{defeta}
		\eta_0(s) = 1 , \quad {\mbox {for}} \quad s \leq {1 \over 2}, \quad \eta_0(s) = 0  , \quad {\mbox {for}} \quad s \geq 1.
	\end{equation}

	\medskip
	\begin{remark}
		A key property of the operator $L$ is that
		it is rotational invariant: for any $\theta$
		\be \label{rotinv}
		(L\psi )(x)=(L \tilde \psi) (Q_\theta x),\quad \mbox{where} \quad \psi (x) = \tilde \psi (Q_\theta x).
		\ee
		This property  gives an analogous version of Propositions \ref{prop01} and \ref{prop02} when the function is expressed with respect to the change of variables around each $P_j$ given by 
		\be \label{changej}
		x- P_j= M_jz=Q_j\,M\, z,
		\ee
		instead of \eqref{matrixdef}.
	\end{remark}
	
	\begin{prop}\label{24}
		Let $j$ be fixed in $\{ 1, \ldots , N\}$ as defined in \eqref{psi0}. Then
		$$
		L (\psi_j ) (x)= L_0 (\Psi_{\ve \mu} ) ( M_j^{-1} (x-P_j))
		$$
		where $L_0$ is as in \eqref{Lz}. Besides formula \eqref{ee10} is still valid using the change of variables  \eqref{changej} instead of \eqref{matrixdef}.
	\end{prop}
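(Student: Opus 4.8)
The plan is to deduce the statement from Propositions \ref{prop01} and \ref{prop02} via the rotational invariance \eqref{rotinv}, the whole argument being a bookkeeping of the rotations $Q_j$ and the shears $M$, $M_j$. First I would record the two elementary identities that drive the reduction: since $Q_1$ is the identity, $P_1=(R_\ve,0)$ is of the admissible form $(R,0)$ with $R=R_\ve$; and for general $j$ one has $P_j=R_\ve Q_j(1,0)=Q_jP_1$ together with $M_j=Q_jM$, hence $M_j^{-1}=M^{-1}Q_j^{-1}$.

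Starting from the definition $\psi_j(x)=\psi_1(Q_j^{-1}x)$ in \eqref{psi0}, I would apply \eqref{rotinv} with $\tilde\psi=\psi_1$ and $\theta=-\theta_j$, so that $Q_\theta=Q_j^{-1}$, to get $(L\psi_j)(x)=(L\psi_1)(Q_j^{-1}x)$. Next I would apply Proposition \ref{prop01} to $\psi_1$: the function $\psi_1$ expressed in the variable $z=M^{-1}(x-P_1)$ is by construction exactly $\Psi_{\ve\mu}$, so $(L\psi_1)(x)=(L_0\Psi_{\ve\mu})(M^{-1}(x-P_1))$, where the coefficients of $B$ carry $R=R_\ve$. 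Composing the two displays gives $(L\psi_j)(x)=(L_0\Psi_{\ve\mu})\big(M^{-1}(Q_j^{-1}x-P_1)\big)$, and a one-line computation using $M_j^{-1}=M^{-1}Q_j^{-1}$ and $Q_j^{-1}P_j=P_1$ shows $M^{-1}(Q_j^{-1}x-P_1)=M_j^{-1}(x-P_j)$. This is precisely the asserted identity.

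For the second assertion I would simply observe that \eqref{ee10} is the expansion of $L_0(\Psi_{\ve\mu})(z)$ supplied by Proposition \ref{prop02}, with $R=R_\ve$ in every coefficient — the same for all $j$, since each $P_j$ lies at distance $R_\ve$ from the origin — and evaluate it at $z=M_j^{-1}(x-P_j)$; the admissible range $|z|\le\delta$ corresponds to $x$ lying in a fixed small neighborhood of $P_j$, so the uniform bound $|E(z)|\le C$ transfers unchanged.

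I do not anticipate a real obstacle: the only point needing care is checking that composing the rotation $Q_j^{-1}$ with the change of variable attached to $P_1$ reproduces exactly the change of variable \eqref{changej} attached to $P_j$, i.e. the identity $M^{-1}(Q_j^{-1}x-P_1)=M_j^{-1}(x-P_j)$, and that the hypothesis of Proposition \ref{prop01} that the base point lie on the positive $x_1$-axis is indeed restored after rotating.
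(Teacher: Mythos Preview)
Your proof is correct and follows essentially the same approach as the paper's: apply the rotational invariance \eqref{rotinv} to reduce $L(\psi_j)$ to $L(\psi_1)$ evaluated at $Q_j^{-1}x$, invoke Proposition~\ref{prop01} for $\psi_1$, and then simplify $M^{-1}(Q_j^{-1}x-P_1)=M_j^{-1}(x-P_j)$ using $M_j=Q_jM$ and $P_j=Q_jP_1$. The paper's proof is just the same three-step chain written more tersely, and your remark on the second assertion (that \eqref{ee10} is simply Proposition~\ref{prop02} read at the new variable) is exactly what is intended.
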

	\begin{proof}  We use \eqref{rotinv}, \eqref{points} and \eqref{psi0} to ger
		\begin{align*}
			L(\psi_j) (x) &= L(\psi_1) (Q_j^{-1} x ) = (L_0 \Psi_{\ve \mu} ) ( M^{-1}
			(Q_j^{-1} x - P_1)) \\
			&=
			(L_0 \Psi_{\ve \mu} ) ( M_j^{-1}
			( x - P_j)).
		\end{align*}
	\end{proof} 
	
	With the result of the above Proposition at hand, using \eqref{ee10} we get that
	\begin{align*}
		L\left( \eta_0 \sum_{j=1}^N \,  \, 
		\psi_j  \right)&=
		\eta_0   \, \,   \left( \Delta \Gamma_{\ve \mu}  + {4 R (3h^2+ R^2) \over h(h^2+ R^2)^{3\over 2} }  { \ve^2 \mu^2 \, z_1 \over (\ve^2 \mu^2 + |z|^2 )^2}  \right) \\
		&+\eta_0\sum_{j=2}^N \,\, \,   \left( \frac{8\ve^2 \mu^2}{(\ve^2 \mu^2+|M_j^{-1}Mz-M_j^{-1}(P_j-P_1)|^2 )^2} \right. \\
		&+ \left. {4 R (3h^2+ R^2) \over h(h^2+ R^2)^{3\over 2} }  { \ve^2 \mu^2 \, [M_j^{-1}Mz-M_j^{-1}(P_j-P_1)]_1 \over (\ve^2 \mu^2 + |M_j^{-1}M z-M_j^{-1}(P_j-P_1)|^2 )^2}  \right) \\
		&+g(x) , \quad {\mbox {with}} \quad z=M^{-1}(x-P_1)
	\end{align*}
	where $M_j$ is the matrix defined in \eqref{defMj}. The function $g$ is
	$$
	g(x)= \eta_0 \sum_{j=1}^N \, 
	\, \,   E(M_j^{-1}(x-P_j))  +
	\sum_{j=1}^N 
	\left[  \, L( \eta_{0} \psi_j ) - \eta_0 L(\psi_j)   \right],
	$$
	where $E$ is as in \eqref{ee10}. It is easy to see that
	$g$ has compact support and satisfies
	$$
	\| g (x) \|_{L^\infty (\R^2 ) } \leq C
	$$
	for some positive constant $C$.
	\medskip
	
	Note that $g$ is invariant under rotation of angle ${2\pi \over N}$. Indeed more generally, we have
	\be \label{dihege}
	g(x')=g(x)\quad\mbox{where}\quad x'=Q_ix,\quad\mbox{for any $i$,}
	\ee
	where $Q_i$ is defined in \eqref{defQj}. For a proof see  Proposition \ref{sumerrorrot}.
	Furthermore, arguing as in the proof of Proposition \ref{even-ness}, we can prove that the function $\tilde g(z)= g (P_1 + Mz)$ is even in $z_2$, that is
	\be \label{tildeg}
	\tilde g(z_1, z_2) = \tilde g (z_1 , -z_2).
	\ee
	We postpone the proof of \eqref{tildeg} to the Appendix \ref{AppeA}.
	
	\medskip
	
	We modify  $\eta_{0} \sum_{j=1}^N \,  \,  \,  \psi_j $ adding a term which is defined globally in the entire space $\R^2$ and which cancels the error term $g(x)$. 
	Let $H_{2\ve} (x)$ solve
	\be \label{H2e}
	L (H_{2\ve}) + g = 0 , \quad {\mbox {in}} \quad \R^2.
	\ee
	For a smooth function $h(x)$ satisfying the decay condition
	$$
	\| h\|_{\bar \nu}\, :=\, \sup_{x\in \R^2} (1+|x|)^{\bar \nu}|h(x)|\, <\,+\infty \, ,
	$$
	for some ${\bar \nu} >2$, there exists a solution $\psi(x)$ to problem 
	$$
	L(\psi) + g = 0 , \quad {\mbox {in}} \quad \R^2
	$$
	which is of class $C^{1,\beta}(\R^2)$ for any $0<\beta<1$,  and defines a linear operator $\psi = {\mathcal T}^o (g) $ of $h$
	and satisfies 
	\[
	|\psi(x)| \,\le \,     C{ \| g\|_{\bar \nu}} (1+ |x|^2),
	\]
	for some positive constant $C$. The proof of this fact can be found in Lemma \ref{prop2}. 
	
	\medskip
	Using this result we obtain that the solution $H_{2\ve}$ to \eqref{H2e} satisfies 
	\begin{equation}
		\label{bH2}
		H_{2\ve} (Q_{2\pi \over N} x ) = H_{2\ve} (x) , \quad | H_{2\ve } (x) | \leq C (1+ |x|^2).
	\end{equation}
	Since $H_{2\ve}$ solves a linear equation, we use \eqref{tildeg} to conclude that the function
	\be \label{simH2ep}
	z \to H_{2\ve} (P_1 + Mz)
	\ee
	is even with respect to the variable $z_2$.
	
	Besides, observe that such  solution is given up to the addition of a constant.
	We define the  function $H_{2\ve} (x)$  to be the one which furthermore satisfies
	\be \label{H2epP}
	H_{2\ve} ( P_1 ) = 0.
	\ee
	With this is mind, we get to the definition of a globally defined approximate stream function for $N$-helical filaments
	\be \label{psi*}
	\psi_* (x) = \eta_0 (x) \sum_{j=1}^N \, 
	\,  \, \psi_j (x) +H_{2\ve}(x),
	\ee
	so that for $z=M^{-1}(x-P_1)$,
	\begin{equation} \label{psi01}
		\begin{aligned}
			L\left( \psi_*   \right)& = 
			\eta_0   \, \,   \left( \Delta \Gamma_{\ve \mu}  + {4 R_\ve (3h^2+ R_\ve^2) \over h(h^2+ R_\ve^2)^{3\over 2} }  { \ve^2 \mu^2 \, z_1 \over (\ve^2 \mu^2 + |z|^2 )^2}  \right) \\
			&+\eta_0\sum_{j\neq i}^N \,\, \,   \left( \frac{8\ve^2 \mu^2}{(\ve^2 \mu^2+|M_j^{-1}M z-M_j^{-1}(P_j-P_1)|^2 )^2} \right. \\
			&+ \left. {4 R_\ve (3h^2+ R_\ve^2) \over h(h^2+ R_\ve^2)^{3\over 2} }  { \ve^2 \mu^2 \, [M_j^{-1}Mz-M_j^{-1}(P_j-P_1)]_1 \over (\ve^2 \mu^2 + |M_j^{-1}Mz-M_j^{-1}(P_j-P_1)|^2 )^2}  \right) .
		\end{aligned}
	\end{equation}
	We recall that the definition of $\eta_0$ is given in \eqref{defeta} and that $R_\ve = {r \over \sqrt{|\log \ve |}}$.
	
	\medskip
	The function $\psi_*$ is a smooth function defined in the whole $\R^2$ and
	satisfies
	\be \label{simpsi*}
	z \to \psi_* (P_1 + Mz)
	\ee
	is even with respect to $z_2$, as consequence of Proposition \ref{even-ness} and \eqref{simH2ep}.
	We make a final adjustment to $\psi_*$ with a precise choice of the scaling parameter $\mu$ in terms of the points $P_i$, assuming $P_i$ have the form \eqref{points} and satisfy \eqref{defR}.
	
	\subsection{Choice of $\mu$}\label{sub4}
	
	We take $\mu$ given by the relation
	\be\label{defmu}
	2\log \mu= \sum_{j\neq i}^N \Psi_{ \ve \mu}(M_j^{-1}(P_i-P_j))+H_{2\ve}(P_i)  - {\alpha \over 2} \, |\log \ve | \, R_\ve^2 
	\ee
	This definition is well-posed in the sense that it is  independent of the choice of $i$. To see this, we observe that  
	\[
	\sum_{j\neq i}^N \Psi_{ \ve \mu}(M_j^{-1}(P_i-P_j))=\sum_{j\neq k}^N \Psi_{ \ve \mu}(M_j^{-1}(P_k-P_j))
	\]
	for any $i$ and $k$, see Appendix \ref{AppeA} (Proposition \ref{proppsi}). Besides, $H_{2\ve}$ 
	is invariant under rotation of angle ${2\pi \over N}$ (see \eqref{bH2})
	and hence  $H_{2\ve}(P_i)=H_{2\ve}(P_k)$, for all $i$, $k$.

	A direct inspection to \eqref{defmu} using \eqref{alpha} gives
	$$
	\log\mu^2 = 2(N-1)\log(|\log \ve|) +  \overline m (\alpha ), 
	$$
	where $\overline m$ is a smooth function which is uniformly bounded 
	for values of $\alpha$ satisfying \eqref{alpha}, as $\ve \to 0$.
	Hence $\mu$ satisfies the bound in \eqref{defR}.
	The above choice of $\mu$ is to eliminate the terms of size $\log |\log \ve|$ in the expansion of $\psi_*(x) -{\alpha \over 2} |\log \ve| \, |x|^2$ in \eqref{psi0} when computed around a vortex $P_i$. Thanks to the dihedral symmetry, it is enough to check this around the first point $P_1$.

	\begin{prop}\label{ppp}
		Assume the validity of \eqref{points}-\eqref{defmu} and define $r_0= \min_{\ell \not= j} {\mbox {dist}} (r e^{\theta_j}, r e^{i\theta_\ell})$. Let $\delta >0$ such that $\delta <{r_0 \over 4}$.
		For 
		$$
		y= M^{-1} \left({x-P_1 \over \ve \mu}\right), \quad |y|<{\delta \over \ve \mu \sqrt{|\log \ve |}}
		$$
		it holds
		\be \label{psi*exp}
		\begin{aligned}
			\psi_* (x )&= \Gamma(y)(1+c_{1} \ve \mu y_1+c_{2} \ve^2 \mu^2|y|^2)  - 4 \log \ve - 2 \log \mu  + {\alpha \over 2} \, |\log \ve | \, R_\ve^2 \\
			&+ \ve \mu y_1 \Biggl[ - 4 c_1 \log \ve \mu +\pp_1 H_{2\ve}(P_1) \\
			& \quad \quad -{2 (N-1) \over r} \sqrt{|\log \ve |} (1+ {\log |\log \ve | \over |\log \ve |} \, {\mathcal B} (\alpha)  ) \Biggl]  \\
			& 
			+O\, |\log \ve | \, \ve^2 \, \mu^2 \, |y|^2 \, {\mathcal B} (\alpha),
		\end{aligned}
		\ee
		where $\Gamma$ is defined in \eqref{defGamma0}, ${\mathcal B}$ denotes a generic smooth function, which is uniformly bounded with its derivative, for $\alpha$ in the range defined by \eqref{alpha}.  We recall that $\psi_*$ is defined in \eqref{psi*},  $c_1$, $c_2$ are defined in \eqref{defpsi2},  $P_1 = R_\ve (1,0)$, $R_\ve = {r \over \sqrt{|\log \ve |}}.$  
	\end{prop}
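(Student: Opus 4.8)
The plan is to expand $\psi_*$ around the first vortex $P_1$, on the set $|y|<\delta/(\ve\mu\sqrt{|\log\ve|})$, i.e. $|z|<(\delta/r)R_\ve$ with $z=\ve\mu y=M^{-1}(x-P_1)$; by the dihedral invariance \eqref{dihege} it suffices to treat $P_1$. First I would localize: since $|x-P_1|=|Mz|\le|z|<(\delta/r)R_\ve$ and $R_\ve\to0$, for $\ve$ small $|x|<1/2$, so $\eta_0\equiv1$ and, by \eqref{psi*}, $\psi_*(x)=\psi_1(x)+\sum_{j\neq 1}\psi_j(x)+H_{2\ve}(x)$ on this set. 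Moreover $\delta<r_0/4$ keeps $x$ at distance $\ge\tfrac34\min_{j\neq 1}|P_1-P_j|$ from every $P_j$ with $j\neq 1$, so each such $\psi_j$ is smooth there with $|D^k\psi_j|\le C_kR_\ve^{-k}$. The proof then splits into an expansion of the three pieces.

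\textbf{Self term and $H_{2\ve}$.} For $\psi_1(x)=\Psi_{\ve\mu}(z)$ I substitute $z=\ve\mu y$ in \eqref{defpsi2} and use $\Gamma_{\ve\mu}(\ve\mu y)=\Gamma(y)-4\log\ve-4\log\mu$, obtaining
\[
\psi_1(x)=\Gamma(y)\bigl(1+c_1\ve\mu y_1+c_2\ve^2\mu^2|y|^2\bigr)-4\log\ve-4\log\mu-4c_1\log(\ve\mu)\,\ve\mu y_1+R_1,
\]
where $R_1$ collects $(-4\log\ve-4\log\mu)\,c_2\ve^2\mu^2|y|^2$ — which is $O(|\log\ve|\ve^2\mu^2|y|^2)$ since $c_2=O(|\log\ve|^{-1})$ — and the term $\tfrac{4R_\ve^3}{h(h^2+R_\ve^2)^{3/2}}H_{1\ve}(\ve\mu y)$, whose prefactor is $O(|\log\ve|^{-3/2})$ and which, using $|H_{1\ve}(z)|\le C|z|$ from Proposition \ref{prop02} and $|z|\le(\delta/r)R_\ve$, is of lower order and absorbed into $O(|\log\ve|\ve^2\mu^2|y|^2{\mathcal B}(\alpha))$. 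For $H_{2\ve}$: by \eqref{H2epP}, $H_{2\ve}(P_1)=0$; by \eqref{simH2ep}, $z\mapsto H_{2\ve}(P_1+Mz)$ is even in $z_2$, so its $z_2$–derivative vanishes at $z=0$; and since $g$ is smooth near $P_1$ (there $\eta_0\equiv1$ and $g$ reduces to $\sum_jE(M_j^{-1}(x-P_j))$, with $E$ smooth by \eqref{ee10}), interior elliptic estimates for \eqref{H2e} at the scale $R_\ve$ give $\|D^2H_{2\ve}\|_{L^\infty}=O(R_\ve^{-2})=O(|\log\ve|)$ near $P_1$, hence $H_{2\ve}(x)=\partial_1H_{2\ve}(P_1)\,\ve\mu y_1+O(|\log\ve|\ve^2\mu^2|y|^2)$ after using $M_{11}=h/\sqrt{h^2+R_\ve^2}=1+O(R_\ve^2)$.

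\textbf{Pair interactions.} Taylor-expanding each smooth $\psi_j$, $j\neq 1$, at $P_1$,
\[
\sum_{j\neq 1}\psi_j(x)=\sum_{j\neq 1}\Psi_{\ve\mu}\bigl(M_j^{-1}(P_1-P_j)\bigr)+\Bigl(\sum_{j\neq 1}\nabla_x\psi_j(P_1)\Bigr)\cdot(x-P_1)+O\bigl(R_\ve^{-2}|z|^2\bigr).
\]
By \eqref{defmu} and $H_{2\ve}(P_1)=0$, the constant term equals exactly $2\log\mu+\tfrac\alpha2|\log\ve|R_\ve^2$; this is the mechanism that removes the $\log|\log\ve|$-sized terms, since $\psi_1$ contributes $-4\log\mu$ and the interactions $+2\log\mu$, leaving $-2\log\mu$. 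For the gradient, $\nabla_x\psi_j(P_1)=M_j^{-T}\nabla\Psi_{\ve\mu}(w_j)$ with $w_j=M_j^{-1}(P_1-P_j)$, $|w_j|\sim R_\ve\gg\ve\mu$; writing $\nabla\Psi_{\ve\mu}(w)=-\tfrac{4w}{\ve^2\mu^2+|w|^2}\bigl(1+O(R_\ve^2)\bigr)+\Gamma_{\ve\mu}(w)\bigl(c_1(1,0)+2c_2w\bigr)+O(|\log\ve|^{-3/2})$ and $M_j=Q_jM$ with $M=I+O(R_\ve^2)$, the leading part is $-4\sum_{j\neq 1}\tfrac{P_1-P_j}{|P_1-P_j|^2}$. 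Since $P_j=R_\ve e^{2\pi i(j-1)/N}$ and $\sum_{k=1}^{N-1}(1-e^{2\pi ik/N})^{-1}=\tfrac{N-1}{2}$, this sum equals $\tfrac{N-1}{2R_\ve}(1,0)=\tfrac{(N-1)\sqrt{|\log\ve|}}{2r}(1,0)$, and pairing with $(x-P_1)=Mz$, $(1,0)\cdot Mz=M_{11}\ve\mu y_1$, gives the linear term $-\tfrac{2(N-1)}{r}\sqrt{|\log\ve|}\,\ve\mu y_1$. The factor $1+\tfrac{\log|\log\ve|}{|\log\ve|}{\mathcal B}(\alpha)$ comes from the secondary pieces: $\Gamma_{\ve\mu}(w_j)\,c_1(1,0)$, with $\Gamma_{\ve\mu}(w_j)=2\log|\log\ve|+O(1)$, $c_1=O(|\log\ve|^{-1/2})$ and $\sum_{j\neq 1}Q_j(1,0)=-(1,0)$, produces a relative correction of order $\log|\log\ve|/|\log\ve|$, while the contributions of $M_{11}-1$, of $c_2$ and of $H_{1\ve}$ are $O(|\log\ve|^{-1})$ or smaller; the $\alpha$-dependence enters only through $\mu$, and $\log\mu^2=2(N-1)\log|\log\ve|+\overline m(\alpha)$ with $\overline m$ bounded together with its derivative for $\alpha$ as in \eqref{alpha}, so all corrections have the advertised form ${\mathcal B}(\alpha)$. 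The second-order remainder is $O(R_\ve^{-2}|z|^2)=O(|\log\ve|\ve^2\mu^2|y|^2)$.

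\textbf{Assembly and main obstacle.} Adding the three expansions yields \eqref{psi*exp}: the constants combine to $-4\log\ve-2\log\mu+\tfrac\alpha2|\log\ve|R_\ve^2$; the $\Gamma(y)$-block comes entirely from $\psi_1$; the $\ve\mu y_1$-bracket collects $-4c_1\log(\ve\mu)$ from $\psi_1$, $\partial_1H_{2\ve}(P_1)$ from $H_{2\ve}$ and $-\tfrac{2(N-1)}{r}\sqrt{|\log\ve|}(1+\tfrac{\log|\log\ve|}{|\log\ve|}{\mathcal B}(\alpha))$ from the pair interactions; everything else is $O(|\log\ve|\ve^2\mu^2|y|^2{\mathcal B}(\alpha))$. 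I expect the pair-interaction step to be the main obstacle: one has to pull out the exact coefficient $-\tfrac{2(N-1)}{r}\sqrt{|\log\ve|}$ via the regular-polygon identity while simultaneously checking that the $\log|\log\ve|$-sized contributions — present both in the constant of $\Gamma_{\ve\mu}(\ve\mu y)$ and in $\Gamma_{\ve\mu}(w_j)$ at the interaction scale $R_\ve$ — collapse to $-2\log\mu$ precisely because of the choice \eqref{defmu} of $\mu$, and that the $M_j$-versus-$Q_j$ distortion together with the $c_1,c_2,H_{1\ve}$ corrections in $\Psi_{\ve\mu}$ land exactly at order $\log|\log\ve|/|\log\ve|$ with the stated bounded dependence on $\alpha$.
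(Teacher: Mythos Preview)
Your approach is essentially the same as the paper's: localize so that $\eta_0\equiv 1$, expand the self term $\psi_1$ via $\Gamma_{\ve\mu}(\ve\mu y)=\Gamma(y)-4\log(\ve\mu)$, Taylor-expand the interaction terms $\psi_j$ ($j\neq 1$) at $P_1$, use the choice \eqref{defmu} of $\mu$ to identify the zeroth-order interaction sum as $2\log\mu+\tfrac{\alpha}{2}|\log\ve|R_\ve^2$, and extract the linear-in-$y$ coefficient from the gradients. The one substantive difference is in how the gradient sum is computed: you invoke the regular-polygon identity $\sum_{k=1}^{N-1}(1-e^{2\pi i k/N})^{-1}=(N-1)/2$ directly to get $\sum_{j\neq1}\frac{P_1-P_j}{|P_1-P_j|^2}=\frac{N-1}{2R_\ve}(1,0)$, whereas the paper writes $\nabla\Psi_{\ve\mu}(M_j^{-1}(P_1-P_j))\cdot M_j^{-1}My$ out explicitly, splits it as $\hat a_j+\hat b_j+\hat c_j$ (coming respectively from $\nabla\Gamma_{\ve\mu}$, from the $(c_1,c_2)$-factor, and from $H_{1\ve}$), and sums each piece by hand using the exact expressions \eqref{facts} for $M_j^{-1}(P_1-P_j)$ and $M_j^{-1}My$. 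Both routes give the same leading coefficient $-\tfrac{2(N-1)}{r}\sqrt{|\log\ve|}$ with relative correction $\log|\log\ve|/|\log\ve|$; your route is slicker at leading order, the paper's is more transparent about the subleading pieces.

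The one place your argument is thinner than the paper's is the vanishing of the linear $y_2$ coefficient. Your polygon identity shows the \emph{leading} interaction gradient lies in the $(1,0)$ direction, and you invoke even-ness in $z_2$ for $H_{2\ve}$, but you do not explain why the secondary interaction contributions (the $\Gamma_{\ve\mu}(w_j)c_1$, $c_2$ and $H_{1\ve}$ pieces, whose $j$-dependence you absorb into ${\mathcal B}(\alpha)$) produce no $y_2$ component either; a $y_2$ term of order $\ve\mu\,\log|\log\ve|/\sqrt{|\log\ve|}$ cannot be absorbed into the quadratic remainder for small $|y|$. The paper closes this by the pairing $\theta_j\leftrightarrow 2\pi-\theta_j$, showing $\sum_{j\neq1}\hat a_{j2}=\sum_{j\neq1}\hat b_{j2}=\sum_{j\neq1}\hat c_{j2}=0$ because each summand has the form $\sin\theta_j\,f(\cos\theta_j)$. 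You could alternatively, and more economically, just invoke Proposition~\ref{even-ness} together with \eqref{simH2ep}: $z\mapsto\psi_*(P_1+Mz)$ is even in $z_2$, so its linear $y_2$ coefficient vanishes identically.
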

	
	\begin{proof}
		For $z=M^{-1} (x-P_1)$ we consider the small region 
		$
		|z| < {\delta \over \sqrt{|\log \ve|}}.
		$
		Using \eqref{psi*} and \eqref{defpsi2}, 
		with $y= {z \over \ve \mu}$ in the region  $|y|<{\delta \over \ve \mu \sqrt{|\log \ve |}}$, we obtain 
		\be \label{expan11}
		\begin{aligned}
			&\psi_* (x )= \Gamma (y)(1+c_{1} \ve \mu y_1+c_{2} \ve^2 \mu^2|y|^2)  - 4 \log \ve \mu \\
			&- 4 \log \ve \mu \left( c_{1} y_1 \, \ve \mu + c_{2} \ve^2 \mu^2 |y|^2 \right) +  \frac{4R_\ve ^3}{h (h^2+R_\ve^2)^{3/2}}H_{1\ve}  (\ve \mu y)\\
			& + H_{2\ve} (\ve \mu M y+P_1)
			+\sum_{j\geq 2}^N \Psi_{ \ve \mu}(M_j^{-1}(P_1-P_j+M\ve\mu y)).
		\end{aligned}
		\ee
		We recall that  $M$, $M_j$ are defined in \eqref{matrixdef} and \eqref{defMj}.
		
		In the region $|y|<{\delta \over \ve \mu \sqrt{|\log \ve |}}$  and under the assumptions \eqref{points}-\eqref{defR}, we have the validity of the following expansions
		\[
		\begin{aligned}
			H_{2\ve}(P_1+\ve\mu y) &=H_{2\ve}(P_1) +\ve\mu y\cdot\nabla H_{2\ve}(P_1) +O(\ve^2\mu^2|y|^2)\\
			H_{1\ve }(\ve\mu y)&=O(\ve^3\mu^3|y|^3), \quad {\mbox {as}} \quad \ve \to 0.
		\end{aligned}
		\]
		Replacing this information and \eqref{defmu} in \eqref{expan11} we get
		\be \label{expan111}
		\begin{aligned}
			&\psi_* (x )= \Gamma (y)(1+c_{1} \ve \mu y_1+c_{2} \ve^2 \mu^2|y|^2)  - 4 \log \ve - 2 \log \mu  + {\alpha \over 2} \, |\log \ve | \, R_\ve^2 \\
			&- 4 \log \ve \mu \left( c_{1} y_1 \, \ve \mu + c_{2} \ve^2 \mu^2 |y|^2 \right)  + \ve\mu y\cdot\nabla H_{2\ve}(P_1) +O(\ve^2\mu^2|y|^2)\\
			&   +\sum_{j\geq 2}^N \left(\Psi_{ \ve \mu}(M_j^{-1}(P_1-P_j+M\ve\mu y)) - \Psi_{ \ve \mu}(M_j^{-1}(P_1-P_j)) \right),
		\end{aligned}
		\ee
		where we have used that $H_{2\ve} (P_1)=0$, see \eqref{H2epP}.
		
		Next we analyze the last term in \eqref{expan111}. We first Taylor expand
		\[
		\begin{aligned}
			&\Psi_{ \ve \mu}(M_j^{-1}(P_1-P_j+M_i\ve\mu y))- \Psi_{ \ve \mu}(M_j^{-1}(P_1-P_j))\\
			&= \ve \mu \nabla \Psi_{ \ve \mu}(M_j^{-1}(P_1-P_j))\cdot M_j^{-1} M y +O(|\log \ve |^{-1} \, \ve^2 \mu^2 |y|^2) ,
		\end{aligned}
		\]
		for $|y|<{\delta \over \ve\mu \sqrt{|\log \ve |}}.$
		A direct computation gives 
		\be\label{expan1b}
		\begin{aligned}
			\nabla \Psi_{ \ve \mu}&(M_j^{-1}(P_1-P_j))\cdot M_j^{-1}M y= \hat a_j+\hat b_j +\hat c_j \quad {\mbox {where}} \\
			\hat a_j &=-\frac{4 M_j^{-1}(P_1-P_j)\cdot M_j^{-1}M y}{(\ve^2\mu^2+|(M_j^{-1}(P_1-P_j)|^2)} \times \\ & \times \Biggl(1
			+c_1(M_j^{-1}(P_1-P_j))_1+c_2|M_j^{-1}(P_1-P_j)|^2\Biggr) \\
			\hat b_j &=\log\frac{8}{(\ve^2\mu^2+|M_j^{-1}(P_1-P_j)|^2)^2} \times \\
			& \times \Biggl(c_1[M_j^{-1}M y]_1
			+2c_2M_j^{-1}(P_1-P_j) \, \cdot \, M_j^{-1} M y\Biggr) \\
			\hat c_j & =\frac{4R_\ve ^3}{h (h^2+R_\ve^2)^{3/2}}\nabla H_{1\ve } (M_j^{-1}(P_1-P_j))\cdot M_j^{-1} M y.
		\end{aligned}
		\ee
		Substituting these expressions in \eqref{expan111}, we obtain
		\begin{align*}
			&\psi_* (x )= \Gamma (y)(1+c_{1} \ve \mu y_1+c_{2} \ve^2 \mu^2|y|^2)  - 4 \log \ve - 2 \log \mu  + {\alpha \over 2} \, |\log \ve | \, R_\ve^2 \\
			&- 4 \log \ve \mu \left( c_{1} y_1 \, \ve \mu + c_{2} \ve^2 \mu^2 |y|^2 \right)  + \ve\mu y_1 \pp_1 H_{2\ve}(P_1) 
			+\ve \mu  \sum_{j\neq 1}^N \left( \hat a_j + \hat b_j + \hat c_j  \right) +O(\ve^2 \mu^2 |y|^2) ,
		\end{align*}
		where we have used \eqref{bH2}.
		We next analyze the term $\sum_{j\neq 1}^N \left( \hat a_j + \hat b_j + \hat c_j  \right)$, see \eqref{expan1b}. Recall that $M_j = Q_j \, M$, with $Q_j$ as in \eqref{defQj}.
		Since
		\begin{align*}
			M&= I+ \left(\left( 1+ {R^2_\ve \over h^2}\right)^{-{1\over 2}}  -1 \right) \, \hat B, \quad  
			M^{-1}= I+ \left( \left( 1+ {R^2_\ve \over h^2}\right)^{{1\over 2}}  -1 \right) \, \hat B
		\end{align*}
		with $\hat B=\left( \begin{matrix} 1 & 0\\
			0  & 0\end{matrix}\right),$ we get $M_j= Q_j + \left(\left( 1+ {R^2_\ve \over h^2}\right)^{-{1\over 2}}  -1 \right)  \,  \left( \begin{matrix} \cos \theta_j & 0\\
			\sin \theta_j  & 0\end{matrix}\right),$
		\begin{align*}
			M_j^{-1} &= Q_j^{-1} + \left( \left( 1+ {R^2_\ve \over h^2}\right)^{{1\over 2}}  -1 \right) \, \left( \begin{matrix} \cos \theta_j & -\sin \theta_j\\
				0  & 0\end{matrix}\right) \\
			{\mbox {and for any }} i,j &\\ 
			M_j^{-1} M_i= M^{-1} &Q_j^{-1} Q_i M = Q_j^{-1} Q_i + \left( \left( 1+ {R^2_\ve \over h^2}\right)^{{1\over 2}}  -1 \right) \, \left( \begin{matrix} 0 &  0 \\
				\sin (\theta_i - \theta_j)  & 0\end{matrix}\right) \\
			&
			+\left(\left( 1+ {R^2_\ve \over h^2}\right)^{-{1\over 2}}  -1 \right)  \,  \left( \begin{matrix} 0 &  - \sin (\theta_i - \theta_j) \\
				0  & 0\end{matrix}\right) .
		\end{align*}
		These facts give
		\be \label{facts}
		\begin{aligned}
			M_j^{-1} (P_1 - P_j) &= R_\ve \left( \begin{matrix}  \left( 1+ {R^2_\ve \over h^2}\right)^{{1\over 2}}  \, ( \cos \theta_j - 1 ) \\
				-\sin \theta_j \end{matrix}\right) \\
			M_j^{-1} M y &= \left( \begin{matrix} \cos \theta_j y_1 + \left( 1+ {R^2_\ve \over h^2}\right)^{-{1\over 2}} \sin \theta_j y_2 \\
				-  \left( 1+ {R^2_\ve \over h^2}\right)^{{1\over 2}} \, \sin \theta_j y_1 + \cos \theta_j y_2\end{matrix}\right)\\
			M_j^{-1} (P_1 - P_j) \cdot   M_j^{-1} M y &= R_\ve \left[ \left( 1+ {R^2_\ve \over h^2}\right)^{{1\over 2}} (1- \cos \theta_j )y _1 - \sin \theta_j y_2\right].
		\end{aligned}
		\ee
		We can then find the expressions for $\hat a_j$, $\hat b_j$ and $\hat c_j$ in \eqref{expan1b}. Using \eqref{facts} we get
		\[
		\begin{aligned}
			\hat a_j &=y_1 \hat a_{j1} + y_2 \hat a_{j2} \\
			\hat a_{j1}&= -\frac{2}{R_\ve} \, \frac{( 1+ {R^2_\ve \over h^2})^{{1\over 2}} (1-\cos \theta_j ) \,  }{ (1-\cos \theta_j ) + {R_\ve^2 h^2 \over 2} (1-\cos \theta_j)^2 + {\ve^2 \mu^2 \over 2} } \, \tilde a_j   \\
			\hat a_{j2}&= \frac{2}{R_\ve} \, \frac{ \sin \theta_j \, }{ (1-\cos \theta_j ) + {R_\ve^2 h^2 \over 2} (1-\cos \theta_j)^2 + {\ve^2 \mu^2 \over 2} } \, \tilde a_j , \quad {\mbox {where }}\\
			\tilde a_j= 1
			+c_1 \, R_\ve \,  &\left( 1+ {R^2_\ve \over h^2}\right)^{{1\over 2}}  \, ( \cos \theta_j - 1 ) 
			+c_2\, R_\ve^2 \left(2 (1-\cos \theta_j) + {R_\ve^2 \over h^2} (1-\cos \theta_j)^2 \right).
		\end{aligned}
		\]
		For $\hat b_j$, we get
		\[
		\begin{aligned}
			\hat b_j &=  y_1 \hat b_{j1} + y_2 \hat b_{j2} \\
			\hat b_{j1}&= \left( c_1 \cos \theta_j + 2 c_2 R_\ve ( 1+ {R^2_\ve \over h^2})^{{1\over 2}} (1- \cos \theta_j ) \right) \, \tilde b_j\\
			\hat b_{j2} & = \left( c_1  ( 1+ {R^2_\ve \over h^2})^{-{1\over 2}} \sin \theta_j -2c_2 R_\ve \sin \theta_j  \right) \, \tilde b_j\\
			\tilde b_j&= \log\frac{8}{(R_\ve^2 [ 2(1-\cos \theta_j) +{R_\ve^2 \over h^2} (1-\cos \theta_j^2) ]+\ve^2\mu^2)^2}.
		\end{aligned}
		\]
		We use the form of the function $H_{1\ve}$ to treat  $\hat c_j$. We recall from Proposition \ref{prop02} that $H_{1\ve}$ has the form $H_{1\ve } (y) = h_1 (|y|) \, \cos 3 \theta$, where $y = |y| e^{i\theta}.$ Hence
		$$
		\nabla H_{1\ve } (y) = \left( 
		\begin{matrix}  h_1' {y_1 \over |y|} \cos 3 \theta + 3h {y_2 \over |y|^2}  \sin 3 \theta\\
			h_1' {y_2 \over |y|} \cos 3 \theta - 3h {y_1 \over |y|^2}  \sin 3 \theta
		\end{matrix}\right).
		$$
		Writing
		$$
		\nabla H_{1\ve } (M_j^{-1}(P_1-P_j))= \left( 
		\begin{matrix}  \Theta_j^1 (\theta_j) \\ \Theta_j^2 (\theta_j)
		\end{matrix}\right),
		$$
		a direct inspection gives that
		\be \label{Thetaj}
		\Theta_j^1 (-\theta_j) = \Theta_j^1 (\theta_j) , \quad 
		\Theta_j^2 (-\theta_j) =- \Theta_j^2 (\theta_j).
		\ee
		We then get that
		\[
		\begin{aligned}
			\hat c_j & = y_1 \hat c_{j1} + y_2 \hat c_{j2} \\
			\hat c_{j1}&= \frac{4R_\ve ^3}{h (h^2+R_\ve^2)^{3/2}}\left( \Theta_j^1 (\theta_j) \cos \theta_j -  \Theta_j^2 (\theta_j) ( 1+ {R^2_\ve \over h^2})^{{1\over 2}} \, \sin \theta_j  \right)\\
			\hat c_{j2}& = \frac{4R_\ve ^3}{h (h^2+R_\ve^2)^{3/2}}
			\left( \Theta_j^1 (\theta_j ) ( 1+ {R^2_\ve \over h^2})^{-{1\over 2}} \sin \theta_j  + \Theta_j^2 (\theta_j) \cos \theta_j   \right).
		\end{aligned}
		\]
		We claim that
		\be \label{ajbjcj0}
		\sum_{j=2}^N \hat a_{j2} =\sum_{j=2}^N \hat b_{j2} =\sum_{j=2}^N \hat c_{j2} =0 .
		\ee
		To prove this, we use that  $\hat a_{j2}$ and $\hat b_{j2}$ have the form $\sin \theta_j \, f(\cos \theta_j )$, for some explicit function $f$. 
		Indeed, if $N$ is even, we can write $N=2n$, and then $\sin \theta_{n+1} =\sin \left( {2\pi \over N} n\right) =0$ and if $N$ is odd we set $N=2n+1$, therefore 
		\begin{align*}
			\sum_{j=2}^N \hat a_{j2} &= \left( \sum_{j=2}^{N-n} + \sum_{j=n+2}^{N} \right)   \sin \theta_j \, f(\cos \theta_j ) \\ 
			&=  \sum_{j=2}^{N-n}   \left(  \sin \theta_j \, f(\cos \theta_j )
			+ \sin \theta_{n+j} \, f(\cos \theta_{n+j} ) \right)\\
			&=  \sum_{j=2}^{N-n}    \left(  \sin \theta_j \, f(\cos \theta_j )
			+ \sin (2\pi - \theta_{(N-n) -j+2} ) \, f(\cos (2\pi - \theta_{(N-n) -j+2} )  ) \right)\\
			&= \sum_{j=2}^{N-n}    \left(  \sin \theta_j \, f(\cos \theta_j )
			+ \sin (2\pi - \theta_{j} ) \, f(\cos (2\pi - \theta_{j} )  ) \right)\\
			&= \sum_{j=2}^{N-n}    \left(  \sin \theta_j \,
			+ \sin (2\pi - \theta_{j} ) \,  \right) \,  f(\cos \theta_j )=0.
		\end{align*}
		A similar argument shows the validity of \eqref{ajbjcj0} for $\sum_{j=1}^N \hat b_{j2}=0.$ In order to prove \eqref{ajbjcj0} for $c_{j2}$, we use the properties in \eqref{Thetaj} for the functions $\Theta_j^1$ and $\Theta_j^2$ and we argue as before.
		From \eqref{ajbjcj0}, we conclude that
		\[
		\sum_{j=1}^N \hat a_j = y_1 \, \sum_{j=2}^N \hat a_{j1}, \quad \sum_{j=1}^N \hat b_j = y_1 \, \sum_{j=2}^N \hat b_{j1}, \quad \sum_{j=1}^N \hat c_j = y_1 \, \sum_{j=2}^N \hat c_{j1}.
		\]
		Under the assumptions on the points $P_j$ in \eqref{points}-\eqref{defR}, recalling that $R_\ve ={r \over \sqrt{|\log \ve |}} $ we recognize that
		\begin{align*}
			\sum_{j=2}^N \hat a_{j1}&= - {2 (N-1) \over r} \sqrt{|\log \ve |} \left( 1+ O( {1\over |\log \ve |} ) \right) \\
			\sum_{j=2}^N \hat b_{j1}&= O({\log |\log \ve| \over \sqrt{|\log \ve |}} )\\
			\sum_{j=2}^N \hat c_{j1}&= O({1\over |\log \ve|^{3 \over 2}}) 
		\end{align*}
		where ${\mathcal B}$ denotes a generic smooth function, which is uniformly bounded with its derivative, for $r \in (r_0 , r_0^{-1})$, as $\ve \to 0$. 
	\end{proof}
	
	\begin{remark}\label{rmk1}
		Under the assumptions of Proposition \ref{ppp}, we obtain, for 
		$$
		y= M^{-1} \left({x-P_1 \over \ve \mu}\right), \quad |y|<{\delta \over \ve \mu \sqrt{|\log \ve |}}, \quad 0<\delta <{r_0 \over 4}
		$$
		that
		\be \label{psi*exp-alpha}
		\begin{aligned}
			\psi_* (x )-{\alpha \over 2} \, |\log \ve | \, |x|^2 &= \Gamma(y)(1+c_{1} \ve \mu y_1+c_{2} \ve^2 \mu^2|y|^2)  - 4 \log \ve - 2 \log \mu \\
			&+ \ve \mu y_1 \Biggl[ - 4 c_1 \log \ve \mu +\pp_1 H_{2\ve}(P_1) -\alpha {h R_\ve \over \sqrt{h^2 + R_\ve^2}} \, |\log \ve | \\
			& \quad \quad \quad -{2 (N-1) \over r} \, \sqrt{|\log \ve |}  \, (1+ {\log |\log \ve | \over |\log \ve |} {\mathcal B} (\alpha ) ) \Biggl]  \\
			& 
			+\, |\log \ve | \, \ve^2 \, \mu^2 \, |y|^2 \, {\mathcal B} (\alpha )  ,
		\end{aligned}
		\ee
		where $\Gamma$ is defined in \eqref{defGamma0}, ${\mathcal B}$ denotes a generic smooth function, which is uniformly bounded with its derivative, for $\alpha$ in the range defined by the relation \eqref{alpha}.
		
		Expansion \eqref{psi*exp-alpha} uses 
		$$
		\begin{aligned}
			|x|^2&= R_\ve^2 + 2 \, \ve \,\mu \, P_1 \cdot My + |My|^2 \\
			&=R_\ve^2 + 2 \, \ve \,\mu \, {  h \, R_\ve \over  \sqrt{h^2+  R_\ve^2} }  \, y_1 + \, \ve^2 \,\mu^2 \, \left( {  h^2 \over  h^2+  R_\ve^2}  y_1^2 + \,  y_2^2 \right).
		\end{aligned}
		$$

	\end{remark}

	\section{Choice of the non-linearity and construction of the first approximate solution} \label{sec4}

	In this section we define a  nonlinearity $F$ in problem \eqref{P}
	with the property that the vorticity $W$, defined as
	$$
	W(x) = F (\Psi -{\alpha \over 2} \, |\log \ve | \, |x|^2)$$
	satisfies 
	$$
	W(x) \sim 8 \pi \sum_{j=1}^N  \delta_{P_j}, \quad {\mbox {as}} \quad \ve \to 0.
	$$
	We recall problem \eqref{P} here
	$$
	\nabla\cdot (K \nabla \Psi)+F(\Psi -{\alpha \over 2} \, |\log \ve | \, |x|^2)=0
	$$
	Take $\delta >0$ to be a fixed positive number and consider the inner region around $P_i$ to be given by 
	\be \label{innreg1}
	|M_i^{-1}(x-P_i)| <{\delta \over {\sqrt{|\log \ve|}}.
	}
	\ee
	We take $\delta< \min \{{r_0 \over 4} , {1\over 2} \}$, where $r_0$ is defined in the statement of Proposition \ref{ppp}, so that for $x \in \R^2 $ satisfying  \eqref{innreg1} then $\eta_0 (x) =1$. In addition, we consider
	\be \label{varinner}
	x-P_i=M_iz,\quad  z=\ve\mu y.
	\ee

	\subsection{Choice of the non-linearity $F$}
	
	We let $F$
	\be \label{defF}
	F(s ) =  \ve^{2}  \eta ( s)\, e^s,
	\ee
	where
	$\eta$ is a smooth cut-off functions defined as follows.

	\medskip
	
	Consider the boundary of the inner region around $P_i$, as defined in \eqref{innreg1}. Using the variable $y$ in \eqref{varinner}, this boundary is defined by $|y|=\delta/({\mu\ve\sqrt{|\log\ve|}})$. On this boundary we have
	\begin{align*}
		&
		\psi_* (x ) -{\alpha \over 2} \, |\log \ve | |x|^2
		= 2 \log|\log \ve|+2\log\mu+\log 8- 4\log\delta  \,  (1+ s (\delta ) ) +o (\ve )
	\end{align*}
	where $s(\delta)$ is a smooth bounded function of $\delta $ and $o(\ve ) \to 0 $ is respect to $\ve\to 0$. This follows from  formula \eqref{psi*exp}.
	We choose the cutoff function $\eta$ such that 
	\begin{equation}\label{cutoff}
		\begin{aligned}
			\eta(s)&=1, \quad {\mbox {for}} \quad  s\geq 2 \log|\log \ve|+2\log\mu +\log8+2d_{\ve} \\
			\eta(s)&=0 , \quad {\mbox {for}} \quad s\leq 2 \log|\log \ve|+2\log\mu +\log8 +d_{\ve}
		\end{aligned}
	\end{equation}
	for suitable $d_{\ve}=-4\log\delta  \,  (1+ s (\delta ) ) +o (\ve )$ as $\ve \to 0$ so that 
	$$
	\eta\left( \psi_* (x ) -{\alpha \over 2} \, |\log \ve | |x|^2\right)=1\quad \mbox{if}\quad |M_i^{-1}(x-P_i)|\leq \frac{\delta^2}{\sqrt{|\log \ve|}}
	$$
	for all $i:=1,\ldots,N$
	and 
	\be \label{cutout}
	\eta\left( \psi_* (x ) -{\alpha \over 2} \, |\log \ve | |x|^2\right)=0\quad \mbox{if}\quad  x \in \cap_{i=1}^N \left\{ |M_i^{-1}(x-P_i)|\geq \frac{\delta}{\sqrt{|\log \ve|}} \right\}.
	\ee
	Here $\delta$ is independent of $\ve$ as in \eqref{innreg1} and can be taken smaller if needed. Notice that the support of the function $F (\, \psi_* -{\alpha \over 2} \, |\log \ve | |x|^2 \, )$ is contained well inside the unitary ball of radius $1$ in $\R^2$. This fact will be crucial in the sequel.
	
	\subsection{Estimate of the error function}
	Let us define  the error function to be
	\be \label{error}
	S[\psi ] (x) = L (\psi) + \ve^{2}\eta \left(\psi -{\alpha \over 2} \, |\log \ve | |x|^2 \right)\, e^{\psi -{\alpha \over 2} \, |\log \ve | |x|^2}
	\ee
	A solution to \eqref{P} would correspond to a smooth function $\psi$ such that
	\[
	S[\psi ] (x) = 0 \quad x\in \R^2.
	\]
	Our purpose is to estimate 
	\[
	S[\psi_* ] (x) \quad {\mbox {for}} \quad  x \in \R^2
	\]
	where $\psi_*$ is the approximate stream function for the $N$-helical filaments introduced in \eqref{psi*}.

	\medskip
	\begin{prop} \label{prop4}
		Assume the validity of \eqref{points}-\eqref{alpha}-\eqref{defmu}. Let  $0<\delta< \min \{{r_0 \over 4} , {1\over 2} \}$, where $r_0$ is defined in the statement of Proposition \ref{ppp},  and consider the cut-off function $\eta$ as defined in \eqref{cutoff}. There exist $\bar \nu >2$, $\sigma \in (0,1)$, $a \in ({3\over 4} ,1)$ and  $C>0$ such that the following estimates hold true.
		
		In the region $\{ x \in \R^2 : |M_i^{-1}(x-P_i)|>{\delta \over \sqrt{|\log \ve|}}, \quad \forall i =1, \ldots , N\} $  we have
		\be
		\label{SPsi02}
		|S(\psi_*) (x)|\leq C\frac{\ve^{1+\sigma}}{1+|x|^{\bar \nu }} 
		\ee
		as $\ve \to 0$.
		For $i=1, \ldots , N$, in each region defined by 
		\[
		|M_i^{-1}(x-P_i)|<{\delta \over \sqrt{|\log \ve|}}, 
		\]
		we have
		\be
		\label{SPsi01a}
		\ve^2\mu^2 |S(\psi_*) (x) |\leq C  {  \ve \mu \sqrt{|\log \ve |} \over  1 + |y|^{2+a}} 
		\ee
		as $\ve \to 0$. 
		In \eqref{SPsi01a}, $y$ is the scaled variable in the inner regions:
		$$x-P_i = M_i z, \quad z = \ve \mu y.$$
		Besides, in the region given by $|M^{-1}(x-P_1)|<{\delta^2 \over \sqrt{|\log \ve|}}$  we have
		\be \label{ee2n}
		\begin{aligned}
			S(\psi_* ) (x) &=\frac{1}{\ve^2\mu^2}U (y)  \Biggl[ \ve \mu y_1 \left(c_{1} \Gamma (y) + {\mathcal A} (\alpha) \right) \\
			&  + \ve^2 \mu^2 c_{2} |y|^2 \Gamma (y) +O\left( \log(|\log \ve |)|y|^2 \ve^2\mu^2
			\right) \Biggl] 
			+O(\ve^2\mu^2|\log\ve|^2),
		\end{aligned}
		\ee
		for $y=M^{-1}(x-P_1)/\ve\mu$, where
		\be \label{defA}
		{\mathcal A} (\alpha) =2\sqrt{|\log \ve |} \left( {  r \over  h^2}  - { (N-1) \over r} - {\alpha r \over 2} \right)+
		\frac{\log|\log\ve|}{\sqrt{|\log\ve|}}
		\, {\bf Y}_1 (\alpha).
		\ee
		Here ${\bf Y}_1(\alpha)$ denotes an explicit smooth function,  uniformly bounded  as $\ve \to 0$  for values of $\alpha$     satisfying  \eqref{alpha}. 
	\end{prop}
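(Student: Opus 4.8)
The plan is to write
$S[\psi_*]=L(\psi_*)+\ve^{2}\,\eta\big(\psi_*-\tfrac{\alpha}{2}|\log\ve|\,|x|^{2}\big)\,e^{\,\psi_*-\frac{\alpha}{2}|\log\ve|\,|x|^{2}}$
and to estimate each of the three regions directly from the two facts already available: the exact formula \eqref{psi01} for $L(\psi_*)$ and the pointwise expansion \eqref{psi*exp-alpha} of $\psi_*-\tfrac{\alpha}{2}|\log\ve|\,|x|^{2}$ near a vortex. By the rotational invariance \eqref{rotinv}, Proposition \ref{24} and the dihedral symmetry it is enough to carry out the two inner estimates around $P_1$, the outer estimate being symmetric; throughout one uses that, by \eqref{defmu}--\eqref{defR}, $\mu$ has polynomial size in $|\log\ve|$, so $\ve^{2}\mu^{2}|\log\ve|^{k}=\ve^{2-o(1)}$ for each fixed $k$. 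In the outer region $\{\,|M_i^{-1}(x-P_i)|>\delta/\sqrt{|\log\ve|}\text{ for all }i\,\}$ one has $\eta(\psi_*-\tfrac\alpha2|\log\ve||x|^2)=0$ by \eqref{cutout}, so $S(\psi_*)=L(\psi_*)$; since every summand of \eqref{psi01} carries a factor $\ve^{2}\mu^{2}/(\ve^{2}\mu^{2}+|\cdot|^{2})^{2}$ with $|\cdot|$ bounded below by a fixed multiple of $\delta/\sqrt{|\log\ve|}$ there and is multiplied by $\eta_0$, we get $|S(\psi_*)|\le C\ve^{2}\mu^{2}|\log\ve|^{2}$ for $|x|\le1$ and $\equiv0$ otherwise, which gives \eqref{SPsi02} for any $\sigma\in(0,1)$ and any $\bar\nu>2$.

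For the inner estimates I would pass to $y=M^{-1}(x-P_1)/(\ve\mu)$. From \eqref{psi01} the $j=1$ term $\Delta\Gamma_{\ve\mu}$ equals $-\tfrac1{\ve^{2}\mu^{2}}U(y)$ with $U(y)=8(1+|y|^{2})^{-2}$, while by \eqref{psi*exp-alpha} the constant part $-4\log\ve-2\log\mu$ of the exponent produces $\ve^{2}e^{-4\log\ve-2\log\mu}e^{\Gamma(y)(1+\cdots)}=\tfrac1{\ve^{2}\mu^{2}}U(y)(1+\cdots)$: the two leading singularities cancel, and this cancellation is what makes all three inner bounds work. In the transition annulus $\delta^{2}/\sqrt{|\log\ve|}\le|M^{-1}(x-P_1)|<\delta/\sqrt{|\log\ve|}$, where $\eta$ interpolates between $0$ and $1$, it suffices to bound $|L(\psi_*)|$ and the nonlinear term crudely by $C\ve^{2}\mu^{2}|\log\ve|^{2}+C\tfrac1{\ve^{2}\mu^{2}}U(y)$; since $|y|\gtrsim\delta^{2}/(\ve\mu\sqrt{|\log\ve|})\to\infty$ there one has $\tfrac1{\ve^{2}\mu^{2}}U(y)\lesssim\ve^{2}\mu^{2}|\log\ve|^{2}$, so $\ve^{2}\mu^{2}|S(\psi_*)|\lesssim\ve^{4}\mu^{4}|\log\ve|^{2}\le C\,\ve\mu\sqrt{|\log\ve|}\,(1+|y|^{2+a})^{-1}$ because $(\ve\mu\sqrt{|\log\ve|})^{1-a}\to0$; this is \eqref{SPsi01a} in the annulus.

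The core region $|M^{-1}(x-P_1)|<\delta^{2}/\sqrt{|\log\ve|}$, where $\eta\equiv1$, is the heart of the matter. There $\ve\mu|y|\lesssim\delta^{2}/\sqrt{|\log\ve|}$, $|\Gamma(y)|\lesssim\log(2+|y|)\lesssim|\log\ve|$, $c_1=O(|\log\ve|^{-1/2})$, $c_2=O(|\log\ve|^{-1})$ (see \eqref{defpsi2}), so the sub-leading terms of \eqref{psi*exp-alpha} are each $O(\delta^{2})$ and $e^{(\cdots)}=1+(\cdots)+O((\cdots)^{2})$; multiplying by $\ve^{2}$, adding $L(\psi_*)$ from \eqref{psi01}, and cancelling $\mp\tfrac1{\ve^{2}\mu^{2}}U(y)$ leaves exactly \eqref{ee2n}, with $\mathcal A(\alpha)$ equal to the drift bracket of \eqref{psi*exp-alpha} augmented by the coefficient $\tfrac{R_\ve(3h^{2}+R_\ve^{2})}{2h(h^{2}+R_\ve^{2})^{3/2}}$ produced by the $j=1$ middle term of \eqref{psi01}, the cross terms $j\neq1$ of \eqref{psi01} and all quadratic remainders being $O(\ve^{2}\mu^{2}|\log\ve|^{2})$. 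Expanding in $R_\ve=r/\sqrt{|\log\ve|}$ and using $\log(\ve\mu)=-|\log\ve|+(N-1)\log|\log\ve|+O(1)$ from \eqref{defmu} one reads off that the $\sqrt{|\log\ve|}$-part of $\mathcal A(\alpha)$ is $2\sqrt{|\log\ve|}\big(\tfrac r{h^{2}}-\tfrac{N-1}r-\tfrac{\alpha r}2\big)$ — the $\tfrac r{h^{2}}$ coming from $-4c_1\log(\ve\mu)$, the $-\tfrac{N-1}r$ from the term $-\tfrac{2(N-1)}r\sqrt{|\log\ve|}$, and the $-\tfrac{\alpha r}2$ from $-\alpha\tfrac{hR_\ve}{\sqrt{h^{2}+R_\ve^{2}}}|\log\ve|$ — while everything else, including $\partial_1H_{2\ve}(P_1)$ (which is $O(|\log\ve|^{-1/2})$ since $L$ is rotationally invariant, the source $g$ of \eqref{H2e} converges to a radial function as $\ve\to0$, and $P_1\to0$), is of order at most $\tfrac{\log|\log\ve|}{\sqrt{|\log\ve|}}$ and goes into $\mathbf Y_1(\alpha)$; this yields \eqref{defA}, and it is the vanishing of that $\sqrt{|\log\ve|}$-coefficient that forces the choice $\alpha=2(\tfrac1{h^{2}}-\tfrac{N-1}{r^{2}})+o(1)$ of Theorem \ref{teo2}. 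The pointwise bound \eqref{SPsi01a} in the core then follows from \eqref{ee2n} via $U(y)|y_1|\lesssim(1+|y|)^{-3}$ and $U(y)|y|^{2}\lesssim(1+|y|)^{-2}$, together with $(\ve\mu\sqrt{|\log\ve|})^{1-a}\to0$ to absorb the additive error $\ve^{2}\mu^{2}\cdot\ve^{2}\mu^{2}|\log\ve|^{2}$.

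The hard part will be the bookkeeping in the core: one has to check that every term of $\ve^{2}e^{\psi_*-\frac\alpha2|\log\ve||x|^2}$ and of $L(\psi_*)$ not displayed in \eqref{ee2n} is genuinely absorbed by the stated errors $O(\log|\log\ve|\,|y|^{2}\ve^{2}\mu^{2})$ and $O(\ve^{2}\mu^{2}|\log\ve|^{2})$, \emph{uniformly} for $|y|$ as large as $\delta^{2}/(\ve\mu\sqrt{|\log\ve|})$. This is precisely the step where the choice \eqref{defmu} of $\mu$ — engineered to kill the $\log|\log\ve|$-size constant in the exponent — and the window \eqref{defR} are exploited, and where the admissible exponents $\sigma\in(0,1)$, $a\in(3/4,1)$, $\bar\nu>2$ get pinned down (the lower bound on $a$ being the one needed by the inner linear theory later in the paper). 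A secondary technical point will be establishing $\partial_1H_{2\ve}(P_1)=O(|\log\ve|^{-1/2})$, for which one combines the rotational invariance of $L$, the near-radial structure of $g$, and the estimate on $H_{2\ve}$ supplied by Lemma \ref{prop2}.
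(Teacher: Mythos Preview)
Your approach is correct and essentially identical to the paper's: you split into outer, transition annulus, and core regions, invoke \eqref{cutout} to reduce the outer estimate to \eqref{psi01}, and in the core exponentiate \eqref{psi*exp-alpha}, cancel the leading $\pm\tfrac{1}{\ve^2\mu^2}U(y)$ against the $\Delta\Gamma_{\ve\mu}$ term of \eqref{psi01}, and read off $\mathcal A(\alpha)$ from the drift bracket plus the $j=1$ middle term of \eqref{psi01}. One small correction: your heuristic for $\partial_1 H_{2\ve}(P_1)=O(|\log\ve|^{-1/2})$ via ``$g$ converges to a radial function'' is not quite the right mechanism; the clean argument is that the dihedral invariance \eqref{bH2} forces $\nabla H_{2\ve}(0)=0$ for any $N\ge 2$, and then $|\partial_1 H_{2\ve}(P_1)|\le \|\nabla^2 H_{2\ve}\|_{L^\infty(B_1)}\,|P_1|=O(R_\ve)$ by elliptic regularity applied to \eqref{H2e} with $\|g\|_{L^\infty}\le C$. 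The paper itself simply absorbs this term into $\mathbf Y_1(\alpha)$ without comment, so your flagging it is appropriate.
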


	\medskip
	\begin{proof}
		Let $\delta >0$ be fixed small, and consider first the inner region around $P_1$ defined as 
		$$
		\left\{ x \in \R^2 \, : \, |M^{-1}(x-P_1)|\leq \frac{\delta^2}{\sqrt{|\log \ve|}} \right\}.
		$$
		By dihedral symmetry, it is enough to show the validity of estimate \eqref{SPsi01a} in this region to get the equivalent estimates in inner regions around any other point $P_i$, 
		for $i \in \{ 2, \ldots , N \}$.
		We split this region into two parts
		$$ |M^{-1}(x-P_1)|\leq \frac{\delta^2}{\sqrt{|\log \ve|}} ,  \quad \frac{\delta^2}{\sqrt{|\log \ve|}}\leq |M^{-1}(x-P_1)|\leq \frac{\delta}{\sqrt{|\log \ve|}}.$$
		We recall the definition of $M$ in \eqref{matrixdef}. It corresponds to $M_1$ in \eqref{defMj}.
		
		\medskip
		Assume first that $|M^{-1}(x-P_1)|\leq \frac{\delta^2}{\sqrt{|\log \ve|}} $. 
		Thanks to the result of Proposition \ref{ppp} and  the expansion in \eqref{psi*exp}, we get that the non-linear term in the expression of $S(\psi_*)$ becomes
		\be \label{rhsf}
		\begin{aligned}
			& \ve^{2}  \eta \left(\psi_*(x) -{\alpha \over 2} |\log \ve | |x|^2 \right) e^{\psi_* (x) -{\alpha \over 2} |\log \ve | |x|^2}
			= \frac{1}{\ve^2\mu^2}U (y)  \Biggl[ 1+ \ve \mu y_1 \left(c_{1} \Gamma (y) + {\mathcal A}_{1} (\alpha) \right)   \\
			& \qquad  \qquad \quad  \quad \quad \: + \ve^2 \mu^2 c_{2} |y|^2 \Gamma (y) +O\left( |\log \ve | \, |y|^2 \ve^2\mu^2
			\right) \Biggl]
			&   \\
			&{\mbox {with}}\\
			&{\mathcal A}_{1} (\alpha): =4 c_1 |\log \ve | \,  - 4c_{1} \log\mu - 
			{ 2 (N-1) \over R_\ve }- \alpha {h R_\ve \over \sqrt{h^2 + R_\ve^2}} \, |\log \ve | \\
			& \quad \quad \quad + \frac{\log |\log \ve |}{\sqrt{|\log\ve|}} {\bf Y}_1(\alpha) 
		\end{aligned}
		\ee
		where ${\bf Y}_1(\alpha)$  is a smooth function,  uniformly bounded  as $\ve \to 0$  for values of $\alpha$ satisfying \eqref{alpha}. 
		
		If we insert the definition of   $c_{1} $ as given in \eqref{defpsi2} and recall that $R_\ve = {r \over \sqrt{|\log \ve |}}$, we see that $\mathcal A_{1} (\alpha)$  
		\be\label{aa1}
		\begin{aligned}
			{\mathcal A}_{1} (\alpha) 
			&= \sqrt{|\log \ve |} \left( { 2 h r \over  \sqrt{(h^2+ {r^2 \over |\log \ve |} )^3} } - {2 (N-1) \over r} - \alpha {h r \over \sqrt{h^2 + {r^2 \over |\log \ve |}}}  \right) +
			\frac{\log|\log\ve|}{\sqrt{|\log\ve|}}
			\, {\bf Y}_1 (\alpha)\\
			&= 2\sqrt{|\log \ve |} \left( {  r \over  h^2}  - { (N-1) \over r} - {\alpha r \over 2} \right)+
			\frac{\log|\log\ve|}{\sqrt{|\log\ve|}}
			\, {\bf Y}_1 (\alpha)
		\end{aligned}
		\ee
		where again ${\bf Y}_1(\alpha)$ denotes an explicit smooth function,  uniformly bounded  as $\ve \to 0$  for values of $\alpha$ satisfying \eqref{alpha}. To get \eqref{aa1} we also use the definition and expansion of $\mu$ as given in \eqref{defmu}.

		\medskip
		A direct computation shows that in the region  $\frac{\delta^2}{\sqrt{|\log\ve|}}\leq |M^{-1}(x-P_1)|\leq \frac{\delta}{\sqrt{|\log\ve|}}$ we have
		\begin{align*}
			\ve^{2} \eta \left( \psi_* - {\alpha \over 2} |\log \ve | |x|^2 \right) e^{\psi_* - {\alpha \over 2} |\log \ve | |x|^2}
			=O(\ve^2\mu^2|\log\ve|^2 ). 
		\end{align*}
		On the other hand we recall from \eqref{psi01} we obtain that
		\be \label{ee1}
		\begin{aligned}
			L(\psi_*)&
			=-\frac {1}{\ve^2\mu^2}\left[U(y)-{4 R_\ve (3h^2+ R_\ve^2) \over h(h^2+ R_\ve^2)^{3\over 2} }  { \ve \mu \, y_1 \over (1 + |y|^2 )^2}\right]+O(\ve^2\mu^2|\log\ve|^2)
		\end{aligned}
		\ee
		in the region  $|M^{-1} (x-P_1) |<\delta/\sqrt{|\log \ve|}$, 
		where $y$  is the variable $y=M^{-1}(x-P_1)/\ve\mu$.

		\medskip
		Combining \eqref{rhsf} and \eqref{ee1}, we conclude that in the region
		$|M^{-1} (x-P_1) |<\delta^2/{\sqrt{|\log \ve|}}$ we have
		\begin{align*}
			S(\psi_* ) (x) &=\frac{1}{\ve^2\mu^2}U (y)  \Biggl[ \ve \mu y_1 \left(c_{1} \Gamma (y)+{ R_\ve (3h^2+ R_\ve^2) \over 2h(h^2+ R_\ve^2)^{3\over 2} }  + {\mathcal A}_{1} (\alpha) \right) \\
			&  + \ve^2 \mu^2 c_{2} |y|^2 \Gamma (y) +O\left( \log(|\log \ve |)|y|^2 \ve^2\mu^2
			\right) \Biggl] 
			+O(\ve^2\mu^2|\log\ve|^2)
		\end{align*}
		with $\mu $ given by \eqref{defmu}, and $S$ by \eqref{error}. This gives \eqref{ee2n}-\eqref{defA} and \eqref{SPsi01a}.
		
		\medskip
		Besides
		we get that in the region  $\frac{\delta^2}{\sqrt{|\log\ve|}}\leq |M^{-1}(x-P_1)|\leq \frac{\delta}{\sqrt{|\log\ve|}}$ we have
		$$
		S(\psi_* ) (x)
		= O(\ve^2\mu^2|\log\ve|^2).
		$$
		
		\medskip
		Let us consider now the region defined by
		$$|M_i^{-1}(x-P_i)|>{\delta \over \sqrt{|\log \ve|}}, \quad \forall i.
		$$
		In this outer region
		$
		S[\psi_* ] (x) = L (\psi_*).$
		A direct inspection of \eqref{psi01} gives
		$$
		\left|L\left(\psi_* \right)\right|\leq C\frac{\ve^2\mu^2}{1+|x|^{\bar \nu}}
		$$
		for some $C$ independent of $\ve$, $\bar \nu>2$ and $\mu$ is defined in \eqref{defmu}. Combining all previous results, we conclude the proof of the proposition.
	\end{proof}

	\begin{remark}\label{rmk}
		Let
		$
		x= P_1 + \ve \mu M y $ where $M$ is defined in \eqref{matrixdef}, and define
		$$
		\tilde S_* (y) = 
		S(\psi_*) (P_1 + \ve \mu My).
		$$
		Then
		$$
		\tilde S_* (y_1 , -y_2) = \tilde S_* (y_1 , y_2).
		$$
		This is consequence of \eqref{simpsi*}, the properties of the operator $L$ and the definitions of the non-linearity $F$.
	\end{remark}

	\medskip
	Estimates \eqref{SPsi02} and \eqref{SPsi01a} will be crucial to continue the inner-outer gluing procedure that leads to an exact solution of \eqref{P}. This is what we discuss next.

	\section{The inner-outer gluing system}\label{quattro}
	
	To prove Theorem \ref{teo2} we will show that problem \eqref{P}
	admits a solution of the form
	$$
	\psi = \psi_* + \psi_\#
	$$
	where $\psi_\#$ is a small function compared to $\psi_*$, whose definition can be found in \eqref{psi*}. Equation \eqref{P} can be rewritten in terms of $\psi_\#$ as in
	\be \label{fin}
	L (\psi_\# ) + F' (\psi_* - {\alpha \over 2} \, |\log \ve |\,  |x|^2 ) [\psi_\# ]+ S[\psi_*] + {\mathcal N}(\psi_\#)=0 \quad {\mbox {in}} \quad \R^2
	\ee
	where  $F$ is in \eqref{defF},
	\be \label{nonl}
	\begin{aligned}
		{\mathcal N} (\psi_\# ) &= F \left(\psi_* + \psi_\# - {\alpha \over 2}\,  |\log \ve | \, |x|^2 \right) - F \left(\psi_* - {\alpha \over 2} \, |\log \ve |\,  |x|^2 \right) \\
		&- F' \left(\psi_* - {\alpha \over 2} \, |\log \ve |\,  |x|^2 \right) [\psi_\# ] 
	\end{aligned}
	\ee
	and
	$$
	S[\psi_*] = L \psi_* + F\left(\psi_* - {\alpha \over 2} \, |\log \ve |\,  |x|^2 \right).
	$$
	We recall that $\psi_*$ is invariant under the dihedral symmetry
	\be\label{dihe}
	\psi_* (Q_{2\pi \over N} x ) = \psi_* (x), \quad \forall x \in \R^2,
	\ee
	and by definition of $L$ and $F$, also $S[\psi_*]$ satisfies \eqref{dihe}.
	This section is devoted to describe the strategy we follow to find $\psi_\#$. In next Section we solve \eqref{fin}.
	
	Let  $\eta$ be a smooth cut-off function such that
	$$
	\eta (z) = 1 \quad {\mbox {if}} \quad |z|  \leq 1  , \quad
	\eta (z) = 0 \quad {\mbox {if}} \quad |z| \geq 2   .
	$$
	Let $\delta >0 $ be given as in Proposition \ref{prop4}.
	Fix $\delta_1 >0$ small with $2 \delta_1 <\delta^2$ and define 
	\be \label{eta1}
	\eta_1 (x)  = \eta \left({\sqrt{|\log \ve |} \, M^{-1} (x-P_1) \over \delta_1 }  \right).
	\ee
	Recall that $M$ is the matrix introduced in \eqref{matrixdef}, $P_1$ is the pointy $P_1 = {r \over \sqrt{|\log \ve |}} \, (1,0)$ as in \eqref{points}-\eqref{defR}.

	We write the  function $\psi_\#$ in the form
	\be \label{ansatz}\begin{aligned}
		\psi_\# (x) &= \sum_{j=1}^N \eta_j (x) \Phi_j (x)   + \varphi (x), \\
		\eta_j (x) &= \eta_1 (Q_j^{-1}  x) , \quad \Phi_j (x) = \Phi (Q_j^{-1}  x)\\
		\Phi (x) &= \phi (y) , \quad  y= {M^{-1} (x-P_1) \over \ve \mu } . \quad 
	\end{aligned}
	\ee
	We recall that $Q_j$ is the matrix of rotation defined in \eqref{defQj}.
	
	
	Notice that $\sum_{j=1}^N \eta_1 (Q_j^{-1}  x) \Phi (Q_j^{-1}  x)$ is invariant under the dihedral symmetry \eqref{dihe}. We shall assume that also $\varphi$ satisfies \eqref{dihe}, that is 
	\be\label{dihevar}
	\varphi (Q_{2\pi \over N} x ) = \varphi (x), \quad \forall x \in \R^2.
	\ee
	
	In Proposition \ref{prop-gluing} we introduce a  system of coupled equations in the functions $\phi$ and $\varphi$ which gives a solution to \eqref{fin}. To state the result, let us write 
	\be \label{defbe}
	\ve^2\mu^2F'\left( \psi_* - {\alpha \over 2} \, |\log \ve |\,  |x|^2\right)=e^{\Gamma (y)}+b_\ve (y)\quad \mbox{with}\quad \Gamma (y) = \log {8 \over (1+ |y|^2)^2}.
	\ee
	Since
	$$
	F' \left(\psi_* - {\alpha \over 2} \, |\log \ve |\,  |x|^2 \right) = \ve^2 \left( \eta' (\psi_* - {\alpha \over 2} \, |\log \ve |\,  |x|^2) + \eta (\psi_* - {\alpha \over 2} \, |\log \ve |\,  |x|^2)  \right) \, e^{\psi_*  - {\alpha \over 2} \, |\log \ve |\,  |x|^2},
	$$
	we can bound $|F' (\psi_* - {\alpha \over 2} \, |\log \ve |\,  |x|^2)| \leq F(\psi_* - {\alpha \over 2} \, |\log \ve |\,  |x|^2)$ and 
	using the estimate \eqref{rhsf}, we get
	that
	\be \label{biest}
	b_\ve (y)=O\left(
	\frac{\ve \mu \sqrt{|\log\ve|}}{1+|y|^{2+a}}\right), \quad {\mbox {if}} \quad |y| <{2 \delta_1 \over \ve \mu \sqrt{|\log \ve |}}
	\ee
	for some $a\in (0,1)$ as in Proposition \ref{prop4}. Note also that $b_\ve$ is even in $y_2$, that is 
	\be \label{beven}
	b_\ve(y_1,y_2)=b_\ve(y_1,-y_2)\quad \mbox{for all}\quad y\in \RR^2.
	\ee
	
	\begin{prop}\label{prop-gluing}
		Assume that $\varphi$ satisfies \eqref{dihe} and that $(\phi, \varphi)$ solve the following system of equations
		\be \label{in}
		\Delta_y \phi + e^\Gamma \phi + B_\ve [\phi] + b_\ve (y) \phi + H(\phi , \varphi,\alpha )=0  , \quad {\mbox{for}} \quad |y| <{\delta_1 \over \ve \mu \sqrt{|\log \ve |}}
		\ee
		\be\label{out}
		\begin{aligned}
			L[\varphi ] &  + \left(1-\sum\limits_{i=1}^N\eta_i \right)\left[   F'( \psi_*  - {\alpha \over 2}  \, |\log \ve |\, |x|^2) \varphi  +  S (\psi_* )  +  {\mathcal N} \left(\sum\limits_{i=1}^N\eta_i \Phi_i + \varphi \right) \right]\\
			&+ \sum\limits_{i=1}^N \left( L[\eta_i \Phi_i ] -\eta_i L[\Phi_i]  \right)    =0 \quad  {\mbox{for}} \quad x \in \R^2 .
		\end{aligned}
		\ee  
		Then the function $\psi_\#$ of the form \eqref{ansatz} solves \eqref{fin}.
		In \eqref{in}, $b_\ve(y)$ is the function introduced in \eqref{defbe}, the operator $B_\ve$ is the operator $B$ in \eqref{Lz} expressed in the variable $y$ introduced in \eqref{ansatz} multiplied by $\ve^2\mu^2$,  and 
		\be \label{in-out-1}
		\begin{aligned}
			H(\phi , \varphi , \alpha) &=  \ve^2 \mu^2  {\mathcal N} \left(\sum\limits_{i=1}^N\eta_i \Phi_i + \vp \right) +  \ve^2 \mu^2 F'(\psi_* - {\alpha \over 2} \, |\log \ve |\, |x|^2)\varphi +  \ve^2 \mu^2 S(\psi_*) .
		\end{aligned}
		\ee
	\end{prop}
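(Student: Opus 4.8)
\emph{Plan.} The statement is really an exact pointwise identity: once $\phi$ and $\varphi$ satisfy \eqref{in} and \eqref{out}, substituting the ansatz \eqref{ansatz} into \eqref{fin} and rearranging produces $0$ everywhere. No estimates enter; the proof is bookkeeping together with the rotational invariance \eqref{rotinv} and the change of variables of Proposition \ref{prop01} (in the rotated form of Proposition \ref{24}).

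\emph{Step 1: expand and use the outer equation.} Write $V:=F'(\psi_*-\tfrac{\alpha}{2}\,|\log\ve|\,|x|^2)$ and $S_*:=S[\psi_*]$. By linearity of $L$ and the splitting $L(\eta_i\Phi_i)=\eta_i L(\Phi_i)+\big(L(\eta_i\Phi_i)-\eta_i L(\Phi_i)\big)$, the left-hand side of \eqref{fin} evaluated at $\psi_\#=\sum_i\eta_i\Phi_i+\varphi$ equals
\[
\sum_i\eta_i\big(L\Phi_i+V\Phi_i\big)\;+\;\Big[L\varphi+\sum_i\big(L(\eta_i\Phi_i)-\eta_i L\Phi_i\big)\Big]\;+\;V\varphi+S_*+\mathcal N(\psi_\#).
\]
By \eqref{out} the bracketed group is exactly $-\big(1-\sum_i\eta_i\big)\big[V\varphi+S_*+\mathcal N(\psi_\#)\big]$; substituting it and cancelling, the whole expression collapses to
\[
\sum_i\eta_i\Big(L\Phi_i+V\Phi_i+V\varphi+S_*+\mathcal N(\psi_\#)\Big).
\]
Hence it suffices to show that this bracket vanishes on $\operatorname{supp}\eta_i$ for every $i$.

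\emph{Step 2: reduce to $i=1$, then pass to the inner variable.} Since $\psi_*$ and $|x|^2$ are invariant under $Q_{2\pi/N}$, so are $V$ and $S_*$ (the latter was noted just before the Proposition); $\varphi$ is invariant by hypothesis \eqref{dihevar}; and $\sum_i\eta_i\Phi_i$ is invariant by construction, so $\psi_\#$ and $\mathcal N(\psi_\#)$ are too. Combining this with \eqref{rotinv} and the relations $\Phi_i(x)=\Phi(Q_i^{-1}x)$, $\eta_i(x)=\eta_1(Q_i^{-1}x)$, the $i$-th bracket at $x$ equals the first bracket at $Q_i^{-1}x$, and $x\in\operatorname{supp}\eta_i$ iff $Q_i^{-1}x\in\operatorname{supp}\eta_1$; so it is enough to kill the bracket for $i=1$ on $\operatorname{supp}\eta_1$. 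There we write $x=P_1+\ve\mu My$, i.e. $z=M^{-1}(x-P_1)=\ve\mu y$; on this set $\eta_0\equiv1$, so Proposition \ref{24} together with the very definition of $B_\ve$ gives $L\Phi_1(x)=(\ve\mu)^{-2}\big(\Delta_y\phi+B_\ve[\phi]\big)(y)$, while \eqref{defbe} reads precisely $\ve^2\mu^2 V(x)=e^{\Gamma(y)}+b_\ve(y)$, so $V\Phi_1=(\ve\mu)^{-2}(e^\Gamma+b_\ve)\phi$. Multiplying the bracket by $\ve^2\mu^2$ and recognising $\ve^2\mu^2\big(V\varphi+S_*+\mathcal N(\psi_\#)\big)$ as exactly $H(\phi,\varphi,\alpha)$ of \eqref{in-out-1}, the bracket becomes $(\ve\mu)^{-2}$ times the left-hand side of \eqref{in}, which is $0$ by assumption. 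Therefore $\psi_\#$ solves \eqref{fin}.

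\emph{The point requiring care.} The one genuinely delicate item is domain compatibility: the bracket must vanish throughout $\operatorname{supp}\eta_1$ — which includes the transition annulus where $0<\eta_1<1$ — not merely on the core $\{\eta_1\equiv1\}$. Accordingly \eqref{in} is to be read as holding on a ball slightly larger than $\{\eta_1\equiv1\}$ that still lies inside the set where $\eta_0\equiv1$ and where the expansions of Propositions \ref{ppp}--\ref{prop4} are valid; the standing smallness conditions on $\delta$ and $\delta_1$ (in particular $2\delta_1<\delta^2$) secure this and simultaneously make the supports of the $\eta_i$ mutually disjoint, so that $\sum_i\eta_i\le1$ and only $\eta_i$ is active near $P_i$. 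Beyond this bookkeeping the argument is a direct identity verification with no further input.
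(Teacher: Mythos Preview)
Your proof is correct and follows essentially the same route as the paper's: expand $S(\psi_*+\psi_\#)$, split off the commutator terms $L(\eta_i\Phi_i)-\eta_iL\Phi_i$, use the outer equation to reduce everything to $\sum_i\eta_i[\text{bracket}_i]$, invoke the dihedral symmetry to reduce to $i=1$, and then pass to the $y$-variable to identify the bracket with the inner equation. The paper organises the computation slightly differently (it first writes out the full decomposition of $S(\psi_*+\psi_\#)$ and then reads off the coupled system, rather than substituting \eqref{out} first), but the algebraic content is identical. Your observation about the transition annulus---that as literally stated \eqref{in} is posed on $B_\rho$ while the bracket must vanish on $\operatorname{supp}\eta_1\subset B_{2\rho}$---is accurate and is a minor imprecision present in the paper's own statement as well; it is resolved exactly as you indicate, by reading \eqref{in} on the slightly larger ball (or equivalently by halving the cutoff radius).
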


	\begin{proof}
		We  write $\varphi = \sum_{j=1}^N \eta_j \varphi + (1- \sum_{j=1}^N \eta_j ) \varphi$, where $\eta_j$ are the cut-off functions introduced in \eqref{ansatz}. A straightforward calculation gives
		$$
		\begin{aligned} 
			S( \psi_*+ \psi_\# )&=\
			\sum\limits_{j=1}^N\eta_j\big [  L[\Phi_j]  +  F'( \psi_* - {\alpha \over 2} \, |\log \ve |\, |x|^2) (\Phi_j + \varphi )  + S (\psi_* ) +  {\mathcal N} (\sum\limits_{i=1}^N\eta_i \Phi_i + \varphi ) \big]
			\\
			& +    L[\varphi ]   + \left(1-\sum\limits_{i=1}^N\eta_i \right)\left[   F'( \psi_* - {\alpha \over 2}  \, |\log \ve |\,|x|^2) \varphi  +  S (\psi_* )  +  {\mathcal N} \left(\sum\limits_{i=1}^N\eta_i \Phi_i + \varphi \right) \right]\\
			&+ \sum\limits_{i=1}^N \left( L[\eta_i \Phi_i ] -\eta_i L[\Phi_i]  \right)
		\end{aligned}
		$$
		where ${\mathcal N}$ is defined in \eqref{nonl} and $F$ in \eqref{defF}. Assume  $\Phi_j$, $j=1, \ldots , N$ and $\varphi$ solve the following system of $(N+1)$ equations
		\be \label{ff1}
		\begin{aligned}
			L[\Phi_j] & +  F'( \psi_* - {\alpha \over 2} \, |\log \ve |\, |x|^2 ) (\Phi_j + \varphi )  + S (\psi_* ) +  {\mathcal N} \left(\sum\limits_{i=1}^N\eta_i \Phi_i + \varphi \right)=0 \\
			& {\mbox {for}} \quad |M^{-1} Q_j^{-1}  (x-P_j )| \leq {\delta_1 \over {\sqrt{|\log \ve |} }}, \quad j=1, \ldots , N , \\
			L[\varphi ] &  + \left(1-\sum\limits_{i=1}^N\eta_i \right)\left[   F'( \psi_* - {\alpha \over 2} \, |\log \ve |\, |x|^2) \varphi  +  S (\psi_* )  +  {\mathcal N}\left(\sum\limits_{i=1}^N\eta_i \Phi_i + \varphi \right) \right]\\
			&+ \sum\limits_{i=1}^N \left( L[\eta_i \Phi_i ] -\eta_i L[\Phi_i]  \right)    =0 \quad  {\mbox {for}} \quad x \in \R^2 
		\end{aligned}
		\ee
		Then a direct inspection gives that $\psi_\#$ with the form \eqref{ansatz} is a solution to  \eqref{fin}.
		
		Thanks to the dihedral symmetry of $\psi_*$
		and the form of the functions $\Phi_j $ as in \eqref{ansatz}  given by
		$$
		\Phi_j (x) = \Phi (Q_j^{-1} x ),
		$$
		for a fixed function $\Phi$,
		we solve first the last equation in \eqref{ff1} for functions $\varphi$ satisfying, by hyphotesis \eqref{dihevar}, the dihedral symmetry
		$$
		\varphi (Q_{2\pi \over N} x) = \varphi (x), \quad \forall x \in \RR^2.
		$$
		In fact the first $N$ equations in \eqref{ff1} are reduced to just one equation for $\Phi$.

		Recalling from \eqref{ansatz} that $\Phi (x) = \phi (y)$, where $y= {M^{-1} (x-P_1) \over \ve \mu }$, it is convenient to recast the equation for $\Phi$ in terms of the variable $y$.
		From Proposition \ref{prop1} we have
		$$
		L[\Phi]   =   \frac 1{\ve^2\mu^2} \big[ \,\Delta_y \phi +  B_\ve [\phi]  \,\big ]
		$$
		where $B_\ve[\phi] =\ve^2\mu^2B(\ve\mu y)[\phi]$ and $B$ is the operator given in \eqref{Lz}.
		
		Multiplying  the first equation  in \equ{ff1} by $\ve^2\mu^2$, we get the equation for $\phi$
		$$
		\Delta_y \phi  +   e^{\Gamma} \phi =  -B_\ve[\phi] - b_\ve(y)\phi   -  H(\phi, \vp, \alpha )    \inn B_\rho (0)
		$$
		where $B_\rho (0)$ is the ball centered at $0$ of radius $\rho$, with 
		$\rho= \frac {\delta_1}{\ve\mu\sqrt{|\log\ve|}} $. Here
		$$
		H(\phi, \vp, \alpha)  = \ve^2\mu^2 {\mathcal N}  (\sum\limits_{i=1}^N\eta_i \Phi_i +  \vp)  +\ve^2 \mu^2 \, F'\left(\psi_* - {\alpha \over 2} \, |\log \ve |\, |x|^2 \right) \varphi + \tilde E,
		$$
		the function $b_\ve$ defined in \eqref{defbe} and  $\tilde E= \ve^2 \mu^2 S[\psi_*].$
		This concludes the proof of the proposition.
	\end{proof}

	\medskip
	To solve the coupled system \eqref{in}-\eqref{out}, we proceed as follows: we assume that $\phi$ is given of a special type and solve \eqref{out}. 
	
	The assumptions on $\phi$ are motivated by the following facts on $\tilde E$. 
	From Remark \ref{rmk}, we observe that
	$$
	\tilde E(y) = \ve^2 \mu^2 S[\psi_*] (P_1 + \ve \mu My)
	$$
	satisfies 
	\be \label{rmk1}
	\tilde E(y_1 , y_2)
	= \tilde E(y_1 , -y_2) \quad \forall y\in \RR^2.\ee
	From estimates \eqref{SPsi01a}, 
	we obtain, in the region $|y|<{\delta_1 \over \mu \ve\sqrt{|\log\ve|}}$,
	\begin{equation*}
		\tilde E =
		O\left( \frac{ \ve \mu \sqrt{|\log\ve|}}  {1+|y|^{2+a}}\right) 
	\end{equation*}
	for $a \in (0,1)$ as in Proposition \ref{prop4}.
	With this in mind, we assume
	\[
	\phi(y_1 , y_2) = \phi(y_1, - y_2), \quad \forall y \in \RR^2
	\]
	and that $\phi$ has the size of the error 
	$\tilde E:= \ve^2 \mu^2 S(\psi_*) $
	with two powers less of decay in $y$.
	Thus we solve \eqref{out} assuming that $\phi$ satisfies
	$$
	(1+|y|) |D_y\phi (y)|+  |\phi (y)| \le  \frac {C \ve \mu \sqrt{|\log\ve|} } {1+ |y|^{a}}. 
	$$
	The basic linear operator $\Delta_y \phi + e^{\Gamma} \phi $ in \eqref{in} has  a three-dimensional kernel generated by the bounded functions
	\be \label{defZj}
	Z_i (y) = {\pp \Gamma \over \pp y_i } , \quad i=1,2, \quad Z_0 (y) = 2+ y\cdot \nabla \Gamma (y) ,
	\ee
	see for instance \cite{BaPa}.
	This observation implies that the solvability of \eqref{in} within the anticipated topologies hinges on whether the right-hand side possesses components in the directions spanned by the $Z_i$'s.
	Under the symmetry assumptions \eqref{dihe}, we shall see that no component of the right hand-side in \eqref{in} exists in the direction $Z_2$ (See proof of Proposition \ref{propphi1}).

	\medskip
	Rather than directly addressing \eqref{in}, we will instead tackle the auxiliary projected problem
	\begin{equation}\label{innerp}
		\Delta_y \phi + e^{\Gamma } \phi + B_\ve [\phi ] + b_\ve (y) \phi +H(\phi,\psi,\alpha )=d_1 e^{\Gamma (y)}Z_1\quad \mbox{in}\quad B_\rho,
	\end{equation}
	for some constant $d_1$. We solve \eqref{innerp} coupled with the outer problem \eqref{out}. The constant $d_1$ depends on $\phi$, $\vp$, and $\alpha$.

	To find a solution of \eqref{in}, we will need to show that
	\be \label{eqc1}
	d_1 = d_1 [ \phi , \varphi , \alpha ] =0
	\ee
	We can do this by choosing the value of $\alpha$ in the range of values satisfying \eqref{alpha}. 
	
	\medskip
	The rest of the paper is devoted to find $\vp$, $\phi$, and $\alpha$ that solve the system given by the {\it outer problem} \eqref{out}, the {\it projected inner problem} \eqref{innerp} with \eqref{eqc1}. This is done in \S \ref{sette}. Sections \S \ref{AppeA} and \S \ref{AppeB} contain some technical results.

	\bigskip

	\section{Solving the inner-outer gluing system}\label{sette}
	
	In this section, we solve in $\vp$, $\phi$ and $\alpha$ the system of equations 
	\be\label{inout}
	\begin{aligned}
		\Delta_y \phi &+ e^{\Gamma } \phi + \hat B_\ve [\phi ] + H(\phi,\vp,\alpha)=d_1 e^{\Gamma (y)}Z_1\quad \mbox{in}\quad B_\rho \\
		L[\varphi ] &  + (1-\sum\limits_{i=1}^N\eta_i )\left[   F'( \psi_* - {\alpha \over 2} \, |\log \ve |\, |x|^2 ) \varphi  +  S (\psi_* )  +  {\mathcal N} (\sum\limits_{i=1}^N\eta_i \Phi_i + \varphi ) \right]\\
		&+ \sum\limits_{i=1}^N \left( L[\eta_i \Phi_i ] -\eta_i L[\Phi_i]  \right)    =0 \quad  {\mbox {for}} \quad x \in \R^2 , \\
		d_1 &[\vp , \phi , \alpha ] =0,
	\end{aligned}
	\ee
	where $\hat B_\ve$ is the linear operator
	\be\label{Bhat}
	\hat B_\ve [\phi ]=B_\ve [\phi ] + b_\ve (y)\phi.
	\ee
	We recall that
	$$\rho= \frac {\delta_1}{\ve\mu\sqrt{|\log\ve|}} $$
	where $\delta_1$ is fixed as in \eqref{eta1}. As explained in Section \S \ref{quattro} if $(\vp , \phi , \alpha )$ solves the Problem \eqref{inout}, then the function $\psi_\#$ of the form \eqref{ansatz} solves \eqref{fin}, giving the result in Theorem \ref{teo2}.

	\medskip

	We begin by solving the second equation in \eqref{inout}, assuming that $\alpha $ satisfies the bound in \eqref{defR} and that the following conditions on $\phi$ are met. We assume $\phi$  is given by
	\be \label{phixx}
	\phi = \phi_1 + \phi_2 
	\ee
	where $\phi_1$ solves
	\be\label{eqphi1}
	\Delta_y \phi_1 + e^{\Gamma } \phi_1 + \hat B_\ve [\phi_1 ] + \hat B_\ve[\phi_2] +H(\phi,\vp,r)=d_1 e^{\Gamma }Z_1 + d_0 e^\Gamma Z_0 \quad \mbox{in}\quad B_\rho,
	\ee
	for some constants $d_0$, $d_1$, and
	\be\label{phi2ex}
	\phi_{2}(y) = d_{0}\phi_{2o}(y)\quad \mbox{with}\quad  \phi_{2o}(y):=\left( \frac{4}{3}\frac{|y|^2-1}
	{|y|^2+1}\log(1+|y|^2)-\frac 83\frac{1}
	{|y|^2+1}\right).
	\ee
	A direct computation shows that 
	$$
	\Delta \phi_2 + e^\Gamma \phi_2 + d_0 e^\Gamma Z_0 =0.$$
	Hence $\phi_1 + \phi_2$ solves the first equation in \eqref{inout}.
	For the definition of $Z_i$, we refer to \eqref{defZj}.
	
	\medskip
	Let $\beta \in (0,1)$ and $m >2$. For a function $\phi \in C^{2, \beta} (B_\rho ) $  we define
	\be \label{normin}
	\| \phi \|_* = \| \phi \|_{m-2 , \rho} + \| D_y \phi \|_{m-1 , \rho} + \| D_y^2 \phi \|_{m, \rho, \beta}
	\ee
	where
	\begin{align*}
		\| \phi \|_{m-2, \rho} &= \sup_{y \in B_\rho} |(1+ |y|)^{m-2} \phi (y) |, \quad \\
		\| \phi \|_{m-2, \rho, \beta }
		&= \sup_{y \in B_\rho} |(1+ |y|)^{m-2} \phi (y) | + \sup_{y \in B_\rho} (1+ |y|)^{m-2 + \beta} [\phi]_{B_1(y) \cap B_\rho, \beta}
	\end{align*}
	with
	$$
	[\phi ]_{B_1(y) \cap B_\rho, \beta} 
	= \sup_{y_1 \not= y_2, y_1, y_2 \in B_1(y) \cap B_\rho} {|\phi (y_1) - \phi (y_2) |\over |y_1 - y_2 |^\beta} .$$

	\medskip
	We shall solve the second equation in \eqref{inout} 
	\be \label{nn1} \begin{aligned}
		L[\varphi ] &  + \left(1-\sum\limits_{i=1}^N\eta_i \right)\left[   F'( \psi_* -{\alpha \over 2} |\log \ve| \, |x|^2 ) \varphi  +  S (\psi_* )  +  {\mathcal N} (\sum\limits_{i=1}^N\eta_i \Phi_i + \varphi ) \right]\\
		&+ \sum\limits_{i=1}^N \left( L[\eta_i \Phi_i ] -\eta_i L[\Phi_i]  \right)    =0 \quad  {\mbox {for}} \quad x \in \R^2 
	\end{aligned}
	\ee
	assuming that
	\be \label{ip1}
	\begin{aligned} &\phi= \phi_1 + \phi_2, \quad \phi_1 (y_1 , y_2 ) = \phi_1 (y_1 , - y_2) , \quad
		\\
		& \| \phi_1 \|_*  + |d_1| \leq c \, \ve \mu \sqrt{|\log \ve |},  
		\quad  |d_0| \leq c (\ve \mu \sqrt{|\log \ve |})^{m-1}.
	\end{aligned}
	\ee
	This is the content of the following propostion.
	\medskip
	\begin{prop}\label{outproblem}
		Assume $\phi$ 
		satisfy conditions \eqref{ip1} and $\alpha$ satisfies the bound in \eqref{alpha}.
		Then there  exist constants $\ve_0 >0$, $C>0$, $\sigma \in (0,1)$, $\sigma <m-2$ with $m$ as in the definition of \eqref{normin},   such that for all $\ve \in (0, \ve_0)$ the following holds: There exists a unique solution $\vp = \vp (\phi , \alpha )$, $\varphi \in C^{1,\beta } (\R^2)$ to \eqref{nn1} satisfying the dihedral symmetry \eqref{dihevar} and  such that 
		\be \label{estout1}
		\| (1+ |x|^2)^{-1} \vp \|_\infty \leq C \, \ve^{1+ \sigma}
		\ee
		where $\sigma \in (0,1) $ as in Proposition \ref{prop4}. Moreover, there exists $\bar \ell \in (0,1)$ such that
		\be \label{estout2}
		\| (1+ |x|^2)^{-1} \, [ \vp (\phi^1 , \alpha ) - \vp (\phi^2 , \alpha) ]\|_\infty \leq  \bar \ell \, \, (\| \phi_1^1 - \phi_1^2 \|_* +|d_0^1-d_0^2|)
		\ee
		\be \label{estout3}
		\| (1+ |x|^2)^{-1} \, [ \vp (\phi , \alpha_1 ) - \vp (\phi , \alpha_2 ) ]\|_\infty \leq  \bar \ell \, \, |\alpha_1 - \alpha_2|.
		\ee
		where $\phi^\ell=\phi_1^\ell+d_0^\ell\phi_{2o}$ for $\ell=1,2$.
	\end{prop}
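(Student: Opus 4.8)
The plan is to solve \eqref{nn1} by a contraction mapping argument based on the linear theory for $L$ recorded in Lemma \ref{prop2}. That lemma provides a linear solution operator $\mathcal T^o$ such that, whenever $\|h\|_{\bar\nu}<\infty$ with $\bar\nu>2$, the function $\psi=\mathcal T^o(h)$ solves $L\psi+h=0$ in $\R^2$, belongs to $C^{1,\beta}(\R^2)$, and satisfies $|\psi(x)|\le C\|h\|_{\bar\nu}(1+|x|^2)$. Because $L$ is rotationally invariant, see \eqref{rotinv}, and this solution is the unique one in the class with finite norm $\|(1+|x|^2)^{-1}\,\cdot\,\|_\infty$, the operator $\mathcal T^o$ sends $Q_{2\pi/N}$–invariant data to $Q_{2\pi/N}$–invariant solutions. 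Setting
$$
\mathcal A(\vp)=\Big(1-\sum_{i=1}^N\eta_i\Big)\Big[F'\big(\psi_*-\tfrac{\alpha}{2}|\log\ve|\,|x|^2\big)\vp+S(\psi_*)+\mathcal N\big(\textstyle\sum_i\eta_i\Phi_i+\vp\big)\Big]+\sum_{i=1}^N\big(L[\eta_i\Phi_i]-\eta_i L[\Phi_i]\big),
$$
equation \eqref{nn1} is equivalent to $\vp=\mathcal T^o(\mathcal A(\vp))$, and I would solve it in the ball $\mathcal B$ of $Q_{2\pi/N}$–invariant functions $\vp\in C^{1,\beta}(\R^2)$ with $\|(1+|x|^2)^{-1}\vp\|_\infty\le C\ve^{1+\sigma}$; note that $S(\psi_*)$, $\sum_i\eta_i\Phi_i$, and the commutator sum are all $Q_{2\pi/N}$–invariant, so $\mathcal A$ preserves the dihedral symmetry.

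The core of the argument is the estimate of $\|\mathcal A(\vp)\|_{\bar\nu}$ for $\vp\in\mathcal B$. By \eqref{SPsi02}, on $\{1-\sum_i\eta_i\neq0\}$ the term $S(\psi_*)$ is bounded by $C\ve^{1+\sigma}(1+|x|)^{-\bar\nu}$, which fixes the size of the ball. The commutator terms $L[\eta_i\Phi_i]-\eta_i L[\Phi_i]$ are supported on the annular set $\sqrt{|\log\ve|}\,|M^{-1}(Q_i^{-1}x-P_1)|\in[\delta_1,2\delta_1]$, a fixed compact neighbourhood of the $P_i$'s where $(1+|x|)^{\bar\nu}\sim1$ and $|\nabla\eta_i|\lesssim\sqrt{|\log\ve|}$, $|D^2\eta_i|\lesssim|\log\ve|$; there the bounds \eqref{ip1}, together with $\phi=\phi_1+d_0\phi_{2o}$ and the at-most-logarithmic growth of $\phi_{2o}$, give $|\Phi_i|+\ve\mu|\nabla_x\Phi_i|\lesssim(\ve\mu\sqrt{|\log\ve|})^{m-1}\log\tfrac1{\ve\mu}$, so the commutator contributes a source of size $\ve^{m-1}$ times a power of $|\log\ve|$, which is $o(\ve^{1+\sigma})$ precisely because $\sigma<m-2$. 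Finally, the linear potential term and $\mathcal N$ are nonzero only where $1-\eta_i$ and $F'$ overlap, i.e. on $2\delta_1<\sqrt{|\log\ve|}\,|M_i^{-1}(x-P_i)|<\delta$, and there $|F'(\psi_*-\tfrac{\alpha}{2}|\log\ve|\,|x|^2)|\lesssim\ve^2\mu^2|\log\ve|^2=o(1)$ as in the proof of Proposition \ref{prop4}, so both pieces are lower order. Hence, for $C$ large and $\ve$ small, $\vp\mapsto\mathcal T^o(\mathcal A(\vp))$ maps $\mathcal B$ into itself; the same estimates applied to differences — the potential term yielding $\|\mathcal A(\vp^1)-\mathcal A(\vp^2)\|_{\bar\nu}\lesssim\ve^2\mu^2|\log\ve|^2\|(1+|x|^2)^{-1}(\vp^1-\vp^2)\|_\infty$ and $\mathcal N$ contributing a comparable $o(1)$ factor — show that it is a contraction, and the Banach fixed point theorem yields the unique $\vp=\vp(\phi,\alpha)\in\mathcal B$, with the regularity \eqref{estout1} and the dihedral symmetry.

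The Lipschitz estimates \eqref{estout2}–\eqref{estout3} follow by differencing the fixed point identity. The $\phi$–dependence enters $\mathcal A$ only linearly through $\Phi_i$ in the commutator terms and through $\mathcal N(\sum_i\eta_i\Phi_i+\vp)$; the former is estimated as above with a Lipschitz constant $o(\ve^{1+\sigma})$ relative to $\|\phi_1\|_*+|d_0|$ and the latter has an $o(1)$ Lipschitz constant, so together with the contraction in $\vp$ one obtains \eqref{estout2} with some $\bar\ell\in(0,1)$. The $\alpha$–dependence enters only through the argument $\psi_*-\tfrac\alpha2|\log\ve|\,|x|^2$ of $F'$ and $\mathcal N$ and through $\mu=\mu(\alpha)$ inside $\psi_*$ and $S(\psi_*)$; since $2\partial_\alpha\mu/\mu=\partial_\alpha\overline m(\alpha)$ is uniformly bounded by the expansion following \eqref{defmu} and all these terms already carry the small factors identified above, the $\alpha$–derivative of $\mathcal A$ has $o(1)$ operator norm, which gives \eqref{estout3}.

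I expect the main difficulty to be the two "overlap" estimates: \emph{(i)} showing that the commutator terms $L[\eta_i\Phi_i]-\eta_i L[\Phi_i]$ are genuinely of order lower than $\ve^{1+\sigma}$ in the $\|\cdot\|_{\bar\nu}$ norm — this is exactly what dictates the constraint $\sigma<m-2$ and what forces one to keep track separately of the decaying part $\phi_1$ and of the slowly growing part $d_0\phi_{2o}$, exploiting the smallness of $d_0$ in \eqref{ip1}; and \emph{(ii)} controlling the potential term $(1-\eta_i)F'(\cdot)\vp$ on the thin region where $1-\eta_i$ and $F'$ are simultaneously nonzero, which is precisely the ingredient that upgrades $\mathcal A$ from a bounded map to a contraction. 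The remaining ingredients — invertibility of $L$ in weighted spaces, preservation of the dihedral symmetry, and $C^{1,\beta}$ regularity — are all supplied by the results already quoted above.
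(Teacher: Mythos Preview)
Your proposal is correct and follows the same route as the paper: recast \eqref{nn1} as the fixed-point equation $\vp=\mathcal T^o(G(\vp,\phi,\alpha))$ in a ball of $Q_{2\pi/N}$-invariant functions with $\|(1+|x|^2)^{-1}\vp\|_\infty\le C\ve^{1+\sigma}$, and estimate the four pieces of $G$ exactly as the paper does --- $S(\psi_*)$ via \eqref{SPsi02} (the paper also invokes \eqref{ee2n} for the transitional annulus $\delta_1<\sqrt{|\log\ve|}\,|M_i^{-1}(x-P_i)|<\delta$, which you implicitly absorb), the potential and $\mathcal N$ terms via the smallness of $F'$ on the overlap region, and the commutator $L[\eta_i\Phi_i]-\eta_i L[\Phi_i]$ via the decay of $\phi$, which is precisely what forces $\sigma<m-2$. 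One minor overstatement: the commutator's Lipschitz constant in $\|\phi_1^1-\phi_1^2\|_*$ is only $O((\ve\mu)^{m-2}|\log\ve|^{m/2})=o(1)$, not $o(\ve^{1+\sigma})$ as you write, but $o(1)$ is what is needed for $\bar\ell<1$, and the paper's own derivation of \eqref{estout2}--\eqref{estout3} is at the same level of detail.
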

	
	\medskip
	The basic theory to prove this result relies on the solvability and a-priori bounds for the 
	Poisson equation for the operator $L$
	\be
	\label{louter}
	L[\vp]  +  g(x) = 0  \inn \R^2 ,
	\ee
	for a bounded function $g$. Here $L$ is the differential operator in divergence form defined in \eqref{defL}.

	We take functions  $g(x)$ that satisfy the decay condition
	\be \label{normout}
	\| g\|_{\bar \nu}\, :=\, \sup_{x\in \R^2} (1+|x|)^{\bar \nu}|g(x)|\, <\,+\infty \, , \quad {\bar \nu} >2.
	\ee
	\begin{lemma}\label{prop2} (Proposition 6.1 in \cite{guerra-musso}) Assume $g$ satisfies the dihedral symmetry \eqref{dihege}, and $\| g \|_{\bar \nu }<\infty$.
		There exist $C>0$ and a solution $\vp (x)$ to problem $\equ{louter}$, which is of class $C^{1,\beta}(\R^2)$ for any $0<\beta<1$,  that defines a linear operator $\vp = {\mathcal T}^o (g) $ of $g$
		and satisfies the bound
		\be\label{estimate}
		|\vp (x)| \,\le \,     C{ \| g\|_{\bar \nu} }(1+ |x|^2).
		\ee
		Besides, $\varphi$ satisfies \eqref{dihe}. We refer to \eqref{normout} for the definition of the norm $\| \cdot \|_{\bar \nu}$.
	\end{lemma}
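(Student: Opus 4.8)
The plan is to realize $\varphi=\mathcal T^o(g)$ as a limit of solutions on large balls, after first peeling off the quadratically growing ``monopole'' part intrinsic to the degenerate operator $L$; the whole difficulty will sit in one $R$‑uniform weighted estimate.

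\textbf{Step 1: reduction to zero total mass.} Since $\bar\nu>2$, the number $\mathcal M:=\int_{\R^2}g$ is finite with $|\mathcal M|\le C\|g\|_{\bar\nu}$. A direct computation with $K$ in \eqref{defK-0} gives $Kx=\tfrac{h^2}{h^2+|x|^2}x$, hence $L(1+|x|^2)=-\tfrac{4h^4}{(h^2+|x|^2)^2}$ and $\int_{\R^2}\tfrac{4h^4}{(h^2+|x|^2)^2}=4\pi h^2$. Setting $A:=\tfrac{\mathcal M}{4\pi h^2}$ and writing $\varphi=A(1+|x|^2)+v$, the equation becomes $Lv=\tilde g$ with $\tilde g:=-g+A\,\tfrac{4h^4}{(h^2+|x|^2)^2}$; this $\tilde g$ still satisfies the dihedral symmetry \eqref{dihege} (the added term is radial), decays like $(1+|x|)^{-\bar\nu'}$ with $\bar\nu':=\min(\bar\nu,4)>2$ and $\|\tilde g\|_{\bar\nu'}\le C\|g\|_{\bar\nu}$, and — crucially — has zero total mass $\int_{\R^2}\tilde g=0$; moreover $Q_{2\pi/N}$‑invariance forces the first moment to vanish as well, $\int_{\R^2}y\,\tilde g\,dy=0$. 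Since $A(1+|x|^2)$ already obeys the bound of the lemma, it suffices to produce $v\in C^{1,\beta}(\R^2)$ with $|v(x)|\le C\|g\|_{\bar\nu}(1+|x|^2)$.

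\textbf{Step 2: exhaustion and the uniform bound.} For large $R$ let $v_R\in H^1_0(B_R)$ be the weak solution of $Lv_R=\tilde g$ in $B_R$, $v_R|_{\partial B_R}=0$; this is well posed by Lax--Milgram because $K$ is smooth, symmetric and positive definite with eigenvalues $1$ and $\tfrac{h^2}{h^2+|x|^2}$, hence uniformly elliptic on $\overline{B_R}$, and interior Schauder estimates give $v_R\in C^{2,\beta}_{\rm loc}$. Since $L$ commutes with the rotations $Q_\theta$ (see \eqref{rotinv}) and $\tilde g$ is $Q_{2\pi/N}$‑invariant, uniqueness forces $v_R(Q_{2\pi/N}x)=v_R(x)$. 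The heart of the matter is the $R$‑independent estimate $\sup_{B_R}\dfrac{|v_R(x)|}{1+|x|^2}\le C\|g\|_{\bar\nu}$, which I would obtain by combining: weighted energy estimates exploiting $\int_{B_R}\tilde g\to0$ and $\int_{\R^2}y\,\tilde g=0$ together with a Hardy‑type inequality adapted to the radial degeneration of $K$; a mode analysis near infinity (the radial mode of $v_R$ is governed by the one‑dimensional ODE $\big(r\tfrac{h^2}{h^2+r^2}v_r\big)'=-\int_0^r s\,\tilde g_{\rm rad}$, whose solution grows at most like $|x|^{4-\bar\nu'}$ thanks to $\int\tilde g=0$, and the remaining angular modes decay); and De Giorgi--Nash--Moser iteration on the dyadic annuli $\{2^k\le|x|\le2^{k+1}\}$, on each of which the rescaling $x=2^k\xi$ renders $L$ uniformly elliptic with constants independent of $k$. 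Passing $R\to\infty$ along a subsequence, using the local Schauder bounds, produces $v$; then $\varphi:=A(1+|x|^2)+v$ solves $L\varphi+g=0$, inherits the dihedral symmetry, and satisfies $|\varphi(x)|\le C\|g\|_{\bar\nu}(1+|x|^2)$. Every step — the constant $A$, each $v_R$ and the limit — is linear in $g$, so $g\mapsto\varphi$ defines the linear operator $\mathcal T^o$.

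\textbf{Main obstacle.} All the real work is in the $R$‑uniform weighted estimate: $L$ is \emph{not} uniformly elliptic on $\R^2$, degenerating in the radial direction at infinity ($Kx=\tfrac{h^2}{h^2+|x|^2}x\to0$), so there is no nonnegative nonconstant supersolution of $L$ on all of $\R^2$ and a naive comparison barrier is unavailable — which is exactly why the quadratic monopole must be extracted first and the zero‑mass, zero‑first‑moment remainder handled by the degeneration‑adapted energy/mode analysis above. An equivalent and more transparent route, handy for pinning down constants, is to build directly a fundamental solution (potential kernel) $G_L(x,y)$ of $L$ on $\R^2$ — defined up to an $L$‑harmonic ambiguity, with a logarithmic singularity on the diagonal as for $\Delta$ but growing like $\tfrac{|x|^2}{4\pi h^2}$ as $|x|\to\infty$ — and then set $\varphi(x)=\int_{\R^2}G_L(x,y)g(y)\,dy$, reading off the bound from the $y$‑expansion of $G_L$ and $\int g=0$ after the monopole has been removed. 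Finally, the statement — like the cited Proposition~6.1 of \cite{guerra-musso} — asks only for \emph{a} linear right inverse of $L$ with quadratic growth control, not a unique one, so the normalization implicit in the exhaustion (homogeneous Dirichlet data on $\partial B_R$) is harmless.
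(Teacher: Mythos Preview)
The paper does not prove this lemma in the body; it simply cites Proposition~6.1 of \cite{guerra-musso}. The argument there (a copy of which appears in the leftover source after the paper's \texttt{\textbackslash end\{document\}}) is a direct Fourier--mode construction: because $L$ is rotation invariant, writing $g=\sum_k g_k(r)e^{ik\theta}$ and $\varphi=\sum_k\psi_k(r)e^{ik\theta}$ decouples the problem into the ODEs
\[
\frac{h^2}{r}\Big(\frac{r}{r^2+h^2}\psi_k'\Big)'-\frac{k^2}{h^2+r^2}\Big(\frac{h^2}{r^2}+1\Big)\psi_k+g_k=0 .
\]
For $k=0$ an explicit double integral gives $|\psi_0(r)|\le C\|g\|_{\bar\nu}(1+r^2)$ (this is where the quadratic growth enters), while for $|k|\ge1$ one uses the positive homogeneous solution $z_k\sim r^k$ near $0$, $z_k\sim r^{1/2}e^{kr/h}$ near $\infty$, as a barrier in a variation--of--parameters formula, yielding $|\psi_k|\le Ck^{-2}\|g\|_{\bar\nu}(1+r)^{2-\bar\nu}$. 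Summing gives the result. No monopole extraction and no exhaustion are needed.

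Your route is genuinely different and your Step~1 is correct (the identity $Kx=\tfrac{h^2}{h^2+|x|^2}x$ and $L(1+|x|^2)=-\tfrac{4h^4}{(h^2+|x|^2)^2}$ are right). The radial--mode ODE you write is exactly the paper's $k=0$ equation, and your zero--mass observation does sharpen its growth.

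However, there is a concrete error in your Step~2. You claim that on the dyadic annulus $\{2^k\le|x|\le2^{k+1}\}$ the isotropic rescaling $x=2^k\xi$ ``renders $L$ uniformly elliptic with constants independent of $k$''. It does not: the eigenvalues of $K(x)$ are $1$ (tangential) and $h^2/(h^2+|x|^2)$ (radial), so the ellipticity ratio of $K(2^k\xi)$ for $|\xi|\in[1,2]$ is $(h^2+4^k|\xi|^2)/h^2\sim 4^k$, which blows up with $k$. De~Giorgi--Nash--Moser on these annuli therefore gives constants that deteriorate geometrically, and cannot by itself deliver the $R$--uniform weighted bound you need. This is precisely the radial degeneracy you flag in your ``Main obstacle'' paragraph, and an isotropic rescaling cannot cure it.

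The fix is the very mode analysis you mention only in passing: do it for \emph{all} $k$, not just $k=0$. Once the problem is reduced to ODEs in $r$, the degeneracy is harmless---it becomes an explicit coefficient in a one--dimensional equation with computable homogeneous solutions and barriers. That is exactly the cited proof, and it is considerably shorter than the exhaustion/energy/DGNM package you propose.
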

	
	\begin{proof}
		The proof is given in \cite{guerra-musso} (Proposition 6.1), an the dihedral symmetry for $\vp$ follows from the fact that the operator $L$ is invariant under rotation.
	\end{proof}
	
	\medskip
	
	We now turn to the proof of Proposition \ref{outproblem}.
	
	\medskip
	\begin{proof}[Proof of Proposition \ref{outproblem}]
		We let 
		$X^o$ be the Banach space of all functions $\vp \in C^{1,\beta}(\R^2)$  satisfying the dihedral symmetry \eqref{dihevar} such that
		\[
		\| (1+ |x|^2)^{-1} \, \vp \|_\infty < +\infty,
		\]
		and let $G$ be
		\be \label{defG}
		\begin{aligned}
			G(\psi, \phi, \alpha)&= \left(1-\sum\limits_{i=1}^N\eta_i \right)\left[   F'( \psi_* -{\alpha \over 2} |\log \ve| \, |x|^2 ) \varphi  +  S (\psi_* )  +  {\mathcal N}  \left(\sum\limits_{i=1}^N\eta_i \Phi_i + \varphi \right) \right]\\
			&+ \sum\limits_{i=1}^N \left( L[\eta_i \Phi_i ] -\eta_i L[\Phi_i]  \right) .
		\end{aligned}
		\ee
		Since $\phi$ and $\alpha $ satisfy \eqref{ip1} and \eqref{alpha} respectively, by construction we have that 
		$  G(\psi, \phi , \alpha)$ satisfies \eqref{dihege} for functions $\vp \in X^o$.
		We formulate the outer equation \equ{nn1} 
		as the fixed point problem in $X^o$
		\begin{align*}
			\vp  =  \TT^o [   G(\vp, \phi , \alpha)   ], \quad \vp \in X^o
		\end{align*}
		where ${\mathcal T}^o$ is defined in Lemma \ref{prop2} and $G$ is given in \eqref{defG}.
		
		The strategy is to use the Contraction Mapping Theorem in the set $\hat{\mathcal B} = \{ \vp \in X^o \, : \, \| (1+ |x|^2)^{-1} \, \vp \|_\infty \, \leq C \ve^{1+\sigma} \}$. We claim that there exist $\sigma >0$,  $C>0$ independent of $\ve $ such that $ \TT^o [   G(\vp, \phi)   ] \in \hat{\mathcal B}$ for any $\phi$ satisfying \eqref{ip1}, provided  $\vp \in \hat{\mathcal B}.$ Let $\vp \in \hat{\mathcal B}.$ 
		Since
		$$
		(1-\sum\limits_{i=1}^N\eta_i ) (x) \not= 0 \quad \Leftrightarrow \quad |M_j^{-1} (x-P_j ) | >{2 \delta_1 \over \sqrt{|\log \ve |}} \quad \forall j,
		$$
		a combination of \eqref{SPsi02} and \eqref{ee2n} gives the existence of $\sigma \in (0,1)$, which can be taken $\sigma <a$, such that
		$$
		\left|(1-\sum\limits_{i=1}^N\eta_i )  S(\psi_*) \right| \leq C_1 {\ve^{1+\sigma} \over 1+ |x|^{\bar \nu}},
		$$
		for some $C_1 >0$. 
		Besides, since 
		$$
		F' (\psi_*-{\alpha \over 2} |\log \ve| \, |x|^2) = \ve^2 \left( \eta' (\psi_* -{\alpha \over 2} |\log \ve| \, |x|^2 ) + \eta (\psi_* -{\alpha \over 2} |\log \ve| \, |x|^2 )\right) \, e^{\psi_* -{\alpha \over 2} |\log \ve| \, |x|^2}
		$$
		the support of $F' (\psi_* -{\alpha \over 2} |\log \ve| \, |x|^2 ) $ is contained in $B(0,1)$ thanks to \eqref{cutout}.
		Arguing as in \eqref{rhsf} we get
		\begin{align*}
			\left| (1-\sum\limits_{i=1}^N\eta_i )  F'( \psi_* -{\alpha \over 2} |\log \ve| \, |x|^2 ) \varphi \right| & \leq \ve^2 \mu^2  {  \, \| (1+ |x|^2)^{-1} \, \vp \|_\infty  \over 1+|x|^{\bar \nu}}  .
		\end{align*}
		On the other hand, from \eqref{nonl} we get that
		\begin{equation}\label{expN}
			|{\mathcal N} (g)| \leq C |F(\psi_* -{\alpha \over 2} |\log \ve| \, |x|^2) | |g|^2; 
		\end{equation}
		hence
		\begin{align*}
			\left| \left(1-\sum\limits_{i=1}^N\eta_i \right)  {\mathcal N}  \left(\sum\limits_{i=1}^N\eta_i \Phi_i + \varphi \right)  \right|& \leq C  \left| \left(1-\sum\limits_{i=1}^N\eta_i \right)  F(\psi_* -{\alpha \over 2} |\log \ve| \, |x|^2)\right| \left( \sum_{i=1}^N | \eta_i \Phi_i |^2 + |\vp |^2 \right) .
		\end{align*}
		Since $\Phi_i (x) = \Phi (Q_i^{-1} x)$, $\Phi (x) = \phi (y)$, with $y= {M^{-1} (x-P_1) \over \ve \mu}$ and $\vp$ satisfying \eqref{ip1}, we get
		$$
		\left| \left(1-\sum\limits_{i=1}^N\eta_i \right)  {\mathcal N}  \left(\sum\limits_{i=1}^N\eta_i \Phi_i + \varphi \right)  \right| \leq
		o(1) {\ve^{1+\sigma} \over 1+ |x|^{\bar \nu}},
		$$
		with $o(1) \to 0$ as $\ve \to 0$. Finally we consider the term $\sum\limits_{i=1}^N \left( L[\eta_i \Phi_i ] -\eta_i L[\Phi_i]  \right) $. Since the operator $L$ is invariant under rotation, it is enough to treat the term $ L[\eta_1 \Phi_1 ] -\eta_1 L[\Phi_1]$. 
		Let
		$$
		\tilde \eta_1 (z) = \eta_1 (Mz + P_1), \quad \tilde \Phi_1 (z) = \Phi_1 (Mz+P_1).$$
		From Proposition \ref{prop01} we obtain
		\begin{align*}
			L[\eta_1 \Phi_1 ] &-\eta_1 L[\Phi_1] = 2 \nabla_z \tilde \eta_1 \nabla_z \tilde \Phi_1 + \tilde \Phi_1 \Delta_z \tilde \eta_1 + B [\tilde \eta_1 \tilde \Phi_1 ] - \tilde \eta_1 B[\tilde \Phi_1]   .
		\end{align*}
		Using the bound in \eqref{normin} for the function $\phi$ and the fact that the above function has compact support, we obtain
		\begin{align*}
			\left|\sum\limits_{i=1}^N \left( L[\eta_i \Phi_i ] -\eta_i L[\Phi_i]  \right) \right| \leq o(1)\,  {\ve^{1+ \sigma} \over 1+ |x|^{\bar \nu}}
		\end{align*}
		for $o(1) \to 0$ as $\ve \to 0$. We check this estimate with the first term 
		$2 \nabla_z \tilde \eta_1 \nabla \tilde \Phi_1 $. Using the bounds \eqref{ip1}, We have
		\begin{align*}
			|&2 \nabla_z \tilde \eta_1 \nabla_z \tilde \Phi_1 | \leq C \sqrt{|\log \ve |} \,\left |\eta' \left({\sqrt{|\log \ve |} \, M^{-1} (x-P_1) \over \delta_1 }  \right)\right| \, |\nabla_z \tilde \Phi_1|\\
			&\leq  C {\sqrt{|\log \ve |} \over \ve \mu} \, \left|\eta' \left({\sqrt{|\log \ve |} \, M^{-1} (x-P_1) \over \delta_1 }  \right)\right|\left( { \| \phi_1 \|_* \over (1+ |{z \over \ve \mu \sqrt{|\log \ve|}}|)^{m-1} }+{c_0 \over (1+ |{z \over \ve \mu \sqrt{|\log \ve|]}}|)}\right) \\
			&\leq C { ( \ve \mu)^{m-1} |\log \ve |^{1 +{m-1 \over 2}} \over 1+ |x|^{\bar \nu} }  .
		\end{align*}
		Since we took $\sigma <m-2$, we obtain the desired estimate. Similar arguments work for the remaining terms.
		
		\medskip
		A combination of these estimates and the a-priori bound \eqref{estimate} in Lemma \ref{prop2} gives that
		$$
		|\TT^o [   G(\vp, \phi , \alpha)   ] (x)| \leq C {\ve^{1+ \sigma} \over 1+ |x|^2}
		$$
		for some $C>0$. Hence $\TT^o [   G(\vp, \phi , \alpha)  ]\in \hat{\mathcal B}$.
		
		Next we prove that $\vp \to \TT^o [   G(\vp, \phi , \alpha)  ]$ is a contraction mapping in $\hat{\mathcal B}$. Take $\vp_1$, $\vp_2 \in \hat{\mathcal B}.$ Then
		\begin{align*}
			| \TT^o [   G(\vp_1 , \phi , \alpha)  ] - \TT^o [   G(\vp_1 , \phi , \alpha )  ] |& \leq C
			\left|\left(1-\sum\limits_{i=1}^N\eta_i \right)   F'( \psi_*  -{\alpha \over 2} |\log \ve| \, |x|^2) \right| \, |\varphi_1 - \vp_2 | \\
			+ C \left|\left(1-\sum\limits_{i=1}^N\eta_i \right)\right| & \, \left| {\mathcal N}  \left(\sum\limits_{i=1}^N\eta_i \Phi_i + \varphi_1  \right) - {\mathcal N}  \left(\sum\limits_{i=1}^N\eta_i \Phi_i + \varphi_2  \right) \right|  
		\end{align*}
		We use \eqref{cutout} and the definition of the cut-off functions $\eta_i$ (see \eqref{eta1}) to get
		\[
		\left|\left(1-\sum\limits_{i=1}^N\eta_i \right)   F'( \psi_* -{\alpha \over 2} |\log \ve| \, |x|^2) \right| \, |\varphi_1 - \vp_2 |
		\leq {o(1) \over 1+ |x|^\nu} \, \| (1+ |x|^2)^{-1} (\vp_1 - \vp_2) \|_\infty, 
		\]
		for $o(1) \to 0 $ as $\ve \to 0$. 
		Moreover from \eqref{nonl}, we write
		\begin{align*}
			{\mathcal N} &\left(\sum\limits_{i=1}^N\eta_i \Phi_i + \varphi_1 \right) - {\mathcal N} \left(\sum\limits_{i=1}^N\eta_i \Phi_i + \varphi_2\right) \\
			&= F \left(\psi_* -{\alpha \over 2} |\log \ve| \, |x|^2+ \sum\limits_{i=1}^N\eta_i \Phi_i + \varphi_1 \right) -  F \left(\psi_* -{\alpha \over 2} |\log \ve| \, |x|^2+ \sum\limits_{i=1}^N\eta_i \Phi_i + \varphi_2 \right) \\ 
			&- F' (\psi_* -{\alpha \over 2} |\log \ve| \, |x|^2) [ \varphi_1 -\vp_2 ] .
		\end{align*}
		Hence we bound
		\begin{align*}
			\left|\left(1-\sum\limits_{i=1}^N\eta_i \right)\right| & \, \left| {\mathcal N}  \left(\sum\limits_{i=1}^N\eta_i \Phi_i + \varphi_1  \right) - {\mathcal N}  \left(\sum\limits_{i=1}^N\eta_i \Phi_i + \varphi_2  \right) \right| \\
			&\leq  \left|\left(1-\sum\limits_{i=1}^N\eta_i \right)\right| \, \left| F' \left(\psi_* -{\alpha \over 2} |\log \ve| \, |x|^2 + \sum\limits_{i=1}^N\eta_i \Phi_i \right)\right| \, |\vp_1 - \vp_2 | \\
			& + \left|\left(1-\sum\limits_{i=1}^N\eta_i \right)\right| \, | F' (\psi_* -{\alpha \over 2} |\log \ve| \, |x|^2)| \, |\vp_1 - \vp_2 |.
		\end{align*}
		Under assumptions \eqref{ip1} on $\phi$ we have that
		$\psi_* + \sum\limits_{i=1}^N\eta_i \Phi_i = \psi_* (1+ o(1))$ uniformly on $\R^2$, as $\ve \to 0$. This fact and an argument similar to the previous one give
		\begin{align*}
			\left|\left(1-\sum\limits_{i=1}^N\eta_i \right)\right| & \, \left| {\mathcal N}  \left(\sum\limits_{i=1}^N\eta_i \Phi_i + \varphi_1  \right) - {\mathcal N}  \left(\sum\limits_{i=1}^N\eta_i \Phi_i + \varphi_2  \right) \right| \\
			&\leq {o(1) \over 1+ |x|^{\bar \nu}} \, \| (1+ |x|^2)^{-1} (\vp_1 - \vp_2) \|_\infty.
		\end{align*}
		A combination of these estimates and the a-priori bound \eqref{estimate} in Lemma \ref{prop2} gives that
		$$
		|\TT^o [   G(\vp_1, \phi, \alpha)   ] - \TT^o [   G(\vp_2, \phi, \alpha )   ] | \leq o(1) { \| (1+ |x|^2)^{-1} (\vp_1 - \vp_2) \|_\infty \over 1+ |x|^2}
		$$
		with $o(1) \to 0$ as $\ve \to 0$. Hence $\vp \to \TT^o [   G(\vp, \phi, \alpha)  ]$ is a contraction mapping in $ \hat{\mathcal B}$.

		\medskip
		Applying the contraction mapping Theorem we obtain that there exists a unique
		$\vp \in \hat{\mathcal B}$ solution to \eqref{nn1}.
		
		\medskip
		To prove \eqref{estout2}, let $\vp^i = \vp (\phi_i , r)$, $i=1,2$, for some fixed $r$ satisfying \eqref{defR}. The $\bar \vp = \vp^1 - \vp^2$ satisfies
		\begin{align*}
			L[\bar \varphi ] &  + \left(1-\sum\limits_{i=1}^N\eta_i \right)\left[   F'( \psi_* -{\alpha \over 2} |\log \ve| \, |x|^2) \bar \varphi   +  {\mathcal N} \left(\sum\limits_{i=1}^N\eta_i \Phi_i^1 + \varphi^1 \right) - {\mathcal N} \left(\sum\limits_{i=1}^N\eta_i \Phi_i^2 + \varphi^2 \right) \right]\\
			&+ \sum\limits_{i=1}^N \left( L[\eta_i (\Phi_i^1 - \Phi_i^2) ] -\eta_i L[(\Phi_i^1 - \Phi_i^2)]  \right)    =0 \quad  {\mbox {for}} \quad x \in \R^2 
		\end{align*}
		where $\Phi_i^\ell (x) = \Phi^\ell (Q_i^{-1} x)$, $\Phi^\ell (x) = \phi^\ell (y) $ for $y= {M^{-1} (x-P_1) \over \ve \mu }$.
		This problem can be solved using a fixed point argument, as before, which gives the desired estimate \eqref{estout2}. Similarly, we get \eqref{estout3}.
		This concludes the proof.

	\end{proof}

	\medskip

	To solve equation \eqref{eqphi1} in $\phi_1$, we consider the problem 
	\be
	\Delta \phi  +  e^{\Gamma} \phi  + \hat B_\ve [\phi]   + h(y)  = d_1 e^{\Gamma} Z_1 + d_0 e^{\Gamma} Z_0 \inn B_\rho (0)
	\label{eee} \ee
	where $\rho = {\delta_1 \over \ve \mu \sqrt{|\log \ve |}}$.

	For a function $h$ defined in $B_\rho (0)$, numbers $m>2$ and $\beta \in (0,1)$ we denote by $\|h\|_{m,\beta,\rho}$ 
	$$
	\|h \|_{m,\beta, \rho} =  \sup_{y\in B_\rho (0) } (1+|y|)^{m+\beta}[ h]_{B(y,1)\cap B_\rho (0)}   +     \|h \|_{m, \rho}
	$$
	where
	\begin{align*}
		\|h \|_{m, \rho} &=  \sup_{y\in  B_\rho (0) } | (1+|y|)^{m} h(y)|] .\\    
	\end{align*}
	In Appendix \ref{AppeB} we prove:
	\begin{lemma}\label{prop1}  Assume $h$ satisfies 
		\be \label{even}
		h(y_1 , y_2) = h(y_1 , -y_2), \quad \forall y \in \RR^2
		\ee
		and $\| h \|_{m,\beta, \rho} <\infty$, for some $m>2$,  $\beta \in (0,1)$. There is $C>0$ such that for all $\ve >0$ small
		Problem $\equ{eee}$ has a solution $\phi = {T}[h]$ for certain scalars $d_{i}= d_{i}[h]$, $i=0,1$, that defines a linear operator of $h$ and satisfies
		\begin{align*} 
			\| \phi\|_{*} \ \le\ C \|h \|_{m,\beta, \rho} ,
		\end{align*}
		where $\| \cdot \|_*$ is defined in \eqref{normin}. 
		In addition, the $c_i$'s can be estimated as
		\begin{align*} 
			d_{0}[h]\, = & \, \gamma_0\int_{B_\rho }  h Z_0  + O( (\ve \mu \sqrt{|\log \ve |})^{m-2})  \|h \|_{m,\beta, \rho} , \\    d_{1}[h]\, = &\, \gamma_1\int_{B_\rho}  h Z_1  + O( \ve \mu \sqrt{|\log \ve |})  \|h \|_{m,\beta, \rho}, 
		\end{align*}
		where
		$\gamma_j^{-1} = \int_{\R^2} e^{\Gamma} Z_j^2$, $j=0,1$.
	\end{lemma}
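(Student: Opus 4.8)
The plan is to solve the linear projected problem \eqref{eee} in two nested steps: first the \emph{bare} problem with $\hat B_\ve$ removed, and then reinstate $\hat B_\ve$ as a perturbation. For the bare problem $\Delta\phi + e^\Gamma\phi + h = \sum_i d_i e^\Gamma Z_i$ in $B_\rho$, the classical nondegeneracy theory for the Liouville bubble (see \cite{BaPa}, and the linear theory developed for singular-perturbation constructions) provides, after imposing the two orthogonality conditions $\int_{B_\rho} (h - \sum_i d_i e^\Gamma Z_i)\, Z_i = 0$ for $i=0,1$, a solution $\phi_0 = \mathcal T_0[h]$ with the weighted bound $\|\phi_0\|_* \le C\|h\|_{m,\beta,\rho}$, for $m\in(2,3)$. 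The point of projecting only onto $Z_0$ and $Z_1$ (and not $Z_2$) is the parity hypothesis \eqref{even}: since $h$ is even in $y_2$ while $Z_2 = \partial_{y_2}\Gamma$ is odd in $y_2$, the $Z_2$-component of $h$ vanishes automatically, and one checks the constructed $\phi_0$ inherits the evenness in $y_2$, so the whole scheme stays within the even subspace. Solving the two scalar equations for $d_0, d_1$ against the Fredholm condition yields the stated leading expressions $d_i[h] = \gamma_i \int_{B_\rho} h Z_i + (\text{lower order})$ with $\gamma_i^{-1} = \int_{\R^2} e^\Gamma Z_i^2$; the error terms $O((\ve\mu\sqrt{|\log\ve|})^{m-2})$ for $d_0$ and $O(\ve\mu\sqrt{|\log\ve|})$ for $d_1$ come from the truncation of $\R^2$ to $B_\rho$, where $Z_0$ decays like $|y|^{-2}$ and $Z_1$ like $|y|^{-3}$, so the tail integrals $\int_{|y|>\rho} e^\Gamma |Z_i|$ scale like $\rho^{-(m-2)}$ modulo absorbing the weight, with $\rho^{-1}\sim \ve\mu\sqrt{|\log\ve|}$.

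The second step absorbs $\hat B_\ve[\phi] = B_\ve[\phi] + b_\ve(y)\phi$. By construction $B_\ve = \ve^2\mu^2 B(\ve\mu y)$ with $B$ the second-order operator of Proposition \ref{prop01}, whose coefficients are $O(\ve\mu|y|)$ on first derivatives up to $O(\ve^2\mu^2|y|^2)$ on second derivatives in the ball $|\ve\mu y|\le\delta$; together with \eqref{biest} for $b_\ve$, one gets $\|\hat B_\ve[\phi]\|_{m,\beta,\rho} \le C\,\ve\mu\sqrt{|\log\ve|}\,\|\phi\|_*$ — the gain of a power of $\ve\mu\sqrt{|\log\ve|}$ is what makes the operator a genuine contraction-sized perturbation. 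We then recast \eqref{eee} as the fixed-point problem $\phi = \mathcal T_0[-\hat B_\ve[\phi] - h]$ in the Banach space $(\,\{\,\phi \in C^{2,\beta}(B_\rho): \phi \text{ even in } y_2\,\},\ \|\cdot\|_*\,)$, define $T[h] := \phi$ and $d_i[h] := d_i[-\hat B_\ve[\phi] - h]$, and invoke the contraction mapping theorem for $\ve$ small. Linearity of $h \mapsto T[h]$ is immediate from linearity of $\mathcal T_0$ and of $\hat B_\ve$; the bound $\|T[h]\|_* \le C\|h\|_{m,\beta,\rho}$ follows from the Neumann-series estimate $\|T[h]\|_* \le C\|h\|_{m,\beta,\rho}/(1 - C\ve\mu\sqrt{|\log\ve|})$. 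Finally the asymptotic formulas for $d_i[h]$ follow by substituting $\phi = T[h] = \mathcal T_0[-h] + O(\ve\mu\sqrt{|\log\ve|}\|h\|)$ into $d_i[-\hat B_\ve[\phi]-h]$ and noting $\int_{B_\rho} \hat B_\ve[\phi]\, Z_i = O(\ve\mu\sqrt{|\log\ve|}\|h\|_{m,\beta,\rho})$, which is absorbed into the stated error, while the main term $\gamma_i\int_{B_\rho} h Z_i$ survives unchanged.

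I expect the main obstacle to be the first step: establishing the a priori estimate $\|\phi_0\|_* \le C\|h\|_{m,\beta,\rho}$ for the bare problem \emph{uniformly in} $\rho\to\infty$ in the weighted $C^{2,\beta}$ topology with weight exponent $m\in(2,3)$. This is the standard-but-delicate point in Liouville-type gluing: one argues by contradiction and blow-up, supposing $\|\phi_0^{(n)}\|_* = 1$ while $\|h^{(n)}\|\to 0$; the limit solves $\Delta\phi + e^\Gamma\phi = 0$ in $\R^2$ with $\phi$ bounded and even in $y_2$ and orthogonal (in the limiting sense) to $Z_0, Z_1$, hence $\phi \equiv 0$ by nondegeneracy, contradicting a concentration of mass which must then escape to $|y|\sim\rho$ — and there an explicit barrier/comparison argument using the radial behaviour of $\Delta + e^\Gamma$ (whose indicial roots at infinity govern the admissible weights) rules out boundary concentration precisely when $2 < m < 3$. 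The Hölder part of the norm is then recovered by interior elliptic (Schauder) estimates. All of this is parallel to the linear theory in \cite{guerra-musso} and the references therein, so in the write-up I would state the bare-problem solvability as the core lemma, prove the weighted estimate by the blow-up argument, and present the perturbation/fixed-point step as a short corollary.
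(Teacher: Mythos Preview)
Your overall architecture --- solve the bare projected problem, then absorb $\hat B_\ve$ by contraction, with the $Z_2$-projection eliminated by the evenness hypothesis --- matches the paper's. The substantive difference is in the bare step: the paper does \emph{not} work directly on $B_\rho$ or run a blow-up/compactness argument. Instead it extends $h$ to $\tilde h$ on all of $\R^2$ (supported in $B_{2\rho}$, with $\|\tilde h\|_{m,\beta}\le C\|h\|_{m,\beta,\rho}$) and invokes the explicit solution operator $\mathcal T$ of Lemma \ref{lemat}, quoted from \cite{ddmw2} and built by Fourier-mode decomposition on $\R^2$, which already comes with the weighted $C^{2,\beta}$ bound. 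The fixed point is then posed in the global weighted space and one restricts to $B_\rho$ at the end by setting $T[h]=\phi|_{B_\rho}$. This bypasses both the boundary-condition issue you leave unspecified on $\partial B_\rho$ and the blow-up step you correctly flag as the main obstacle; the paper's route trades that analysis for a citation.

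Two details in your sketch are off and would need fixing if you pursue your route. First, $Z_0=\dfrac{2(1-|y|^2)}{1+|y|^2}$ is bounded (it tends to $-2$) and $Z_1=-\dfrac{4y_1}{1+|y|^2}=O(|y|^{-1})$, not $O(|y|^{-2})$ and $O(|y|^{-3})$ as you write; in the paper the error scales in $d_0,d_1$ arise from the extension tail $\int_{B_{2\rho}\setminus B_\rho}\tilde h\,Z_i$ (of size $\rho^{2-m}$ and $\rho^{1-m}$) together with $\int \hat B_\ve[\phi]\,Z_i=O(\rho^{-1})\|\phi\|_*$, not from tails of $e^\Gamma|Z_i|$. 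Second, the contraction smallness of $\|\hat B_\ve[\phi]\|_{m,\beta}$ is only $O(\delta/|\log\ve|)$, dominated by $B_\ve$ whose leading coefficients scale like $R_\ve\,\ve\mu|y|\lesssim \delta/|\log\ve|$ on $B_\rho$; this is not the $O(\ve\mu\sqrt{|\log\ve|})$ you claim, though it is still small enough for the fixed point.
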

	
	Let us consider the operator $\vp [\phi] $ defined in Proposition \ref{outproblem}. Then we find a solution to the first equation in \eqref{inout} of the form $\phi = \phi_1 + \phi_2$ as in \eqref{phixx},  if $\phi_1$ solves  
	\be\label{eqphi2}
	\Delta_y \phi_1 + e^{\Gamma } \phi_1 + \hat B_\ve [\phi_1] + \hat B_\ve [\phi_2] +H(\phi,\vp [\phi] ,r)=d_1 e^{\Gamma }Z_1 + d_0 e^\Gamma Z_0 \quad \mbox{in}\quad B_\rho,
	\ee
	where $\phi_2$ is given by \eqref{phi2ex}, that is 
	\be\label{phi2ex2}
	\phi_{2}(y) = d_{0}\,\phi_{2o}(y)\quad \mbox{with}\quad  \phi_{2o}(y):=\left( \frac{4}{3}\frac{|y|^2-1}
	{|y|^2+1}\log(1+|y|^2)-\frac 83\frac{1}
	{|y|^2+1}\right),
	\ee
	recalling that  
	$
	\Delta \phi_2 + e^\Gamma \phi_2 + d_0 e^\Gamma Z_0 =0.$ Here
	by Lemma \ref{prop1}
	\begin{equation}\label{c0c1}
		d_0=d_0[H(\phi,\vp [\phi] ,\alpha)+\hat B_\ve [\phi_2]],\quad \mbox{and}\quad d_1=d_1[H(\phi,\vp [\phi] ,\alpha)+\hat B_\ve [\phi_2]].
	\end{equation}
	See \eqref{in-out-1} and \eqref{Bhat} for the definitions of $H$ and $\hat B_\ve$ respectively.
	\medskip

	\begin{prop}\label{propphi1}
		Assume $\alpha$ satisfies the bound in \eqref{alpha} and let $a>0$ be given by Proposition \ref{prop4}. Choose  $m$ such that $2<m<2+a$.  Then there exists a unique solution $(\phi_1,d_0) $ to \eqref{eqphi2}, \eqref{phi2ex2}, and \eqref{c0c1}, that satisfies
		\be \label{esttt11}
		\| \phi_1 \|_* \leq C \, \ve \, \mu \, \sqrt{|\log \ve |},
		\quad |d_0|\leq C (\ve \, \mu \, \sqrt{|\log \ve|})^{m-1}.
		\ee
		Besides, there is a continuous dependence of $\phi_1$ and $d_0$ on $r$. See \eqref{normin} for the definition of $\| \cdot \|_*$.
	\end{prop}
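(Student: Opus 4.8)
The plan is to recast the system \eqref{eqphi2}, \eqref{phi2ex2}, \eqref{c0c1} as a fixed-point problem for the pair $(\phi_1,d_0)$ and solve it by the Banach contraction principle in the ball determined by \eqref{esttt11}. Given such a pair, set $\phi=\phi_1+d_0\phi_{2o}$, $\phi_2=d_0\phi_{2o}$, let $\vp=\vp[\phi,\alpha]$ be the outer solution produced by Proposition \ref{outproblem}, and put
\[
h=h[\phi_1,d_0]:=\hat B_\ve[\phi_2]+H\big(\phi,\vp[\phi,\alpha],\alpha\big),
\]
with $H$ as in \eqref{in-out-1}. Since $\Delta\phi_2+e^{\Gamma}\phi_2+d_0e^{\Gamma}Z_0=0$, a function $\phi_1=T[h]$ solving $\Delta\phi_1+e^{\Gamma}\phi_1+\hat B_\ve[\phi_1]+h=d_1e^{\Gamma}Z_1+d_0e^{\Gamma}Z_0$ in $B_\rho$ together with $d_0=d_0[h]$, in the sense of Lemma \ref{prop1}, yields through $\phi=\phi_1+\phi_2$ a solution of \eqref{eqphi2}. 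Thus the map to iterate is $(\phi_1,d_0)\mapsto(T[h],d_0[h])$, where $T$ and the scalars $d_0[\cdot],d_1[\cdot]$ are those of Lemma \ref{prop1}; the remaining constant $d_1=d_1[h]$ is not required to vanish here, it will be annihilated in the next proposition by the choice of $\alpha$.

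Before invoking Lemma \ref{prop1} one checks that $h$ is even in $y_2$: this follows from \eqref{rmk1} for $\tilde E=\ve^2\mu^2S[\psi_*]$, from \eqref{beven} for $b_\ve$, from the radial symmetry of $\phi_{2o}$, and from the invariance of the whole construction under the reflection across the $x_1$-axis (which fixes $P_1$ and permutes the remaining $P_j$), inherited by $\vp$ from the reflection symmetry of $\psi_*$ and of the outer source. Consequently the $Z_2$-projection of the right-hand side vanishes automatically, as required by Lemma \ref{prop1}. Next one estimates $\|h\|_{m,\beta,\rho}$. The dominant term is $\tilde E$: by Proposition \ref{prop4} and the restriction $2<m<2+a$ one has $\|\tilde E\|_{m,\beta,\rho}\le C\,\ve\mu\sqrt{|\log\ve|}$. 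The other pieces are of strictly higher order: $\ve^2\mu^2F'(\psi_*-\tfrac{\alpha}{2}|\log\ve|\,|x|^2)\vp$ is $O(\ve^{1+\sigma})$ in $\|\cdot\|_{m,\beta,\rho}$ by \eqref{estout1} and the localization \eqref{cutout}; $\ve^2\mu^2\mathcal N(\sum_i\eta_i\Phi_i+\vp)$ is quadratically small by \eqref{expN} and the bounds \eqref{ip1}; and $\hat B_\ve[\phi_2]=d_0(B_\ve+b_\ve)[\phi_{2o}]$ is controlled by weighing the logarithmic growth of $\phi_{2o}$ against the small coefficients of $B$ (which carry factors $R_\ve=r/\sqrt{|\log\ve|}$ and positive powers of $\ve\mu$). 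Hence $\|h\|_{m,\beta,\rho}\le C\,\ve\mu\sqrt{|\log\ve|}$, and Lemma \ref{prop1} gives $\|\phi_1\|_*=\|T[h]\|_*\le C\,\ve\mu\sqrt{|\log\ve|}$.

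The delicate step, and the main obstacle, is the bound $|d_0|=|d_0[h]|\le C(\ve\mu\sqrt{|\log\ve|})^{m-1}$, which is much sharper than the naive $O(\ve\mu\sqrt{|\log\ve|})$ coming from $\|h\|_{m,\beta,\rho}$. I would use the representation $d_0[h]=\gamma_0\int_{B_\rho}h\,Z_0+O\big((\ve\mu\sqrt{|\log\ve|})^{m-2}\big)\|h\|_{m,\beta,\rho}$ from Lemma \ref{prop1}, whose error term is already $O\big((\ve\mu\sqrt{|\log\ve|})^{m-1}\big)$, and then show that the projection $\int_{B_\rho}h\,Z_0$ enjoys enough cancellation. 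Here one uses: (i) the leading part of $\tilde E$, namely $\ve\mu\,y_1\,U(y)(c_1\Gamma(y)+\mathcal A(\alpha))$ from \eqref{ee2n}, is odd in $y_1$ while $Z_0$ is even in $y_1$, so it does not contribute; the surviving radial part of $\tilde E$ is $O(\ve^2\mu^2)$ times polylogarithmic factors, which integrates against the bounded $Z_0$ to a quantity that is $o\big((\ve\mu\sqrt{|\log\ve|})^{m-1}\big)$ since $m<3$; (ii) the normalization identities $\int_{\R^2}e^{\Gamma}Z_0=0$ and $\int_{\R^2}e^{\Gamma}y_i\,Z_0=0$, which kill the constant and linear-in-$y$ contributions of the smooth factor $\vp$ in $\int(\ve^2\mu^2F'\vp)Z_0$, reducing it to a quantity controlled by $\ve^2\mu^2\|D^2\vp\|$ and again negligible; the contributions of $\hat B_\ve[\phi_2]$ and $\ve^2\mu^2\mathcal N(\cdot)$ are likewise subleading. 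Combining these with the error term gives the claimed bound on $d_0$.

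Finally, to close the argument I would prove that $(\phi_1,d_0)\mapsto(T[h],d_0[h])$ is a contraction on the ball \eqref{esttt11}. Since $T$ and $d_i[\cdot]$ are linear, it suffices to bound $\|h[\phi_1^1,d_0^1]-h[\phi_1^2,d_0^2]\|_{m,\beta,\rho}$ and the corresponding difference of $Z_0$-projections by $o(1)\big(\|\phi_1^1-\phi_1^2\|_*+|d_0^1-d_0^2|\big)$; this uses the Lipschitz estimate \eqref{estout2} for $\vp[\cdot]$ multiplied by the small prefactor $\ve^2\mu^2$ in front of $F'\vp$, the quadratic (hence Lipschitz-small) character of $\mathcal N$, and the smallness of $\hat B_\ve$. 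The contraction mapping theorem then produces a unique $(\phi_1,d_0)$ satisfying \eqref{esttt11}. Continuous dependence on $r$ (and on $\alpha$ in the range \eqref{alpha}) follows from the Lipschitz dependence of all the data — estimate \eqref{estout3} for $\vp$, together with the smooth dependence on $r$ and $\alpha$ of $\psi_*$, $\mu$, $b_\ve$, $\hat B_\ve$, and $F$ — combined with uniqueness of the fixed point.
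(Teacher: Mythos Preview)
Your approach is essentially the paper's own: recast \eqref{eqphi2}--\eqref{c0c1} as a fixed point for $(\phi_1,d_0)$, invoke Lemma~\ref{prop1}, control $\|h\|_{m,\beta,\rho}$ via Proposition~\ref{prop4} and Proposition~\ref{outproblem}, and obtain the sharp bound on $d_0$ by showing that $\int_{B_\rho}hZ_0$ carries extra cancellation. Your parity/orthogonality argument for that integral (odd part of $\tilde E$ against radial $Z_0$, and $\int e^\Gamma Z_0=\int e^\Gamma y_iZ_0=0$) is exactly the content the paper hides behind the reference to \cite{guerra-musso}, Proposition~7.1, summarized there as \eqref{HBZ0}.

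There is one point where your write-up is looser than the paper's, and it matters. In the contraction step you claim it suffices to bound $\|h^1-h^2\|_{m,\beta,\rho}$ by $o(1)\bigl(\|\phi_1^1-\phi_1^2\|_*+|d_0^1-d_0^2|\bigr)$. But $\hat B_\ve[\phi_2^1-\phi_2^2]=(d_0^1-d_0^2)\hat B_\ve[\phi_{2o}]$, and $\|\hat B_\ve[\phi_{2o}]\|_{m,\beta,\rho}$ is of size $(\ve\mu\sqrt{|\log\ve|})^{2-m}/|\log\ve|$ (see \eqref{f3}/\eqref{Bhatdif}), which blows up since $m>2$; so the naive unweighted contraction fails in the $\phi_1$ component. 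The fix, implicit in the paper's estimates, is to use the weighted norm $\|(\phi_1,d_0)\|_w=\|\phi_1\|_*+(\ve\mu\sqrt{|\log\ve|})^{2-m}|d_0|$: then \eqref{Bhatdif} contributes $O(|\log\ve|^{-1})\|\cdot\|_w$ to the $\phi_1$ difference, and combining with \eqref{Hdif}, \eqref{estout2} and the $d_0$-formula of Lemma~\ref{prop1} gives a genuine contraction. Once you insert this weighting, your argument goes through unchanged.
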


	\begin{proof}
		We state Problem  \eqref{eqphi2} as a fixed point equation for $\phi_1$
		$$
		(\phi_1, d_0) = \overline{{\mathcal A}} (\phi_1,d_0) , 
		$$
		where
		$$
		\phi_1= { T} \left( H (\phi_1+\phi_2, \vp[\phi_1+\phi_2], \alpha )+\hat B_\ve[\phi_2] \right)
		$$
		$$
		d_0=\gamma_0\int\limits_{\RR^2} \left( H (\phi_1+\phi_2, \vp[\phi_1+\phi_2], \alpha )+\hat B_\ve[\phi_2] \right) \, Z_0
		$$
		and 
		$
		\phi_2(y)=d_0 \phi_{2o}(y)
		$, given by \eqref{phi2ex2}.
		We recall the definition of $H,$  
		$$
		H(\phi , \varphi , \alpha ) =  \ve^2 \mu^2  {\mathcal N} \left(\sum\limits_{i=1}^N\eta_i \Phi_i + \vp \right) +\ve^2 \mu^2 \, F'(\psi_* -{\alpha \over 2} |\log \ve| \, |x|^2) \varphi +    \ve^2 \mu^2 S(\psi_*) 
		$$
		as given in
		\eqref{in-out-1}. Using \eqref{rmk1}, the dihedral symmetry \eqref{dihe} of $\vp$,  assumptions \eqref{ip1} on $\phi$, we have, by uniqueness of $\vp$ in equation \eqref{nn1} (Proposition \ref{outproblem}), that $\tilde \vp(y)=\vp(M\ve y+P_1)$ satisfies also \eqref{even},  and consequently we obtain that $H(\phi, \vp, r)$ satisfies \eqref{even}. On the other hand, since $\phi_2$ is a function of $|y|^2$, and the form of $\hat B_\ve$ given in \eqref{Bhat}, we have that $\hat B_\ve[\phi_2]$ satisfies 
		\eqref{even}.
		
		\medskip
		Let ${\mathcal B} = \{ (\phi_1,d_0) \, : \phi_1(y_1, y_2) = \phi_1 (y_1 , - y_2), \, \, \| \phi_1 \|_* \leq c \, \ve \, \mu \, \sqrt{|\log \ve|},\, \, |d_0|\leq c (\ve \, \mu \, \sqrt{|\log \ve|})^{m-1}\}$, where $\| \cdot \|_*$ is defined in \eqref{normin}. From \eqref{SPsi01a} and Proposition \ref{prop4}, we get that
		$$
		\| \ve^2 \mu^2 S(\psi_* ) \|_{m, \beta , \rho} \leq C \, \ve \, \mu \, \sqrt{|\log \ve |}.
		$$
		From the form of $F(\psi_* -{\alpha \over 2} |\log \ve|\, |x|^2)$ in \eqref{defF}, the estimate \eqref{rhsf}, and the form of ${\mathcal N}$ in\eqref{nonl},  we find for some $a\in (0,1)$,
		\be\label{f1b}
		\begin{aligned}
			|H(\phi_1 + \phi_2 , \varphi , \alpha ) | &\leq  |\tilde E|+
			{8 \over 1+ |y|^{2}} \left({1\over (1+ |y|^{2}) } +  {C \ve\mu \sqrt{|\log \ve |} \over  (1+ |y|^{a})}\right)|\varphi| \\
			&+ {C \over (1+ |y|)^4} \left(|\varphi|^2 + \sum\limits_{i=1}^N(|\eta_i \Phi_{i,1}|^2 + |\eta_i \Phi_{i,2}|^2) \right),
		\end{aligned}
		\ee
		where $\Phi_{i,\ell} (x) = \phi_\ell(y)$ with  $y= {M^{-1} (Q_i^{-1}x-P_1) \over \ve \mu }$ for $\ell=1,2$. Here 
		\[
		|\tilde E | \leq C  { \ve\mu \sqrt{|\log \ve |} \over (1+ |y|^{2+a})} . 
		\]
		Using the assumptions \eqref{estout1} on $\varphi$, and 
		the definiton of ${\mathcal B}$ to estimate $\Phi_{i,1}$ and $\Phi_{i,2}$, we get by \eqref{f1b} that
		\be \label{f2}
		\|H( \phi_1 + \phi_2 , \varphi , \alpha) \|_{m, \beta, \rho} \leq C \ve\mu \sqrt{|\log \ve|},
		\ee
		since $m<2+a$. From \eqref{Bexpr}, \eqref{biest}, and \eqref{By}, we get
		\be\label{f2b}
		\begin{aligned}
			| \hat B_\ve [\phi_{2}] | &\leq C\ve \mu \left(\frac 1{\sqrt{|\log\ve|}}(|D_y\phi_{2}|+|y| |D^2_y\phi_{2}|) + {\sqrt{|\log \ve |} \over 1+ |y|^{2+a}} |\phi_{2}|\right)
		\end{aligned}
		\ee
		and using again that $m<2+a$, we have
		\be \label{f3}
		\begin{aligned}
			\|\hat B_\ve&[\phi_{2}]  \|_{m, \beta, \rho} \leq C (|\log\ve|)^{-1}(\ve\mu \sqrt{|\log \ve |})^{2-m} d_0\\
			& + C  (\ve\mu \sqrt{|\log \ve |}) |\log \ve | d_0 
			\leq   C (\ve\mu \sqrt{|\log \ve |})^{2-m} d_0 \leq C \ve\mu \sqrt{|\log \ve |}.
		\end{aligned}
		\ee
		Hence, from Lemma \ref{prop1} we obtain that
		$$
		\|   { T} (  H (\phi_1+\phi_2, \vp[\phi_1+\phi_2], \alpha )+\hat B_\ve[\phi_2])
		\|_* \leq C_1 \, \ve \mu  \sqrt{|\log \ve|}$$
		for some $C_1 >0$. Using \eqref{f2b}, we have that $\int_{B_\rho} \hat B_\ve[\phi_2]Z_0= O\left(\frac{d_0}{|\log\ve|}\right)$ and so  similarly to the proof of Proposition 7.1 in \cite{guerra-musso}, we have 
		\begin{equation}\label{HBZ0}
			\int_{B_\rho} (H (\phi_1+\phi_2, \vp[\phi_1+\phi_2], \alpha  )+\hat B_\ve[\phi_2])Z_0= O\left(\frac{d_0}{|\log\ve|}\right).
		\end{equation}
		By using the formula in Lemma \ref{prop1}, we have 
		$$
		|d_0|\leq C (\ve\mu \sqrt{|\log \ve |})^{m-2}\| H (\phi_1+\phi_2, \vp[\phi_1+\phi_2], r )+\hat B_\ve[\phi_2]\|_{m,\beta,\rho}+O\left(\frac{d_0}{|\log\ve|}\right).
		$$
		So we get by \eqref{f2} and \eqref{f3} that
		$$
		|d_0|\leq C (\ve\mu \sqrt{|\log \ve |})^{m-1}.
		$$
		We can choose the constant $c$ in the definition of ${\mathcal B}$ in such a way that $\overline{\mathcal A} $ sends elements in ${\mathcal B}$ to ${\mathcal B}$. 
		
		\medskip
		Take now $\phi_1^1$, $\phi_1^2 \in {\mathcal B}$, with respective $\phi_2^1$, $\phi_2^2$. Then using \eqref{nonl}, we have
		\begin{align}\nonumber
			H (\phi^1, \vp[\phi^1], \alpha) &- H (\phi^2, \vp[\phi^2], \alpha )  =  \ve^2 \mu^2  {\mathcal N} \left(\sum\limits_{i=1}^N\eta_i \Phi_i^1 + \vp [\phi^1]\right) \\
			&-  \ve^2 \mu^2  {\mathcal N} \left(\sum\limits_{i=1}^N\eta_i \Phi_i^2 + \vp [\phi^2] \right) + \ve^2 \mu^2 \, F'(\psi_* -{\alpha \over 2} |\log \ve|\, |x|^2) [ \varphi [\phi^1] - \varphi [\phi^2] ] \label{difH}
		\end{align}
		where $\Phi_i^\ell (x) = \Phi^\ell (Q_i^{-1} x)$, $\Phi^\ell (x) = \phi^\ell (y) $ for $y= {M^{-1} (x-P_1) \over \ve \mu }$.
		Also we have
		\begin{align*}
			\hat B_\ve[\phi_2^1]-\hat B_\ve[\phi_2^2]=
			(d_0^1-d_0^2)\hat B_\ve[\phi_{2o}]
		\end{align*}
		where $d_0^\ell=\gamma_0\int\limits_{\RR^2}(
		H (\phi^\ell, \vp[\phi^\ell], \alpha )+\hat B_\ve[\phi_2^\ell]) Z_0$ for $\ell=1,2.$ As in \eqref{f3}, we have
		\begin{equation}\label{Bhatdif}
			\|\hat B_\ve[\phi_2^1]-\hat B_\ve[\phi_2^2]\|_{m,\beta,\rho}\leq C|d_0^1-d_0^2| (|\log\ve|)^{-1}(\ve\mu\sqrt{|\log\ve|})^{2-m}.
		\end{equation}
		By \eqref{difH}, using \eqref{expN} and the bounds $\|\phi_1^\ell\|\leq c (\ve\mu\sqrt{|\log\ve|})$,  $|d_0^\ell|\leq c(\ve\mu\sqrt{|\log\ve|})^{m-1}$, and \eqref{estout1}, we have  
		\begin{align} \nonumber
			\|  H (&\phi^1, \vp[\phi^1], \alpha )
			-H (\phi^2, \vp[\phi^2], \alpha )\|_{m,\beta,\rho}\leq \\ \label{Hdif}
			&C(\|\phi_1^1-\phi_1^2\|_*^2+|d_0^1-d_0^2|^2+\|(1+|x|^2)^{-1}(\varphi^1-\varphi^2)\|_\infty^2) \\ \nonumber
			&+C(\ve\mu\sqrt{|\log\ve|})(\|\phi_1^1-\phi_1^2\|_*+|d_0^1-d_0^2|+\|(1+|x|^2)^{-1}(\varphi^1-\varphi^2)\|_\infty). 
		\end{align}
		Then, by  Lemma \ref{prop1}, and \eqref{HBZ0}, together with \eqref{Hdif} and \eqref{Bhatdif}, we get 
		\begin{align*}
			|d_0^1-&d_0^2|\leq C (\ve\mu \sqrt{|\log \ve |})^{m-2}\|  H (\phi^1, \vp[\phi^1], \alpha )
			-H (\phi^2, \vp[\phi^2], \alpha  )+\hat B_\ve[\phi_2^1]- \hat B_\ve[\phi_2^2]\|_{m,\beta,\rho} \\
			&\qquad +O\left(\frac{|d_0^1-d_0^2|}{|\log\ve|}\right) \\
			\leq & C\left(\frac{|d_0^1-d_0^2|}{|\log\ve|}+(\ve\mu\sqrt{|\log\ve|})^{m-1} 
			(\|\phi_1^1-\phi_1^2\|_*+|d_0^1-d_0^2|+\|(1+|x|^2)^{-1}(\varphi^1-\varphi^2)\|_\infty)\right).
		\end{align*}
		On the other hand by Lemma \ref{prop1}, we get
		$$
		\|\phi^1_1-\phi_1^2\|_*\leq C \|  H (\phi^1, \vp[\phi^1], \alpha )
		-H (\phi^2, \vp[\phi^2], \alpha  )+\hat B_\ve[\phi_2^1]- \hat B_\ve[\phi_2^2])\|_{m,\beta,\rho}
		$$
		and using \eqref{Bhatdif}, \eqref{Hdif}, and \eqref{estout2}, we can show that $\overline{{\mathcal A}}$ is a contraction mapping in ${\mathcal B}$. This gives the existence and uniqueness of a solution to \eqref{eqphi2} satisfying \eqref{esttt11}. 
	\end{proof}
	
	\medskip
	We are now in a position to conclude with
	
	\begin{proof}[\it Proof of Theorem \ref{teo2}.] \  The result of Theorem \ref{teo2} follows from solving 
		\eqref{P}. Problem \eqref{P}
		admits a solution of the form
		$$
		\psi= \psi_* + \psi_\#
		$$
		provided that $\psi_\#$ solves \eqref{fin} and it is smaller than $\psi_*$. A function $\psi_\#$ of the form \eqref{ansatz} solves \eqref{fin} if $(\vp, \phi, \alpha )$ satisfy \eqref{inout}. So far we have proven the existence of 
		$\vp$ and $\phi$ solving the first two equations in \eqref{inout}. Besides $\varphi$ satisfies the bound \eqref{estout1}, while $\phi$ has the form $\phi= \phi_1 + d_0\phi_{2o}$ as in \eqref{phixx}-\eqref{eqphi1}-\eqref{phi2ex}. Besides $(\phi_1,d_0)$ satisfies \eqref{esttt11}.
		
		The final step in our argument is to solve the third equation in \eqref{inout}: find $\alpha$ satisfying \eqref{alpha} such that
		$$
		d_1 [ \vp , \phi , \alpha ]=0.
		$$
		Consider the first equation in \eqref{inout}, multiply it against $Z_1$ and integrate over the ball $B_\rho (0)$. We obtain
		\[
		\begin{aligned}
			d_1 \, &\int_{B_\rho} e^{\Gamma} Z_1^2 dy= I + II \\
			I&=  \int_{B_\rho} H(\phi,\vp,\alpha )  Z_1 \, dy \\
			II&=  \int_{B_\rho} \left( \Delta_y \phi + e^{\Gamma } \phi \right) Z_1 \, dy + \int_{B_\rho} \left( B_\ve [\phi ] + b_\ve (y) \phi \right) Z_1 \, dy.
		\end{aligned}
		\]
		We claim that
		\be \label{esI}
		\begin{aligned}
			I&=  2 \ve \mu \sqrt{|\log \ve |} \left( {  r \over  h^2}  - { (N-1) \over r} -{\alpha r \over 2} \right) \left( \int_{\R^2}  U y_1 Z_1 \right) 
			\\
			&+
			\ve \mu  \frac{\log|\log\ve|}{\sqrt{|\log\ve|}}
			\, {\bf Y}_1 (\alpha )
		\end{aligned} 
		\ee
		and
		\be \label{esII}
		II= (\ve \mu )^{2-a}  \, {\bf Y}_1 (\alpha)
		\ee
		where ${\bf Y}_1(\alpha )$ denotes a explicit smooth function,  whose explicit expression changes from line to line. It is uniformly bounded  as $\ve \to 0$  for values of $\alpha$ satisfying   \eqref{alpha}.    
		In \eqref{esII}, the number $a \in (0,{1\over 2})$ is fixed and small.
		
		\medskip
		Assume the validity of \eqref{esI} and \eqref{esII}. By continuity the equation
		$$
		d_1 (\vp , \phi , \alpha )= 0
		$$
		has a solution $r$ that satisfies
		$$
		\alpha= 2 \left( {1\over h^2} - {N-1 \over r^2} \right)   + O\left(\frac{\log|\log\ve|}{|\log\ve|}\right)$$
		as $\ve \to 0$. Thus $\alpha $ satisfies \eqref{alpha}.
		This proves the existence of the solutions predicted by Theorem \ref{teo2}. 
		
		\medskip
		In the rest of the proof we denote with ${\bf Y} (\alpha)$ a generic smooth function of $\alpha $, which is bounded as $\ve \to 0$ for $\alpha$ satisfying \eqref{alpha}.
		
		\medskip
		{\it Proof of \eqref{esI}.} \ \ We recall that
		$B_\rho (0)$ is the ball centered at $0$ of radius $\rho$, with 
		$\rho= \frac {\delta_1}{\ve\mu\sqrt{|\log\ve|}} $ and that 
		$$
		H(\phi, \vp, \alpha )  =  \ve^2\mu^2 {\mathcal N}  \left(\sum\limits_{i=1}^N\eta_i \Phi_i + \vp \right) + 
		(e^{\Gamma(y)}+b_\ve(y))\varphi + \tilde E(y), $$
		with $b_\ve$ given by \eqref{defbe}, where  $\tilde E(y)= \ve^2 \mu^2 S[\psi_* ] (P_1 + \ve \mu My).$
		\smallskip
		
		Using the result in Proposition \ref{prop4}, \eqref{ee2n} and \eqref{SPsi01a} we get
		\begin{align*}
			\int_{B_\rho} \tilde E(y) \, y_1 \, Z_1 dy&=
			\ve \mu \int_{B_\rho} y_1 U(y)  \left(c_{1} \Gamma_0 (y) + {\mathcal A} (r) \right) \, Z_1 dy + \ve^2 \mu^2 {\bf Y} (\alpha)\\
			&=  2 \ve \mu \sqrt{|\log \ve |} \left( {  r \over  h^2}  - { (N-1) \over r} -{\alpha r \over 2} \right) \left( \int_{\R^2}  U y_1 Z_1 \right) 
			\\
			&+
			\ve \mu  \frac{\log|\log\ve|}{\sqrt{|\log\ve|}}
			\, {\bf Y}_1 (\alpha)
		\end{align*}
		We recall the definition of ${\mathcal A} (\alpha)$ in \eqref{defA}.
		
		\medskip
		On the other hand, we have
		\begin{align*}
			\left| \ve^2 \mu^2 {\mathcal N}\left  (\sum\limits_{i=1}^N\eta_i \Phi_i + \varphi \right)\right| & \leq C  \ve^2 \mu^2  F(\psi_* -{\alpha \over 2} |\log \ve|  |x|^2)| \left( \sum_{i=1}^N | \eta_i \Phi_i |^2 + |\vp |^2 \right) \\
			&\leq C U(y) \left( \sum_{i=1}^N | \eta_i \Phi_i |^2 + |\vp |^2 \right), \\
			|\ve^2 \mu^2 F' (\psi_* -{\alpha \over 2} |\log \ve|\, |x|^2) \varphi  | &\leq C U(y)|\varphi|.
		\end{align*}
		From the control we have on $\phi$ and $\vp$ from \eqref{esttt11}  and \eqref{estout1} we conclude that
		\begin{align*}
			\int_{B_\rho} \ve^2 \mu^2 {\mathcal N}  \left(\sum\limits_{i=1}^N\eta_i \Phi_i + \varphi \right)  Z_1 \, dy &=  (\ve \mu)^{2-\alpha} {\bf Y} (\alpha )
		\end{align*}
		for some $\alpha >0$ small. This concludes the proof of \eqref{esI}.
		
		\medskip 
		{\it Proof of \eqref{esII}.} \ \ Since $\Delta Z_1 + e^\Gamma Z_1$,  integration by parts gives
		\begin{align*}
			\int_{B_\rho} (\Delta \phi + e^\Gamma \phi ) Z_1 \, dy &= \int_{\partial B_\rho} \left( {\pp Z_1 \over \pp \nu } \phi - {\pp \phi \over \pp \nu } Z_1\right).
		\end{align*}
		A direct computation gives
		$$
		\left| \int_{\partial B_\rho} \left( {\pp Z_1 \over \pp \nu } \phi - {\pp \phi \over \pp \nu } Z_1\right) \right| \leq C (\ve \mu \sqrt{|\log \ve|})^{m-1} \, \| \phi_1 \|_*+ (\ve \mu \sqrt{|\log \ve|})|d_0|.
		$$
		An inspection to the form of the operator $B$ in Proposition \ref{prop01} gives that, in $B_\rho$, $B_\ve$ has the form
		\be\label{By}
		\begin{aligned}
			B_\ve &=
			\left( -2{R h \over  (h^2+ R^2)^{3/2}} \, \ve \mu y_1 + O(|\ve \mu y|^2 )\right) \pp_{y_1 y_1} + O(|\ve \mu y|^2 ) \pp_{y_2 y_2} \\
			&-\left( 2{R \over h\sqrt{h^2+ R^2} } \ve \mu y_2 + O(|\ve \mu y|^2) \right) \pp_{y_1 y_2}\\
			&
			- \left( {\ve \mu R \over h\sqrt{h^2+ R^2} } \left({2h^2\over h^2+ R^2} +1 \right)  + O((\ve \mu)^2|y|) \right) \pp_{y_1 } \\
			&- \left(\frac{\ve^2 \mu^2y_2}{h^2+R^2} \left({2 h^2\over h^2+ R^2} +1 \right)+O((\ve \mu)^3|y|^2)\right) \pp_{y_2 },
		\end{aligned}
		\ee 
		as $\ve \to 0$. Hence 
		$$
		\left|\int_{B_\rho} B_\ve [\phi] Z_1 \, dy\right| \leq C {\ve \mu \over \sqrt{|\log \ve |}} \, [\| \phi_1 \|_*+|d_0|].$$
		Also, using \eqref{biest} we get
		$$
		\left|\int_{B_\rho}b_\ve(y) \phi  Z_1 \, dy\right| \leq C \ve \mu  \sqrt{|\log \ve |} \, [\| \phi \|_* +|d_0|]. $$
		Using \eqref{esttt11} we get \eqref{esII}. This concludes the proof of Theorem \ref{teo2}.
	\end{proof}

	\appendix
	
	\section{}\label{AppeA}
	
	\begin{proof}[Proof of Proposition \ref{even-ness}.]  For each angle $\theta_j$ the corresponding symmetric angle respect to $x_1$ is given by the $\theta_i=2\pi-\theta_j$ , so we have $i=N-j+2$ and $Q_j=Q_i^{-1}=Q_i^{T}.$ Considering the matrix  $J=\left(\begin{array}{cc} 1 & 0 \\ 0 & -1 \end{array}\right)$, which satisfies $J^{-1}=J,$ we have  $Q_j^{-1}=JQ_i^{-1}J$. Define now $\bar z=J z$, we need to prove that $\Psi_0(\bar z)=\Psi_0(z)$. By definition \eqref{defpsi2}, we have $\Psi_{ \ve \mu}(z)=\Psi_{ \ve \mu}(J z)=\Psi_{ \ve \mu}(\bar z)$. On the other hand, using that $P_i=JP_j$, $JM
		J=M$, and $JM^{-1}
		J=M^{-1}$, we get
		\begin{align*}
			\Psi_{ \ve \mu}(M_j^{-1}(P_1-P_j+M\bar z)) 
			&=\Psi_{ \ve \mu}(M^{-1}Q_j^{-1}(P_1-P_j+M\bar z)) \\
			&= \Psi_{ \ve \mu}(M^{-1}JQ_i^{-1}(P_1-P_i+J M J z)) \\
			&= \Psi_{ \ve \mu}(JM^{-1}JQ_i^{-1}(P_1-P_i+M z)) \\
			&= \Psi_{ \ve \mu}(M^{-1}Q_i^{-1}(P_1-P_i+M z)) \\
			&= \Psi_{ \ve \mu}(M_i^{-1}(P_1-P_i+M z)).
		\end{align*}
		Then
		$$
		\sum_{j=1}^N \Psi_{ \ve \mu}(M_j^{-1}(P_1-P_j+M\bar z))=\sum_{i=1}^N \Psi_{ \ve \mu}(M_i^{-1}(P_1-P_i+Mz))
		$$
		which conclude the proof of \eqref{symmetryPsi}.
	\end{proof}

	\begin{prop}\label{proppsi}
		\begin{eqnarray}\label{sumpsi1}
			\sum_{j=2}^N \Psi_{ \ve \mu}(M_j^{-1}(P_1-P_j))= \sum_{j\neq i}^N \Psi_{ \ve \mu}(M_j^{-1}(P_i-P_j))
		\end{eqnarray}
	\end{prop}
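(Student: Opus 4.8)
The plan is to deduce \eqref{sumpsi1} from a relabelling of the summation index that uses only the symmetry relations $M_j=Q_jM$ and $P_j=R_\ve Q_j(1,0)$ recorded in \eqref{defMj} and \eqref{points}, together with the $2\pi$-periodicity of $\theta\mapsto Q_\theta$ and the fact that $\theta_\ell=\tfrac{2\pi}{N}(\ell-1)$. First I would record two elementary identities. Given $i,j\in\{1,\dots,N\}$, let $k=k(i,j)\in\{1,\dots,N\}$ be the unique index with $k\equiv j-i+1 \pmod N$. Then $\theta_k$ and $\theta_j-\theta_i$ differ by an integer multiple of $2\pi$, so $Q_i^{-1}Q_j=Q_{\theta_j-\theta_i}=Q_{\theta_k}=Q_k$ and hence $Q_j^{-1}Q_i=Q_k^{-1}$. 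Using this together with $P_\ell=R_\ve Q_\ell(1,0)$ one gets
\[
Q_i^{-1}P_j=R_\ve Q_i^{-1}Q_j(1,0)=R_\ve Q_k(1,0)=P_k,\qquad Q_i^{-1}P_i=R_\ve(1,0)=P_1 .
\]

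Next I would transform a generic term of the right-hand sum in \eqref{sumpsi1}. Writing $M_j^{-1}=M^{-1}Q_j^{-1}$ and inserting $Q_iQ_i^{-1}=\mathrm{Id}$,
\[
M_j^{-1}(P_i-P_j)=M^{-1}Q_j^{-1}Q_i\big(Q_i^{-1}(P_i-P_j)\big)=M^{-1}Q_k^{-1}(P_1-P_k)=M_k^{-1}(P_1-P_k),
\]
where I used $Q_j^{-1}Q_i=Q_k^{-1}$ and $Q_i^{-1}(P_i-P_j)=Q_i^{-1}P_i-Q_i^{-1}P_j=P_1-P_k$ from the previous step. Crucially this is an identity of vectors in $\R^2$, so it yields $\Psi_{\ve\mu}(M_j^{-1}(P_i-P_j))=\Psi_{\ve\mu}(M_k^{-1}(P_1-P_k))$ term by term, \emph{without} invoking any radial structure of $\Psi_{\ve\mu}$ — the non-radial corrections $c_1z_1$, $c_2|z|^2$ and $H_{1\ve}$ in \eqref{defpsi2} are simply carried along.

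Finally I would note that $j\mapsto k(i,j)$ is a bijection of $\mathbb Z/N\mathbb Z$ onto itself, and $k(i,j)=1$ exactly when $j=i$; hence it restricts to a bijection of $\{1,\dots,N\}\setminus\{i\}$ onto $\{1,\dots,N\}\setminus\{1\}$. Summing the term-by-term identity over $j\neq i$ therefore gives
\[
\sum_{j\neq i}^N \Psi_{\ve\mu}(M_j^{-1}(P_i-P_j))=\sum_{k\neq 1}^N \Psi_{\ve\mu}(M_k^{-1}(P_1-P_k))=\sum_{j=2}^N \Psi_{\ve\mu}(M_j^{-1}(P_1-P_j)),
\]
which is \eqref{sumpsi1}. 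No step presents a real obstacle; the only care needed is the bookkeeping of indices modulo $N$ and the observation that the reduction is an identity of points rather than of distances, so that the non-radial part of $\Psi_{\ve\mu}$ plays no role.
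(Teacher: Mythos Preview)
Your proof is correct and follows essentially the same approach as the paper: both arguments establish that the \emph{vectors} $M_j^{-1}(P_i-P_j)$ and $M_k^{-1}(P_1-P_k)$ are literally equal for a suitable bijection of indices, so the equality of the $\Psi_{\ve\mu}$-sums follows term by term without any use of the internal structure of $\Psi_{\ve\mu}$. Your single shift bijection $k\equiv j-i+1\pmod N$ is a cleaner bookkeeping than the paper's, which introduces the reflected index $k=N-i+2$ and then splits the index range into two pieces via \eqref{Qidentity1}--\eqref{Qidentity2}; but the underlying identity $Q_{i_1}Q_{i_2}=Q_{i_1+i_2-1}$ used there is exactly your observation $Q_i^{-1}Q_j=Q_k$ in different notation.
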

	\begin{proof} If we consider for  $\theta_i$,  the corresponding symmetric angle $\theta_k,$ (with $k=N-i+2$), then 
		$Q_{k}^{-1}=Q_i$. Recalling that $M_j^{-1}=M^{-1}Q_j^{-1}$ and noting that $M_j^{-1}P_j=M_1^{-1}P_1$ for any $j$, we obtain
		$$
		\Psi_{ \ve \mu}(M_k^{-1}(P_1-P_k))= \Psi_{ \ve \mu}(M_1^{-1}(P_i-P_1))
		$$
		so the $j=k$ term  in the left sum in \eqref{sumpsi1} is equal to the first term in the sum on the right, moreover since  
		\begin{equation}\label{Qidentity1}
			Q_{k+m-1}^{-1}=Q_m^{-1}Q_i \quad m=1,\ldots,N-k+1=i-1
		\end{equation}
		then from the  $j=k$ term till $j=N$ term are equal to the corresponding $j=1$ term  till $j=i-1$ term.  In addition we have 
		\begin{equation}\label{Qidentity2}
			Q_{m+1}^{-1}=Q_{i+m}^{-1}Q_i\quad m=1,\ldots,N-i=k-2 
		\end{equation}
		with this from $j=2$ to $j=k-1$ the terms are equal to the $j=i+1$ till $j=N$ terms.  
		
		The identities \eqref{Qidentity1} and \eqref{Qidentity2} follow from
		$ Q_{i_1}Q_{i_2}=Q_{i_1+i_2-1} $
		which follows from direct calculations, and noting in \eqref{Qidentity1} that $Q_i=Q_k^{-1}$.
	\end{proof}
	
	\begin{prop}\label{sumerrorrot} For the rotation matrix $Q_i$ defined in \eqref{defQj}, we have 
		\[ 
		g(x')=g(x)\quad\mbox{where}\quad x'=Q_ix,\quad\mbox{for $i=1,\ldots,N$}
		\]
	\end{prop}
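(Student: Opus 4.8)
The plan is to read off the invariance from the explicit form
$$
g(x)= \eta_0 \sum_{j=1}^N E(M_j^{-1}(x-P_j)) + \sum_{j=1}^N \big[\, L(\eta_0\psi_j)-\eta_0 L(\psi_j)\,\big],
$$
using only three ingredients: (i) $\eta_0=\eta_0(|x|)$ is radial, so $\eta_0\circ Q_i=\eta_0$; (ii) $L$ is rotation invariant in the form \eqref{rotinv}, i.e. $L(f\circ Q_i)=(Lf)\circ Q_i$; and (iii) $\{Q_1,\dots,Q_N\}$ is the cyclic group generated by $Q_{2\pi/N}$. With these it suffices to show that each of the two sums is individually unchanged under $x\mapsto Q_ix$, which will follow from relabelling the summation index by a suitable permutation.

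First I would record the bookkeeping. For fixed $i$ define $\sigma_i:\{1,\dots,N\}\to\{1,\dots,N\}$ by $Q_j^{-1}Q_i=Q_{\sigma_i(j)}^{-1}$, equivalently $\theta_{\sigma_i(j)}\equiv\theta_j-\theta_i$ mod $2\pi$; this is a permutation (a cyclic shift) because of (iii). Since $\psi_j=\psi_1\circ Q_j^{-1}$ by \eqref{psi0}, we get $\psi_j\circ Q_i=\psi_1\circ(Q_j^{-1}Q_i)=\psi_1\circ Q_{\sigma_i(j)}^{-1}=\psi_{\sigma_i(j)}$, hence also $(\eta_0\psi_j)\circ Q_i=\eta_0\,\psi_{\sigma_i(j)}$. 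Applying \eqref{rotinv} then gives $L(\eta_0\psi_j)(Q_ix)=L(\eta_0\psi_{\sigma_i(j)})(x)$ and $L(\psi_j)(Q_ix)=L(\psi_{\sigma_i(j)})(x)$, while $\eta_0(Q_ix)=\eta_0(x)$; therefore the $j$-th bracket, evaluated at $Q_ix$, equals the $\sigma_i(j)$-th bracket evaluated at $x$, and summing over $j$ and relabelling by $\sigma_i$ shows that the second sum is $Q_i$-invariant.

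For the first sum I would use $M_j=Q_jM$ and $P_j=R_\ve Q_j(1,0)$, which give $M_j^{-1}=M^{-1}Q_j^{-1}$ and $M_j^{-1}P_j=M^{-1}R_\ve(1,0)=:p_0$, a vector independent of $j$ and equal to $M_\ell^{-1}P_\ell$ for every $\ell$. Then
$$
M_j^{-1}(Q_ix-P_j)=M^{-1}Q_j^{-1}Q_i\,x-p_0=M^{-1}Q_{\sigma_i(j)}^{-1}x-M_{\sigma_i(j)}^{-1}P_{\sigma_i(j)}=M_{\sigma_i(j)}^{-1}(x-P_{\sigma_i(j)}).
$$
Since $E$ is one and the same function in the local expansion \eqref{ee10} around each $P_j$ (all the $P_j$ share the common radius $R_\ve$), this yields $\sum_j E(M_j^{-1}(Q_ix-P_j))=\sum_\ell E(M_\ell^{-1}(x-P_\ell))$, and together with $\eta_0(Q_ix)=\eta_0(x)$ the first sum is $Q_i$-invariant as well. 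Adding the two contributions gives $g(Q_ix)=g(x)$ for every $i=1,\dots,N$. The argument is entirely elementary; the only point demanding a little care is that the same permutation $\sigma_i$ must simultaneously relabel the $E$-sum and the bracket-sum, which is exactly what the identities $Q_j^{-1}Q_i=Q_{\sigma_i(j)}^{-1}$ and $\psi_j\circ Q_i=\psi_{\sigma_i(j)}$ encode, so no real obstacle arises.
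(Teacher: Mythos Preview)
Your proof is correct and follows essentially the same approach as the paper's: split $g=g_1+g_2$ and show each piece is $Q_i$-invariant by relabelling the sum via the cyclic shift induced by $Q_i$. Your packaging with the single permutation $\sigma_i$ (defined by $Q_j^{-1}Q_i=Q_{\sigma_i(j)}^{-1}$) is cleaner than the paper's, which carries out the reindexing by breaking into the ranges $j<i$, $j=i$, $j>i$ and writing the explicit shift $j\mapsto N-i+1+j$ (respectively $j\mapsto j-i+1$); but the content is identical.
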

	\begin{proof} We write $g= g_1 + g_2$, where
		$$
		g_1(x)= \eta_0 (|x|) \sum_{j=1}^N \, 
		\,    E(M_j^{-1}(x-P_j)), \quad g_2 (x)=  
		\sum_{j=1}^N 
		\left[  \, L( \eta_{0} \psi_j ) - \eta_0 L(\psi_j)   \right].
		$$
		We first prove that
		$$
		\sum_{j=1}^N \,  E(M_j^{-1}(x-P_j))=\sum_{j=1}^N \,  E(M_j^{-1}(x'-P_j)). 
		$$
		In fact, we first  note that 
		$$
		E(M_i^{-1}(Q_ix-P_i))=E(M_{1}^{-1}(x-P_{1})). 
		$$
		Using a similar argument as in Proposition \ref{proppsi}, we have 
		\begin{align*}
			E(M_j^{-1}(Q_ix-P_j))&=E(M^{-1}Q_j^{-1}Q_ix-M_j^{-1}P_j)\\
			&=E(M^{-1}Q_{N-i+1+j}^{-1}x-M_{N-i+1+j}^{-1}P_{N-i+1+j}) \\
			&=E(M_{N-i+1+j}^{-1}(x-P_{N-i+1+j}))
		\end{align*}
		for $j=1,\ldots,i-1$. ($2\leq i\leq N$). Similarly
		$$
		E(M_j^{-1}(Q_ix-P_j))=E(M_{j-i+1}^{-1}(x-P_{j-i+1})) 
		$$
		for $j=i+1,\ldots,N$ ($1\leq i\leq N-1$). 
		Clearly $\eta_0(|x|)=\eta_0(|x'|)$, so $g_1(x)=g_1(x')$. Now by rotacional invariance of
		$L$ we have 
		\begin{align*}
			[ L( \eta_{0} \psi_j ) - \eta_0 L(\psi_j) ] (x) = [L (\eta_0 \tilde\psi_{j} ) -  \eta_0 L (\tilde \psi_{j})] (x'), \quad \mbox{with}\quad  \tilde \psi_{j}(x)=
			\psi_{j}(x')
		\end{align*}
		therefore $g_2(x)=g_2(x')$ and the proposition is proven.
	\end{proof}
	
	\begin{proof}[Proof of \eqref{tildeg}] \ \ 
		As above we can write $g= g_1 + g_2$ and 
		$$
		\tilde g_i (z) = g_i (P_1+ Mz), \quad i=1,2.
		$$
		Arguing as in the proof of Proposition \ref{even-ness}, we have that a function of the form
		$$
		z \to \sum_{j=1}^N f ( M_j^{-1} (x-P_j)), \quad x=P_1+Mz
		$$
		for a fixed profile $f$, is even in the $z_2$ variable. Hence
		$$
		z \to  \sum_{j=1}^N \, 
		\,    E(M_j^{-1}(x-P_j)) , \quad x=P_1 + Mz$$
		is even with respect to $z_2$. Since
		$|x|^2 = R^2 + 2 {R h \over \sqrt{h^2 + R^2}}  z_1 + {h^2 \over h^2 + R^2} z_1^2 + z_2^2 ,$ then also $z \to \eta_0 (|P_1 + Mz|)$ is even in the $z_2$ variable. This proves that $\tilde g_1 (z_1 , z_2) = \tilde g_1 (z_1, - z_2).$

		Consider now $\tilde g_2$, and define $\tilde \eta_0 (z) = \tilde \eta_0 (|P_j + M_j z|).$ Arguing as in Proposition \ref{24} and using the radial symmetry of $\eta_0$, we obtain that
		\begin{align*}
			[ L( \eta_{0} \psi_j ) - \eta_0 L(\psi_j) ] (x) = [L_0 (\tilde \eta_0 \Psi_{\ve \mu} ) - \tilde \eta_0 L_0 (\Psi_{\ve \mu})] (M_j^{-1} (x-P_j)).
		\end{align*}
		Using the same argument as before, we conclude that $\tilde g_2$ is also even with respect to $z_2$. This concludes the proof.
		
	\end{proof}

	\section{}\label{AppeB}

	This section is to prove Proposition \ref{prop1}. 
	For this purpose, we recall a result contained in Lemma 6.1 in \cite{ddmw2}. To state that result, we need to introduce some notations.
	For numbers $m>2$, $\beta \in (0,1)$ and functions $h :\R^2 \to \R$, with $\|h\|_{m}$ and $\|h \|_{m,\beta} $ we denote respectively
	\begin{align*}
		\|h \|_{m} &=  \sup_{y\in  \R^2 } | (1+|y|)^{m} h(y)|] ,\\  
		\|h \|_{m,\beta} &=  \sup_{y\in \R^2 } (1+|y|)^{m+\beta}[ h]_{B(y,1)}   +     \|h \|_{m}.
	\end{align*}

	\medskip
	We have
	
	\begin{lemma}\label{lemat}
		(Lemma 6.1 in \cite{ddmw2}). Given $m>2$ and $0<\beta < 1$,
		there exist a constant $C>0$ and a solution $\phi =  \TT [ h]$ of problem 
		$$
		\Delta \phi +e^\Gamma\phi+h(y)=0\quad \mbox{in}\quad \RR^2
		$$
		for each $h$ with $\|h\|_{m} <+\infty$ that defines a linear operator of $h$ and satisfies the estimate
		$$\begin{aligned}
			& (1+|y|) | D_y \phi (y)|  +  | \phi (y)| \\  &
			\,  \le  \,  C \big [ \,  \log (2+|y|) \,\big|\int_{\R^2} h Z_0\big|  +    (1+|y|) \sum_{j=1}^2 \big|\int_{\R^2} h Z_j\big| 
			+  (1+|y|)^{-(m-2)} \|h\|_{m}   \,\big ]. \end{aligned}
		$$
		In addition, if
		$\|h\|_{m,\beta} <+\infty$, we have
		$$\begin{aligned}
			& (1+|y|^{2+\beta})  [D^2_y \phi]_{B_1(y),\beta}  +(1+|y|^2)  |D^2_y \phi (y)| \\  &
			\,  \le  \,  C \big [ \,  \log (2+|y|) \,\big|\int_{\R^2} h Z_0\big|  +    (1+|y|) \sum_{j=1}^2 \big|\int_{\R^2} h Z_j\big| 
			+  (1+|y|)^{-(m-2)} \|h\|_{m,\beta}   \,\big ]. \end{aligned}
		$$
		
	\end{lemma}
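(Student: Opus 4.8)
The plan is to construct $\phi$ explicitly by separation of variables in the angular variable, solving the resulting decoupled family of ODEs by variation of parameters. Write $y=re^{i\theta}$ and expand $h(y)=\sum_{k\in\Z}h_k(r)e^{ik\theta}$; since $e^\Gamma=\tfrac{8}{(1+r^2)^2}$ is radial, $\Delta+e^\Gamma$ preserves angular modes, and a solution is obtained as $\phi=\sum_k\phi_k(r)e^{ik\theta}$ provided each $\phi_k$ solves
\[
\phi_k''+\frac1r\phi_k'+\Big(\frac{8}{(1+r^2)^2}-\frac{k^2}{r^2}\Big)\phi_k=-h_k(r),\qquad r>0,
\]
and is regular at $r=0$. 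For each $k$ I would fix a pair of linearly independent homogeneous solutions $v_k^{0}$ (regular at the origin) and $v_k^{\infty}$ (its dual): for $k=0$, $v_0^0=Z_0=\tfrac{2(1-r^2)}{1+r^2}$ and $v_0^\infty=w_0$, a second solution with a logarithmic singularity at $r=0$ and growing like $\log r$ at infinity; for $k=\pm1$, $v_1^0=\tfrac{r}{1+r^2}$ (the radial part of $Z_1$, vanishing like $r$ at $0$ and decaying like $r^{-1}$ at infinity) and $v_1^\infty$ a second solution growing linearly; and for $|k|\ge2$, the potential $\tfrac{8}{(1+r^2)^2}$ being an integrable perturbation of $-k^2/r^2$, the homogeneous solutions are asymptotic to $r^{|k|}$ and $r^{-|k|}$ at both ends. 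In each case the scaled Wronskian $rW(v_k^0,v_k^\infty)$ is a nonzero constant, of order $|k|$ for $|k|\ge1$, so the variation-of-parameters formula with integration limits anchored at $0$ and at $\infty$ yields an explicit $\phi_k=\TT_k[h_k]$, manifestly linear in $h_k$.

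Next I would read off the asymptotics and sum. For $|k|\ge2$ both defining integrals converge (using $|k|+m>2$ and, for regularity at $\infty$, the range $m<|k|+2$ relevant here), producing, for large $r$,
\[
|\phi_k(r)|+(1+r)|\phi_k'(r)|\ \lesssim\ \frac{1}{|k|^{2}}\,(1+r)^{-(m-2)}\,\|h_k\|_{m};
\]
since $\|h_k\|_{m}\le\|h\|_m$ uniformly in $k$, the $|k|^{-2}$ gain makes $\sum_{|k|\ge2}\phi_k e^{ik\theta}$ converge with the desired bound. For the low modes no orthogonality of $h$ against the kernel is assumed, so growth appears and must be tracked: the $k=0$ formula gives, as $r\to\infty$, $\phi_0(r)=c\,w_0(r)\big(\int_{\R^2}hZ_0\big)(1+o(1))+O\big((1+r)^{-(m-2)}\|h_0\|_{m}\big)$, hence $|\phi_0(r)|\lesssim\log(2+r)\,|\int_{\R^2}hZ_0|+(1+r)^{-(m-2)}\|h_0\|_{m}$; likewise the $k=\pm1$ pieces contribute at most $(1+r)\sum_{j=1}^2|\int_{\R^2}hZ_j|+(1+r)^{-(m-2)}\|h_j\|_{m}$, the coefficient of the linearly growing solution $v_1^\infty$ being precisely the pairing of $h$ with $Z_j$. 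Adding the three ranges gives the stated $C^0$ and $C^1$ weighted estimate, the displayed formulas for $d_0=d_0[h]$, $d_1=d_1[h]$ in the corresponding reduced statement (Lemma \ref{prop1}) being exactly these pairings up to the normalizing constants $\gamma_j^{-1}=\int_{\R^2}e^\Gamma Z_j^2$; linearity of $\TT:=h\mapsto\phi$ is inherited from the $\TT_k$.

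Finally, to pass from the pointwise bound on $\phi$ to the $C^{2,\beta}$ bound when $\|h\|_{m,\beta}<\infty$, I would not differentiate the series but invoke interior Schauder estimates: rewriting the equation as $\Delta\phi=-e^\Gamma\phi-h$, the right-hand side is controlled in $C^\beta(B_1(y_0))$ — for $|y_0|$ large after the rescaling $y=y_0+|y_0|\zeta$ to a unit ball — by the $C^0$ bound on $\phi$ already obtained (against the rapidly decaying weight $e^\Gamma$) and by $\|h\|_{m,\beta}$; Schauder then yields $(1+|y|^2)|D^2\phi|+(1+|y|^{2+\beta})[D^2\phi]_{B_1(y),\beta}$ bounded by the same combination $\log(2+|y|)\,|\int_{\R^2}hZ_0|+(1+|y|)\sum_j|\int_{\R^2}hZ_j|+(1+|y|)^{-(m-2)}\|h\|_{m,\beta}$, since the logarithmically and linearly growing parts are harmonic to leading order and so do not deteriorate the second-derivative decay. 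The parts I expect to be most delicate are: (i) the convergence of the variation-of-parameters integrals for $k=\pm1$ when $m$ is close to $2$, where $s\,v_1^\infty(s)h_1(s)=O(s^{2-m})$ is only borderline integrable at infinity and one must choose the integration limits with care and subtract the correct multiple of the linear homogeneous solution; and (ii) obtaining the constant uniform in $k$, which requires quantitative control of $v_k^0,v_k^\infty$ and their Wronskians as $|k|\to\infty$.
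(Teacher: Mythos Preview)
Your approach is essentially the same as the paper's: Fourier decompose in the angular variable, solve the modes $k=0$ and $k=\pm1$ by variation of parameters using the explicit kernel elements $z(r)=\tfrac{r^2-1}{1+r^2}$ and $z(r)=\tfrac{4r}{1+r^2}$ (which produces the logarithmic and linear growths tied to $\int hZ_0$ and $\int hZ_j$), then for the second-derivative H\"older bound rescale to a unit ball and apply interior Schauder estimates exactly as you describe.

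The one place where the paper differs from your outline is the treatment of $|k|\ge2$, and it happens to neutralize the concern you flag in (ii). Instead of building for each $k$ a pair $v_k^0,v_k^\infty$ and tracking their Wronskian as $|k|\to\infty$, the paper fixes the single homogeneous solution $z(r)$ of the $k=2$ equation (with $z\sim r^2$ at both ends) and observes that, since $-k^2/r^2\le-4/r^2$, the function
\[
\bar\phi_k(r)=\frac{4}{k^2}\,z(r)\int_0^r\frac{ds}{s\,z(s)^2}\int_0^s|h_k(\rho)|\,z(\rho)\,\rho\,d\rho
\]
is a positive supersolution for the $k$-th mode; the maximum principle then gives $|\phi_k(r)|\le \bar\phi_k(r)\le Ck^{-2}(1+r)^{2-m}\|h\|_m$ with a constant independent of $k$. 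This barrier argument is shorter than a quantitative two-solution analysis and makes the summability over $k$ immediate.
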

	
	This result is key for the following.
	\medskip
	
	\begin{proof}[Proof of Proposition \ref{prop1}]
		We consider a standard linear extension operator $h\mapsto \ttt h $ to entire $\R^2$,
		in such a way that the support of $\ttt h$ is contained in $B_{2\rho}$ and $\|\ttt h\|_{m,\beta} \le C\|h\|_{m,\beta, B_\rho}$ with $C$ independent of all large $\rho$. The operator $B_\ve$ is written in \eqref{By} and $b_\ve$ is defined in \eqref{defbe} and the coefficients are of class $C^1$ in entire $\R^2$ and  have compact support in $B_{2\rho}$. 
		Then we consider the auxiliary problem in entire space
		\be
		\Delta \phi  +  e^{\Gamma} \phi  + B_\ve[\phi] + b_\ve(y)\phi + \ttt h(y)  = \sum_{j=0}^2  d_{j} e^{\Gamma} Z_j \inn \R^2
		\label{001} \ee
		where, assuming that $\|h\|_{m}<+\infty$ and $\phi$ is of class $C^2$,
		$d_{j} = d_{j}[h,\phi]$ are the scalars so that
		\[
		d_j
		=\gamma_j \int_{\R^2} (B_\ve[\phi] +b_\ve(y)\phi + \ttt h(y))Z_j,  \quad \gamma_j^{-1} = \int_{\R^2}  e^{\Gamma} Z_j^2\quad \mbox{for}\quad j=0,1,2.
		\]
		Note that $b_\ve(y)$ is even in $y_2$, see \eqref{beven} and since $\phi$ is even in $y_2$, then $B_\ve[\phi]=\ve^2\mu^2B(\ve\mu y)[\phi]$ is also even in $y_2$ by the form of $B$ in \eqref{Bexpr}, therefore using \eqref{even}, we have
		\[
		d_2
		=\gamma_2 \int_{\R^2} (B_\ve[\phi] +b_\ve(y)\phi + \ttt h(y))Z_2 =0.
		\]
		On the other hand  
		\[
		\int_{\R^2} B_\ve[\phi] Z_j = O(\|\phi\|_{*,m-2,\beta}) \frac{\ve\mu}{\sqrt{|\log \ve|}},\quad 
		\int_{\R^2} b_\ve(y)\phi Z_j = O(\|\phi\|_{*,m-2,\beta}) \ve\mu\sqrt{|\log \ve|} 
		\]
		for $j=0,1$, where
		\[
		\|\phi\|_{*, m-2,\beta} =  \|  D^2_y\phi \|_{m,\beta}   + \|  D_y\phi \|_{m-1}+ \|\phi \|_{m-2}.
		\]
		Consequently, we get
		\[
		\int_{\R^2} [B_\ve[\phi]+b_\ve(y)\phi]Z_j  
		= O(\|\phi\|_{*,m-2,\beta}) \ve\mu \sqrt{|\log \ve|}=O(\|\phi\|_{*,m-2,\beta})\rho^{-1}.
		\]
		On the other hand
		\[
		\int_{\R^2\setminus B_\rho} h(y)Z_0 = O(\rho^{-(m-2)})\|h\|_{m,\beta,B_\rho},\:\:
		\int_{\R^2\setminus B_\rho} h(y)Z_1 = O(\rho^{-(m-1)})\|h\|_{m,\beta,B_\rho}
		\]
		In addition, we can compute that
		\[ 
		\| B_\ve[\phi] + b_\ve(y)\phi\|_{m,\beta}  \le  C \frac{\delta}{|\log\ve|}  \|\phi\|_{*,m-2,\beta},
		\]
		where the dominant term is given by $B_\ve[\phi]$.
		
		Let us consider the Banach space $X$ of all $C^{2,\beta}(\R^2)$
		functions with
		$
		\|\phi\|_{*, m-2,\beta} <+\infty.
		$
		We find a solution of \equ{001}
		if we solve the equation
		\be\label{fp}
		\phi  =   \mathcal A  [\phi]  +  \mathcal H ,\quad \phi \in X
		\ee
		where
		\[
		\mathcal A  [\phi]
		=  \mathcal T\Big [B_\ve[\phi]+ b_\ve(y)\phi -\sum_{j=0}^1 d_{j}[0, \phi] e^{\Gamma}Z_j  \Big] ,\quad
		\mathcal H  = \mathcal T\Big [  \ttt h   - \sum_{j=0}^1 d_{j}[\ttt h,0] e^{\Gamma}Z_j  \Big] .
		\]
		and $\mathcal T$ is the operator built in Lemma  \ref{lemat}.
		We observe that
		\[
		\|\mathcal A  [\phi] \|_{*, m-2,\beta} \le C\frac{\delta}{|\log\ve|}  \|\phi\| _{*, m-2,\beta}, \quad \|\mathcal H \|_{*, m-2,\beta} \le C \|h \|_{ m,\beta, B_\rho}.
		\]
		So we find that Equation \equ{fp} has a unique solution, that defines a linear operator of $h$, and satisfies
		\[
		\|\phi \|_{*, m-2,\beta} \ \le\  C \|h \|_{m,\beta, B_\rho}
		\]
		The result of the proposition follows by just setting $T[h] = \phi\big|_{B_\rho}$. The proof is concluded. 
		
	\end{proof}


\end{document}